\documentclass{amsart}

\usepackage{booktabs}
\usepackage[utf8]{inputenc}
\usepackage[colorlinks=true, linkcolor=black, citecolor=black, filecolor=black, urlcolor=black, pdfborder={0 0 0}, linktoc=all]{hyperref}
\usepackage{amsmath, amsthm, amssymb, mathtools, empheq, cite, enumitem}
\usepackage{todonotes, diagbox, caption, graphicx, pdfpages, pgf, pgfplots, tikz, pgfplots, tikz-cd,pgfmath, mathpazo, ifthen, color}
\usepackage[x11names, table]{xcolor}
\usetikzlibrary{calc, math, intersections, arrows, patterns}
\pgfplotsset{compat=1.8}
\usepackage[framemethod=TikZ]{mdframed}
\usepgfplotslibrary{groupplots}
\makeatletter

\usepackage{subcaption}
\usepackage[capitalise,noabbrev,nameinlink]{cleveref}
\usepackage{csquotes}

\makeatletter
\renewcommand{\p@subfigure}{\thefigure\space(}
\crefformat{subfigure}{#2Figure~#1)#3}
\Crefformat{subfigure}{#2Figure~#1)#3}
\makeatother

\newtheorem{theorem}{Theorem}[section]
\newtheorem{corollary}[theorem]{Corollary}
\newtheorem{proposition}[theorem]{Proposition}
\newtheorem{lemma}[theorem]{Lemma}
\newtheorem{conjecture}[theorem]{Conjecture}
\newtheorem{convention}[theorem]{Convention}

\theoremstyle{definition}
\newtheorem{definition}[theorem]{Definition}
\newtheorem{example}[theorem]{Example}

\newtheorem{remark}[theorem]{Remark}

\newtheorem{observation}[theorem]{Observation}
\newtheorem*{repcorollary}{Corollary \ref{cor_shellable}}

\title[M. Müller]{A combinatorial model for the canonical join complex of alt \texorpdfstring{$\nu$-}-Tamari lattices}
\author[M.~M\"uller]{Matthias M\"uller}
\address[M.~M\"uller]{Institute of Geometry, University of Technology Graz, Austria}
\email{matthias.mueller@tugraz.at}

\newcommand{\defn}[1]{{\color{green!50!black}\textbf{\emph{#1}}}}

\newcommand{\Tam}[1]{\operatorname{Tam}({#1})}
\newcommand{\altTam}[2]{\operatorname{Tam}_{#1}(#2)}
\newcommand{\altDyck}[2]{\operatorname{Dyck}_{#1}(#2)}
\newcommand{\Dyck}[1]{\operatorname{Dyck}({#1})}

\begin{document}

\begin{abstract}
Alt $\nu$-Tamari lattices constitute a remarkable family of lattices associated with lattice paths that broadly generalize the Dyck and Tamari lattices. To systematically study the structural properties of this family, we introduce a combinatorial model that realizes the canonical join complex of alt $\nu$-Tamari lattices. Serving as a universal tool, this model allows us to prove vertex decomposability, establish an explicit shelling order, and reveal the underlying homology of the canonical join complex of alt $\nu$-Tamari lattices.

%Alt $\nu$-Tamari lattices, introduced by Ceballos and Chenevière, constitute a remarkable family of lattices associated with lattice paths that broadly generalize the Dyck and Tamari lattices. To systematically study the structural properties of this family, we introduce a combinatorial model that realizes the canonical join complex of alt $\nu$-Tamari lattices. Serving as a universal tool, this model allows us to prove vertex decomposability, establish an explicit shelling order, and reveal the underlying homology of the canonical join complex of alt $\nu$-Tamari lattices.

%Finally, we use it to reveal the underlying homology of the canonical join complex of alt $\nu$-Tamari lattices.

%Alt $\nu$-Tamari lattices, introduced by Ceballos and Chenevière, constitute a remarkable family of lattices associated with lattice paths that broadly generalize the Dyck and Tamari lattices. To systematically study the structural properties of this family, we introduce a combinatorial model centered around the canonical join complex, introduced by Reading and Barnard as the simplicial complex whose faces correspond to unique "lowest" join representations of lattice elements. Our model bridges the gap between these lattices and its canonical join complex. This connection makes our model a universal tool for studying this family. Finally, we use it to reveal the underlying homology of the canonical join complex of alt $\nu$-Tamari lattices.

\end{abstract}

\maketitle
\tableofcontents
\section{Introduction}
For a fixed positive integer $n$, the classical Dyck and Tamari lattice \cite{Tamari51, Sanley1975} are partial orders on Dyck paths of semilength $n$. The former is also frequently referred to as the Stanley lattice. Equivalently, a Dyck path may be viewed as a lattice path from $(0,0)$ to $(2n,0)$ using unit east $E$ and north $N$ steps staying above the main diagonal. Given a fixed northeast path $\nu$, Préville-Ratelle and Viennot introduced the \defn{$\nu$-Tamari lattice}, along with the associated \defn{$\nu$-Dyck lattice}, as natural generalizations to the set of lattice paths lying weakly above $\nu$, see \cite{preville_vTamari_2017}.
The choice $\nu=(NE)^n$ recovers the classical case, while $\nu=(NE^m)^n$ yields the \defn{$m$-Tamari} lattice, introduced by Bergeron and Préville-Ratelle \cite{Bergeron2012} to state conjectural combinatorial interpretations for the dimensions of certain spaces arising in the study of trivariate diagonal harmonics.
A further generalization of the $\nu$-Tamari lattice, known as the \defn{alt~$\nu$-Tamari} lattice $\altTam{\nu}{\delta}$, was recently introduced by Ceballos and Chenevière in \cite{CCh2024}. This is a family of lattices, depending on an additional parameter $\delta$. Depending on a particular choice of $\delta$, the alt $\nu$-Tamari lattice is the $\nu$-Tamari and~$\nu$-Dyck lattice. Moreover, the authors establish that all alt $\nu$-Tamari lattices share the same number of linear intervals, revealing a first invariance across this entire family.
%***************************************

To further investigate the structural properties of alt $\nu$-Tamari lattices, we utilize a lattice-theoretic factorization known as the \defn{canonical join representation}. In a finite join-semidistributive lattice $L$, every element admits such a representation, specifically in the case of alt $\nu$-Tamari lattices. The \defn{canonical join complex} of $L$, is the simplicial complex, whose faces are canonical join representations of elements of $L$. 
This approach is rooted in the theory of lattice congruences of the weak order on the symmetric group, pioneered by Reading \cite{Reading2004, reading2015}. In \cite{reading2015}, Reading provided an elegant combinatorial model for the canonical join representation of permutations in terms of non-crossing arc diagrams.
Subsequently, Barnard examined the canonical join complex of the classical Tamari lattice, utilizing the induced complex of right-noncrossing arc diagrams as a combinatorial model for the canonical join complex \cite{barnard2019, Barnard2020}.

%***********************************************************
In Section~\ref{section::boxcomplex}, we introduce the first combinatorial model for the canonical join complex of alt $\nu$-Tamari lattices, which we call the \defn{box complex}. The first part of our work is to establish this combinatorial realization.

%***********************************************************

\begin{theorem}\label{realization}
    The box complex realizes the canonical join complex of the alt~$\nu$-Tamari~lattice.
\end{theorem}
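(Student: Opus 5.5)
The plan is to build an explicit isomorphism of simplicial complexes between the box complex and the canonical join complex of $\altTam{\nu}{\delta}$. It is assembled from three ingredients: a bijection on vertices, a description of canonical join representations, and a matching of faces.

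\textbf{Vertices.} The vertices of the canonical join complex are the join-irreducible elements of $\altTam{\nu}{\delta}$, that is, the paths covering exactly one element. First, I characterize these paths using the cover relations of the alt $\nu$-Tamari lattice. A cover comes from moving a valley $EN$ together with the $\delta$-determined excursion that follows it. So a path is join-irreducible exactly when it has a single ``descent'' at which such a move can be undone. Such a path is determined by a bounded amount of data: the position of the valley and the length of the excursion. This data should correspond bijectively to a box in the region between $\nu$ and the top path. I will take this correspondence as the vertex map $j \mapsto b(j)$.

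\textbf{Canonical join representations.} Alt $\nu$-Tamari lattices are semidistributive, so every element has a canonical join representation. For a path $\mu$, the canonical joinands are in bijection with the lower covers $\mu' \lessdot \mu$. For each such cover, the joinand is the unique minimal element $j$ with $j \le \mu$ and $j \not\le \mu'$, in the sense of Barnard and Reading. For each valley move undone in $\mu$, I will construct this joinand explicitly as the smallest path lying below $\mu$ that still contains the relevant excursion. I then verify minimality by a direct comparison of heights using the $\delta$-bracket vectors. This produces, for each $\mu$, a set of boxes $B(\mu)=\{b(j): j \text{ canonical joinand of } \mu\}$.

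\textbf{Faces.} It remains to show that the sets $B(\mu)$ are exactly the faces of the box complex, which is defined by pairwise compatibility of boxes. There are two inclusions.
\begin{itemize}
\item The boxes in $B(\mu)$ must be pairwise compatible. This is a local check on two excursions of a single path.
\item Every compatible family must arise as some $B(\mu)$. Here I rely on the fact that the canonical join complex is flag, which holds for semidistributive lattices by Barnard. It then suffices to realize compatible pairs, except that the flag property only reduces the problem to edges on the lattice side; I still need the box complex to be flag as well.
\end{itemize}
I expect the main obstacle to be this second inclusion, namely exhibiting $\mu$ with $B(\mu)$ equal to a prescribed compatible family. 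My first approach is to take $\mu = \bigvee_{b\in F} j_b$ and show that no joinand is absorbed. To do so, I will show that each $j_b$ still creates its own lower cover of $\mu$, using the compatibility conditions to rule out interference between neighbouring excursions. If computing the join directly proves too messy, I will instead build $\mu$ greedily, inserting the excursions from right to left.
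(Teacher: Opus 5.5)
Your outline has the right overall architecture, and it matches the paper's. You identify join-irreducibles with boxes, read off canonical joinands as labels of lower covers, prove two inclusions, and use a greedy right-to-left construction for the hard inclusion, which is essentially what the paper does with $(\delta,\nu)$-trees. However, there are two genuine gaps. The first concerns the target shape. You send each join-irreducible to a box of the region between $\nu$ and the top path, i.e.\ of $F_\nu$, and then test compatibility of the boxes $B(\mu)$ there. The box complex in the theorem instead lives on $F_{\delta,\nu}$, the part of $F_{\check\nu}$ weakly above $\hat\nu$. Compatibility, in particular whether a bounding rectangle lies inside the shape, depends on this $\delta$-dependent geometry. $F_\nu$ is the right shape for $\delta=\nu$ but not in general. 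For $\nu=(NE)^4$ and $\delta=0$ (the Dyck lattice), the region between $\nu$ and the top path is $F_{(3,2,1)}$, whose box complex is contractible. The canonical join complex, however, is $\Delta_{(1,2,3)}\simeq S^0\vee S^1$; for this distributive lattice it is the complex of antichains of boxes. Nothing in your plan says where $\delta$ enters the compatibility relation. So as written it would identify the canonical join complex of every $\operatorname{Tam}_\nu(\delta)$ with the box complex of $F_\nu$, which is false. The paper handles this by working with $(\delta,\nu)$-trees, whose nodes lie in $F_{\delta,\nu}$. A join-irreducible tree has its unique descent at the top-right corner of a box of $F_{\delta,\nu}$ and a node at that box's bottom-right corner.

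The second gap concerns the label of a cover. Everything depends on knowing which single box corresponds to the canonical joinand of a cover $\mu'\lessdot\mu$. This is the paper's key lemma: the label of a rotation $T\lessdot T'$ is the join-irreducible of the bottom-right box of the rectangle swept by the rotation. It is proved by verifying $T\wedge j=j_\downarrow$ and $T'\wedge j=j$ via componentwise minima of bracket vectors in the $\nu$-Tamari lattice, and then passing to the interval $\operatorname{Tam}_\nu(\delta)$. Your substitute, ``the smallest path lying below $\mu$ that still contains the relevant excursion'', is neither well defined nor correct. Take the Tamari lattice ($\nu=(NE)^4$, $\delta=\nu$). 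Here $\mu=NNNENEEE$ covers $\mu'=NENNENEE$, and $\mu$ arises by moving the second step $E$ of $\mu'$ past the excursion $NNENEE$ that follows it. The paths below $\mu$ containing $NNENEE$ as a factor are $\mu$, $\mu'$ and $NNENEENE$. They have no least element, and one of them is $\mu'$ itself. The actual joinand is the atom $NNEENENE$: the bottom path plus only the lowest box swept by the move, and it does not contain the excursion at all.

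Without a correct local description of the label, your two inclusions (``a local check on two excursions'', ``no joinand is absorbed'') have nothing definite to check. With it, they become the paper's arguments. Two incompatible label boxes force two tree nodes whose bounding rectangle lies inside $F_{\delta,\nu}$, a contradiction. A compatible family is realized by placing nodes from right to left and bottom to top. As a side remark, the box complex is flag by definition, being the clique complex of the compatibility relation. Together with Barnard's flagness theorem, you would therefore only need to realize compatible pairs.
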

Building on this combinatorial realization, our second main result establishes \defn{vertex decomposability} of this complex. This resolves a conjecture posed by Mühle \cite[Conjecture 3.12]{Muehle2021}, extending his statement from the specialized subclass of $\alpha$-Tamari lattices to the general case of alt $\nu$-Tamari lattices.
\begin{theorem}\label{vertex_decomposable}
The canonical join complex of alt $\nu$-Tamari lattices is vertex decomposable.
\end{theorem}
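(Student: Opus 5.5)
Since the box complex realizes the canonical join complex (Theorem~\ref{realization}), I will prove vertex decomposability for the box complex directly, by induction on the number of vertices (equivalently, on the size of the pair $(\nu,\delta)$). Recall that a simplicial complex $\Delta$ is vertex decomposable if it is a simplex (or empty), or if there is a vertex $v$, a \emph{shedding vertex}, such that both $\operatorname{lk}_\Delta(v)$ and $\operatorname{del}_\Delta(v)$ are vertex decomposable and no facet of $\operatorname{lk}_\Delta(v)$ is a facet of $\operatorname{del}_\Delta(v)$. The vertices of the box complex are boxes (join-irreducibles) in the region between $\nu$ and the top boundary, with compatibility governed by $\delta$. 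The plan is to enlarge the class under induction to ``box complexes of admissible regions'': subcomplexes induced on sets of boxes that arise as links and deletions of earlier steps. This class should be closed under the operations used below.

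The first step is to choose a canonical shedding vertex. I would take an extremal box, say the box in the last row of $\nu$ that is leftmost (or bottommost) among the remaining boxes. For such a box $b$, I expect the set of boxes compatible with $b$ to be describable as a smaller admissible region: the boxes that do not ``cross'' $b$ with respect to the $\delta$-rule. Consequently $\operatorname{lk}(v)$ is again a box complex of a smaller region, typically obtained by cutting $\nu$ at $b$ into two independent pieces, in which case it is a join of two such complexes. Joins of vertex decomposable complexes are vertex decomposable. The deletion $\operatorname{del}(v)$ is the box complex of the region with $b$ removed, which is again admissible. Both therefore fall under the induction hypothesis.

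The key verification, and the step I expect to be the main obstacle, is the facet condition: for every facet $F$ of $\operatorname{lk}(b)$, there must be a box $b'\neq b$, not in $F$, such that $F\cup\{b'\}$ is a face. Equivalently, every maximal face containing $b$ has an exchange partner. My approach is to construct $b'$ explicitly. Take the box adjacent to $b$ (its neighbor in the same row, or the box obtained by shifting $b$ along the $\delta$-direction) and show that it is compatible with every box compatible with $b$. This compatibility argument is where the $\delta$-dependence enters, and a case split on whether the relevant step of $\delta$ is $N$ or $E$ will likely be needed.

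If a direct neighbor fails in some configuration, the fallback is to choose the shedding order more cleverly. One option is to shed the boxes of an entire row or column in sequence, so that each deletion leaves a region on which the exchange argument works. The other is to use the fact that the facets of the canonical join complex correspond to elements whose number of canonical joinands is maximal, and to use the lattice structure to produce $b'$.

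The base cases are regions with a single box, or with pairwise compatible boxes, where the complex is a simplex.

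Once the above is established, Theorem~\ref{realization} transfers vertex decomposability to the canonical join complex of $\altTam{\nu}{\delta}$.
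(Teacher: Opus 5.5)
Your proposal has a genuine gap at exactly the step you flag: it never specifies a shedding vertex that works, and the canonical choice you suggest already fails in the smallest nontrivial case.

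Take $u=(1,2)$, the Dyck lattice on five elements. Call the box in column $1$ $a$, and call the top and bottom boxes of column $2$ $b$ and $c$. The complex $\Delta_{(1,2)}$ is the edge $\{a,c\}$ together with the isolated vertex $b$.
\begin{itemize}
\item The bottom-row box $c$ is not shedding. Its link is $\ell k(c)=\{\emptyset,\{a\}\}$, and $\{a\}$ is also a facet of the deletion of $c$, because $a$ and $b$ share a row.
\item The box $a$ is not shedding either, since $\{c\}$ is a facet of both its link and its deletion.
\item The only shedding vertex is $b$.
\end{itemize}
So the extremal box in the last row is the wrong vertex to shed. Likewise, an adjacent box is not in general compatible with everything compatible with the shed box: here the neighbour $b$ of $c$ is incompatible with $a$.

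There is a second problem. Inducting on the number of vertices through single-box deletions leaves the class of top-aligned unimodal shapes, and your class of ``admissible regions'' is never defined. You give no argument that it is closed under links and deletions, or that each member has a shedding vertex with an exchange partner. Induced subcomplexes of vertex decomposable complexes need not be vertex decomposable, so this closure is the whole difficulty, not bookkeeping.

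The missing idea is to use the extremal box as a universal exchange partner rather than as the shedding vertex. Let $q_1$ be the rightmost box of the bottom row.
\begin{itemize}
\item By top-alignment and unimodality, $q_1$ is compatible with every box outside its own row and column. Boxes northeast of it lie in strictly shorter columns, northwest pairs are always compatible, and nothing lies below $q_1$.
\item The boxes in the row and column of $q_1$ are pairwise incompatible. Hence deleting some of them never changes the link of another one.
\item That link is $\Delta_{u^+}\ast\Delta_{u^-}$, a join of box complexes with fewer columns. Every facet $F$ of it extends to the face $F\cup\{q_1\}$ of the deletion, which gives the facet condition.
\item Shedding these boxes one after another ends with the cone $\Delta_{u'}\ast\{q_1\}$, where $u'$ is obtained by removing the row and column of $q_1$.
\end{itemize}
This keeps the induction, on the number of columns, inside unimodal box complexes, so no intermediate deletion needs a special form. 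In the example above, this recipe sheds exactly $b$.
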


As a consequence, we obtain  Corollary~\ref{cor_shellable} \cite[Theorem 11.3]{Anders1997}.
\begin{corollary}\label{cor_shellable}
     The canonical join complex of the $\text{alt}$ $\nu$-Tamari lattice is shellable.
\end{corollary}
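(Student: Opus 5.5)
The corollary follows immediately from Theorem~\ref{vertex_decomposable} together with the general implication, due to Björner, that vertex decomposable complexes are shellable \cite[Theorem 11.3]{Anders1997}. The plan is to make this implication explicit and check that its hypotheses hold for the canonical join complex of $\altTam{\nu}{\delta}$.

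Recall that a simplicial complex $\Delta$ is \emph{vertex decomposable} if it is pure and either $\Delta$ is a simplex (including $\emptyset$ and $\{\emptyset\}$), or there is a shedding vertex $v$ with two properties: $\operatorname{del}_\Delta(v)$ and $\operatorname{lk}_\Delta(v)$ are vertex decomposable, and no facet of $\operatorname{lk}_\Delta(v)$ is a facet of $\operatorname{del}_\Delta(v)$.

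The proof that vertex decomposability implies shellability goes by induction on the number of vertices. Simplices are trivially shellable. Otherwise, by induction we may take shellings $F_1,\dots,F_r$ of $\operatorname{del}_\Delta(v)$ and $G_1,\dots,G_s$ of $\operatorname{lk}_\Delta(v)$. We then claim that
\[
F_1,\dots,F_r,\; G_1\cup\{v\},\dots,G_s\cup\{v\}
\]
is a shelling of $\Delta$. The shedding condition ensures two things. First, the $F_i$ are exactly the facets of $\Delta$ not containing $v$, so purity and dimension are preserved. Second, each $G_j\cup\{v\}$ meets the earlier facets in a pure codimension-one subcomplex: the intersection with the $F_i$ contributes $G_j$ itself, since $G_j$ lies in $\operatorname{del}_\Delta(v)$, while the intersection with the earlier cones $G_k\cup\{v\}$ is the cone over the shelling intersection $\langle G_j\rangle\cap\langle G_1,\dots,G_{j-1}\rangle$ in the link. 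Both pieces are pure of dimension $\dim G_j$, so their union is too.

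Applying this to the canonical join complex, which by Theorem~\ref{realization} we may replace with the box complex, gives the corollary. If one wants an explicit shelling, unwind the recursion: use the shedding order from the proof of Theorem~\ref{vertex_decomposable}, list the facets avoiding the first shedding vertex first, then the remaining facets, and recurse within each block. There is no real obstacle, since purity is already part of the definition of vertex decomposability. The only point to verify is that the notion used in Theorem~\ref{vertex_decomposable} is the pure version of \cite{Anders1997}; if the non-pure version were meant, the same argument would still yield non-pure shellability in the sense of Björner--Wachs.
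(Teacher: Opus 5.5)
Your route is the paper's. The corollary is Theorem~\ref{vertex_decomposable} combined with \cite[Theorem 11.3]{Anders1997}. The concatenated order you check (a shelling of the deletion, then the link's shelling coned with $v$) is exactly the one the paper quotes from \cite[Lemma 6]{Wachs1999}.

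The purity framing, however, is wrong and should be removed rather than hedged. The canonical join complex is in general \emph{not} pure, as Example~\ref{example_homotopy} shows. For $\Tam{4}$, the complex $\Delta_{(3,2,1)}$ has a $2$-simplex together with three edges as facets. For $\Dyck{4}$, the complex $\Delta_{(1,2,3)}$ has an isolated vertex next to a filled triangle. Under the definition you recall (``pure and \dots''), Theorem~\ref{vertex_decomposable} would therefore be false.

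The notion in Definition~\ref{vd} is the non-pure one of Bj\"orner--Wachs, and that is precisely what \cite{Anders1997} introduces; the pure notion is Provan--Billera's \cite{Provan1980}. Condition~(ii) is there exactly because purity is not assumed. Correspondingly, the corollary asserts shellability in the non-pure sense defined in Section~\ref{section::shellability}.

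Your argument survives once you delete ``so purity and dimension are preserved'' and ``purity is already part of the definition''. The shedding condition is what makes the facets of $\Delta\setminus\{v\}$ facets of $\Delta$. Your check that each $G_j\cup\{v\}$ meets the earlier facets in a complex pure of dimension $\dim G_j$ is local to $G_j$ and never uses global purity.
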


As another application of the box complex, we compute the \defn{Euler characteristic} of the canonical join complex of alt $\nu$-Tamari lattices. We show that this value is an invariant of all alt $\nu$-Tamari lattices for fixed $\nu$. Specifically, for a fixed path $\nu$, the Euler characteristic is determined by the Narayana polynomial $N_\nu$, see Definition~\ref{definition_nara_poly}.
\begin{proposition}\label{theorem_reciprocity}
Let $\nu$ be a northeast path and increment vector $\delta$. The Euler characteristic of the canonical join complex of any alt $\nu$-Tamari lattice $\altTam{\nu}{\delta}$ is given by:
\[
\chi(\Delta_{\mathrm{Tam}_\nu(\delta)}) = 1 - N_\nu(-1).
\]
In particular, the Euler characteristic is independent of the increment vector $\delta$.
\end{proposition}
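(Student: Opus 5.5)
The computation reduces to counting faces by size. In a finite join-semidistributive lattice $L$, every element has a unique canonical join representation, and the faces of the canonical join complex $\Delta_L$ are exactly these representations, with the empty face corresponding to the bottom element $\hat 0$. The number of canonical joinands of an element $x$ equals the number of elements covered by $x$ (its lower covers). So a face with $k$ vertices is the same thing as an element with exactly $k$ lower covers. The reduced Euler characteristic is therefore
\[
\tilde\chi(\Delta_L)=\sum_{x\in L}(-1)^{\mathrm{dc}(x)-1}=-\sum_{x\in L}(-1)^{\mathrm{dc}(x)},
\]
where $\mathrm{dc}(x)$ is the number of lower covers of $x$. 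Since $\chi=\tilde\chi+1$, we get $\chi(\Delta_L)=1-\sum_{x}(-1)^{\mathrm{dc}(x)}$. With the Narayana polynomial convention $N_\nu(t)=\sum_x t^{\mathrm{dc}(x)}$ (or the up-degree version, to be matched against Definition~\ref{definition_nara_poly}), this is exactly $1-N_\nu(-1)$.

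The proof therefore has two steps.

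\textbf{Step 1: the reduction above for $L=\altTam{\nu}{\delta}$.} This uses Theorem~\ref{realization}, or more directly the fact that alt $\nu$-Tamari lattices are semidistributive, as the introduction notes. The face counts of the box complex are then identified with the distribution of lower covers.

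\textbf{Step 2 (the main obstacle): independence of $\delta$.} We must show that the number of elements of $\altTam{\nu}{\delta}$ with $k$ lower covers equals the $\nu$-Narayana number $N_{\nu,k}$, for every $\delta$. Equivalently, the faces of the box complex with $k$ vertices are equinumerous for all $\delta$.

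I would attack Step 2 in one of two ways.

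The first is to use the description of lower covers (or canonical joinands) of a path $\mu$ in $\altTam{\nu}{\delta}$. I expect these to correspond to certain valleys of $\mu$, each valley producing exactly one cover relation, following Ceballos--Chenevière's rotation description. If so, $\mathrm{dc}(\mu)$ is the number of valleys of $\mu$ (or of peaks, under the dual convention), which is a statistic on paths weakly above $\nu$ and does not depend on $\delta$. Summing gives the $\nu$-Narayana polynomial. The work lies in checking that the $\delta$-dependent rotation is defined at \emph{every} valley (or every peak except an extremal one). The danger is that the number of down-covers might count valleys while the up-cover count is $\delta$-dependent; the Narayana convention must then be the one matching down-degree.

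The second way, if the valley description fails for some $\delta$, is to find a face-size-preserving bijection on the box complex between $\delta$ and $\delta'$ differing in one coordinate. Comparing with $\delta=0$, where the lattice is the $\nu$-Dyck lattice, then finishes the proof.

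The final evaluation at $t=-1$ is immediate once the generating function is identified.
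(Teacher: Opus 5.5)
Your Step 1 is fine and is the same bookkeeping the paper does: faces with $k$ vertices are elements with $k$ lower covers, so $\chi=1-\sum_x(-1)^{\mathrm{dc}(x)}$. The gap is in Step 2, and your primary route rests on a false identification.

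The Ceballos--Chenevi\`ere $\delta$-rotation at a valley $EN$ of $\mu$ moves the path \emph{up}. Each valley yields exactly one \emph{upper} cover, and every upper cover arises this way. So it is the up-degree that equals the number of valleys; this is Proposition~\ref{Prop_cc_nar}, and it is what Definition~\ref{definition_nara_poly} counts. The down-degree of an individual path is neither its number of valleys nor its number of peaks, and it genuinely depends on $\delta$:
\begin{itemize}
\item The bottom element $\nu$ has no lower covers, yet it has a valley at every $EN$-corner of $\nu$.
\item For $\nu=(NE)^n$, the top path $N^nE^n$ has one peak and no valleys. It has $1$ lower cover in the Dyck lattice but $n-1$ lower covers in the Tamari lattice.
\end{itemize}
So the ``danger'' you flag is real but reversed: the up-degree is the $\delta$-free statistic, the down-degree is not. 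No per-path argument identifying $\mathrm{dc}$ with a fixed path statistic can succeed.

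The missing ingredient is a bridge between the two degree statistics. In any finite semidistributive lattice, the number of elements with $k$ lower covers equals the number with $k$ upper covers \cite[Proposition 9 and Corollary 5]{barnard2019}. Combined with up-degree $=$ number of valleys, this gives $f_{k-1}=\mathrm{Nar}_\nu(k)$, which is Corollary~\ref{Lemma_f_vector}. Evaluating at $-1$ then finishes, and this is exactly the paper's proof.

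Your fallback does not close the gap either, for two reasons:
\begin{itemize}
\item You give no construction of a face-size-preserving bijection between box complexes for different $\delta$. Such a bijection cannot in general be a simplicial isomorphism, by Example~\ref{example_homotopy}.
\item Even at $\delta=0$ you would still have to match down-degrees in the $\nu$-Dyck lattice with valleys, which is the same up/down equidistribution. There it is elementary, because the lattice is distributive: for order ideals $I$ of the underlying poset $P$, both $I\mapsto\max I$ and $I\mapsto\min(P\setminus I)$ are bijections onto antichains. But it must be said.
\end{itemize}
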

By Corollary~\ref{cor_shellable}, the canonical join complex of the alt $\nu$-Tamari lattice is homotopy equivalent to a wedge of spheres of varying dimensions. 
In Section~\ref{section::shellability}, we show that for a fixed path $\nu=NE^{\nu_1}\dots NE^{\nu_n}$, the number of $(n-2)$-dimensional spheres is the same for all alt~$\nu$-Tamari lattices (\Cref{thm_hf_invariance}). Moreover, we provide a counting formulation for the case where $\nu_i \geq 2$ for all $i$.
%In Section~\ref{section::shellability}, we show for a fixed path $\nu=NE^{\nu_1}\dots NE^{\nu_n}$, the number of $(n-2)$-dimensional spheres is the same for all alt~$\nu$-Tamari lattices (\cref{thm_hf_invariance}). Moreover, we provide a counting formulation for the case where $\nu_i \geq 2$ for all $i$. 
This is made precise in \Cref{theorem_wedge}. To this end, we identify the northeast path $\nu=NE^{\nu_1}\dots NE^{\nu_n}$ with the sequence~$\nu=(\nu_1,\dots,\nu_{n})$ and define the \defn{shrunken path} $\bar{\nu}$ as the path corresponding to the sequence $\bar{\nu}=(\nu_1-2,\dots,\nu_n-2)$, see Definition~\ref{def_shrunked}.

\begin{theorem}\label{theorem_wedge}
 Let $\nu $ be a finite northeast path with $\nu_i \geq 2$. The canonical join complex of all alt $\nu$-Tamari lattices is homotopy equivalent to a wedge of spheres with top dimension~$(n-2)$ and the number of~$(n-2)$-spheres is given by the number of $\bar{\nu}$-Dyck paths.
 In particular, the number of top-dimensional spheres is independent of the increment vector~$\delta$. 
 %In particular, this number is independent of $\delta$.
    %The canonical join complexes of the alt $\nu$-Tamari lattice $\altTam{\nu}{\delta}$ is homotopy equivalent to a wedge of finitely many spheres. Moreover, for all alt $\nu$-Tamari lattices, with $\nu_i\geq 2$ the number of~$n$~-~dimensional spheres is independent of $\delta$ and given by the Fuss-Catalan number$$ \frac{1}{(m-2)n+1} \binom{(m-1)n}{n}. $$
\end{theorem}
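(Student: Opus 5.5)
The plan is to derive the statement from the shelling of the box complex, counting its homology facets. By Theorem~\ref{realization} I may work with the box complex in place of the canonical join complex. By Corollary~\ref{cor_shellable} it is shellable, so it is homotopy equivalent to a wedge of spheres. For each dimension $d$, the number of $d$-spheres equals the number of $d$-dimensional \emph{homology facets}: facets $F$ whose restriction $\mathcal{R}(F)$ in the explicit shelling order (from Section~\ref{section::shellability}) is all of $F$. So I would proceed in three steps.

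\emph{First}, I determine the dimension bound. Canonical join representations of $\altTam{\nu}{\delta}$ have at most $n-1$ joinands. Each canonical joinand corresponds to a box (a descent) attached to one of the $n-1$ nontrivial rows or columns, and distinct joinands in a face occupy distinct rows. Hence every face has at most $n-1$ vertices and dimension at most $n-2$. This shows the top dimension of spheres is at most $n-2$. The claim that it is exactly $n-2$ will follow once the count in the third step is shown to be positive; it is, since $\bar\nu$-Dyck paths always exist (at least one).

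\emph{Second}, I invoke \Cref{thm_hf_invariance}: the number of $(n-2)$-dimensional homology facets is independent of $\delta$. I can therefore choose the most convenient $\delta$, namely the one giving the $\nu$-Dyck lattice. This is the choice for which the box complex is simplest, since canonical joinands are then single boxes or corners with no $\delta$-dependent lengths.

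\emph{Third}, the main step and the main obstacle: for $\nu_i\ge 2$, I construct a bijection between $(n-2)$-dimensional facets with $\mathcal{R}(F)=F$ and $\bar\nu$-Dyck paths. A top-dimensional facet selects one box in each of the $n-1$ relevant rows, compatibly with the box-complex noncrossing/nesting conditions. The condition that every vertex lies in the restriction means each chosen box cannot be replaced by an earlier box in the shelling order. I expect this forces each chosen box to sit strictly away from the boundary: at least one unit from the left end and one from the right end of the available segment in its row. That is exactly where the shift $\nu_i\mapsto\nu_i-2$ comes from. Recording, for each row, the horizontal position of the chosen box minus these two forbidden boundary positions yields a weakly increasing sequence of east-coordinates. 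I then check this sequence stays weakly above $\bar\nu$, i.e.\ it defines a $\bar\nu$-Dyck path, and conversely that every $\bar\nu$-Dyck path lifts by reinserting the two boundary units per row.

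The delicate points are verifying that the compatibility conditions between consecutive rows translate precisely into the Dyck condition relative to $\bar\nu$, and handling rows where the hypothesis $\nu_i\ge2$ is tight ($\nu_i=2$, where only one admissible position remains). If the direct restriction analysis proves messy, my fallback is to compute the reduced Euler characteristic via Proposition~\ref{theorem_reciprocity} only as a consistency check. It cannot isolate the top-dimensional count when lower-dimensional spheres also appear, so the bijection in the third step must carry the proof.
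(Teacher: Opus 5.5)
Your architecture is the paper's: shellability gives a wedge of spheres counted by homology facets, \Cref{thm_hf_invariance} reduces to $\delta^{\min}=(0,\dots,0)$, and the top count is matched with $\bar\nu$-Dyck paths. But the third step is the only part not supplied by results you cite, and you leave it as an expectation. The two concrete things you say about it are wrong.

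First, you state the homology condition backwards. $R(F)=F$ means that for every $x\in F$ the face $F\setminus\{x\}$ lies in an \emph{earlier} facet, i.e.\ each chosen box \emph{can} be exchanged to give an earlier facet. Which facets are homology facets also depends on how facets with the same label sequence are ordered. The paper fixes this by Convention~\ref{remark_shelling} (right to left); without such a choice there is nothing to characterize.

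Second, your guessed mechanism for $\nu_i\mapsto\nu_i-2$ gives the wrong count; you trim one box at each end of each row's available segment. For $\delta^{\min}$ the shape is right-aligned, with row lengths $L_i=\nu_1+\dots+\nu_i$ increasing upward. So every NE pair spans a rectangle inside the shape, and two boxes in different rows are compatible iff the upper one is strictly to the left. An $(n-1)$-facet is therefore just $t_1<t_2<\dots<t_{n-1}$ with $t_i\le L_i$, where $t_i$ is the position of the chosen box in row $i$ counted from the right.

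Under the paper's shelling, the box in row $i$ lies in $R(F)$ iff it can be shifted one column to the right. Hence the homology facets are exactly those with $t_1\ge 2$ and $t_i\ge t_{i-1}+2$. There is no condition at the left end. Setting $s_i=t_i-(2i-1)$ gives $1\le s_1\le\dots\le s_{n-1}$ with $s_i\le \bar\nu_1+\dots+\bar\nu_i+1$, which is a $\bar\nu$-Dyck path. This is the paper's labeling $r_{i,j}$, which omits the $1+2(i-1)$ rightmost boxes of row $i$.

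So the shift by $2$ is cumulative per north step, not a trimming of both ends of each row. One unit comes from the strict separation forced by compatibility, and one from the gap forced by the homology condition. Subtracting a fixed amount per row cannot turn the strict, gapped sequence into a weakly increasing one. Your version already fails for $\nu=(2,2,2)$: the bottom row has two boxes, trimming both ends leaves none, and you would predict $0$ top-dimensional spheres instead of $1$. For the paper's example $\nu=(2,3,2)$ you would likewise predict $0$ instead of $2$.
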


Notably, for~$\nu=(NE^m)^n$, the top-dimensional spheres are enumerated by Fuss-Catalan numbers and $m=3$ results in classical Catalan numbers, see Corollary~\ref{corollary_wedge}.
Finally, we consider in~\Cref{section::homology} the homotopy type of the canonical join complex of alt $\nu$-Tamari lattices. However, alt $\nu$-Tamari lattices do not have the same homotopy type, see Example~\ref{example_homotopy}. 

The class of alt $\nu$-Tamari lattices appears to exhibit many shared structural coherences, and our combinatorial model provides a universal tool for their study. Preliminary findings suggest significant correspondences with rowmotion operators within the framework of dynamical algebraic combinatorics. Our combinatorial model serves as the primary vehicle for investigating these relationships \cite{rowmotion}.

%All alt $\nu$-Tamari lattices seem to share more structural properties.
%Preliminary evidence already suggests promising links to rowmotion in dynamical algebraic combinatorics. Our combinatorial model will be a fundamental tool  exploration these connections. This is currently underway.

%\begin{enumerate}
%    \item Methods are new
%    \item Alt $\nu$-Tamari lattices have so many coindcidences. In this paper w explore further on the level of the canonical join complex.
%    \item We give a combinatorial realizeation of the canonical join cmplex of alt $\nu$-Tamari lattices, the box complex.
%    \item We proof vertex decomposability.
%    \item The Euler characterisitc of the canonical join complex $\Delta$ of the alt $\nu$-Tamari lattice ${\altTam{\nu}{\delta}}$ is independent of $\delta$ and given by the Narayana polynomial, evaluated at $-1$, meaning $\chi(\Delta)= -N_\nu(-1)+1$.
%    \item However, alt $\nu$-Tamari lattices do not have the same homotopy type. But they are all homotopically equivalent to a wedge of spheres.
%    \item In special case of $m$-Dyck and $m$-Tamari the number of highest dimensional spheres are equal and given by the Fuss Catalan number. Meaning that their homology groups have the same rank and are of the same order.

%    \end{enumerate}

\section{The canonical join complex of alt \texorpdfstring{$\nu$}{nu}-Tamari lattices}
In this section, we introduce a simplicial complex, called the \defn{box complex} $\Delta_u$. Furthermore, we study the canonical join complex of the alt~$\nu$-Tamari lattice and establish that it is realized by $\Delta_u$ (\Cref{realization}).

\subsection{The box complex}\label{section::boxcomplex}
We begin by defining the box complex $\Delta_u$ associated with a unimodal sequence of positive integers $u=(u_1,\dots, u_n)$. Specifically, the sequence satisfies $u_1 \leq \dots \leq u_k \geq \dots \geq u_n$ for some index~$k \in [n]$.

\begin{definition}\label{def_boxcomples}
    \begin{figure}[!b]
    \centering
\includegraphics[width=0.9\textwidth]{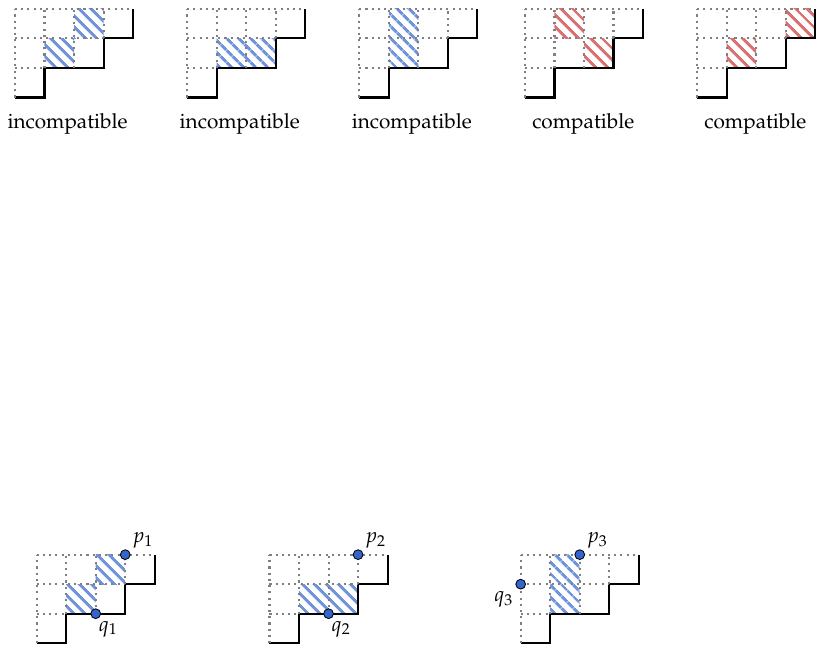}
    \caption{The compatibility relation illustrated for $F_{(3,2,2,1)}$.}
    \label{Fig_v_compatibility}
    \end{figure}
Let $u=(u_1,\dots, u_n)$ be a unimodal sequence of positive integers. We define $F_u$ as a shape consisting of unit boxes arranged such that the $i$th column contains $u_i$ boxes, with all columns top-aligned along a common horizontal line. Two distinct boxes $a, b$ in $F_u$ are said to be \defn{incompatible}, if and only if $a$ is southwest or northeast 
to $b$ and the smallest bounding rectangle containing~$a$ and~$b$ lies entirely within $F_u$ or $a,b$ are in the same column or row. Otherwise, $a$ and $b$ are \defn{compatible} (as illustrated in \Cref{Fig_v_compatibility}).

The \defn{box complex} $\Delta_u$ is defined as the simplicial complex whose faces consist of all subsets of boxes in $F_u$ that are pairwise compatible. Moreover, we define the \defn{transposed shape} $F_{u^t}$ as the shape with $u_i$ boxes in the $i$th row and denote the corresponding box complex by $\Delta_{u^t}$.

\end{definition}
\begin{remark}\label{remark_iso}
We note the following natural isomorphisms regarding the box complex:
\begin{enumerate}
    \item Provided $u_n=1$, we have $\Delta_{(u_1,\dots,u_{n-1},1)} \cong \Delta_{(1,u_1,\dots,u_{n-1})}$.
    \item The box complex $\Delta_u$ is isomorphic to the box complex of its transposed shape, $\Delta_{u^t}$. \label{remark_iso_2}
\end{enumerate}
\end{remark}   
\begin{example}\label{ex_boxcomplex_321}
The box complexes $\Delta_{(3,2,1)}$ and $\Delta_{(1,3,2)}$ are shown in \Cref{Fig_cjc}. As stated in Remark~\ref{remark_iso}, they are isomorphic to each other $\Delta_{(3,2,1)}\cong \Delta_{(1,2,3)}$. Moreover, each box of the shapes corresponds to an unique vertex in the complex. The exact correspondence is illustrated in~\Cref{Fig_ex_cjc_shapes}.
\begin{figure}[!h]
  \centering
  \begin{subfigure}[b]{0.45\textwidth}
    \centering
    \resizebox{1\linewidth}{!}{\includegraphics[page=1]{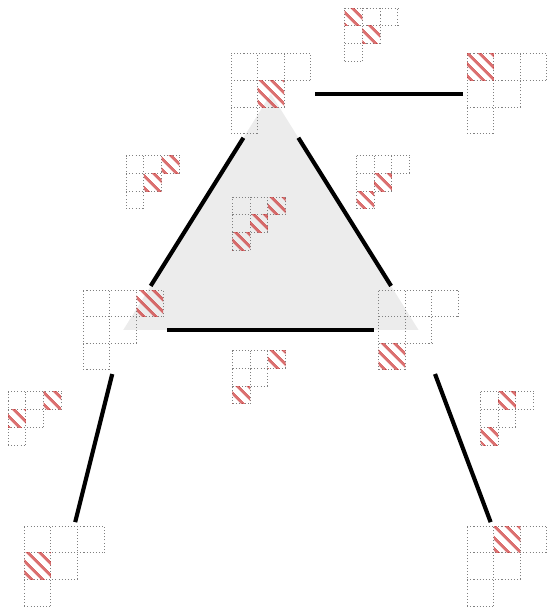}}
    \caption{The box complex $\Delta_{(3,2,1)}$.}
    \label{boxcomplex321}
  \end{subfigure}\hfill
  \begin{subfigure}[b]{0.45\textwidth}
    \centering
    \resizebox{1\linewidth}{!}{\includegraphics[page=1]{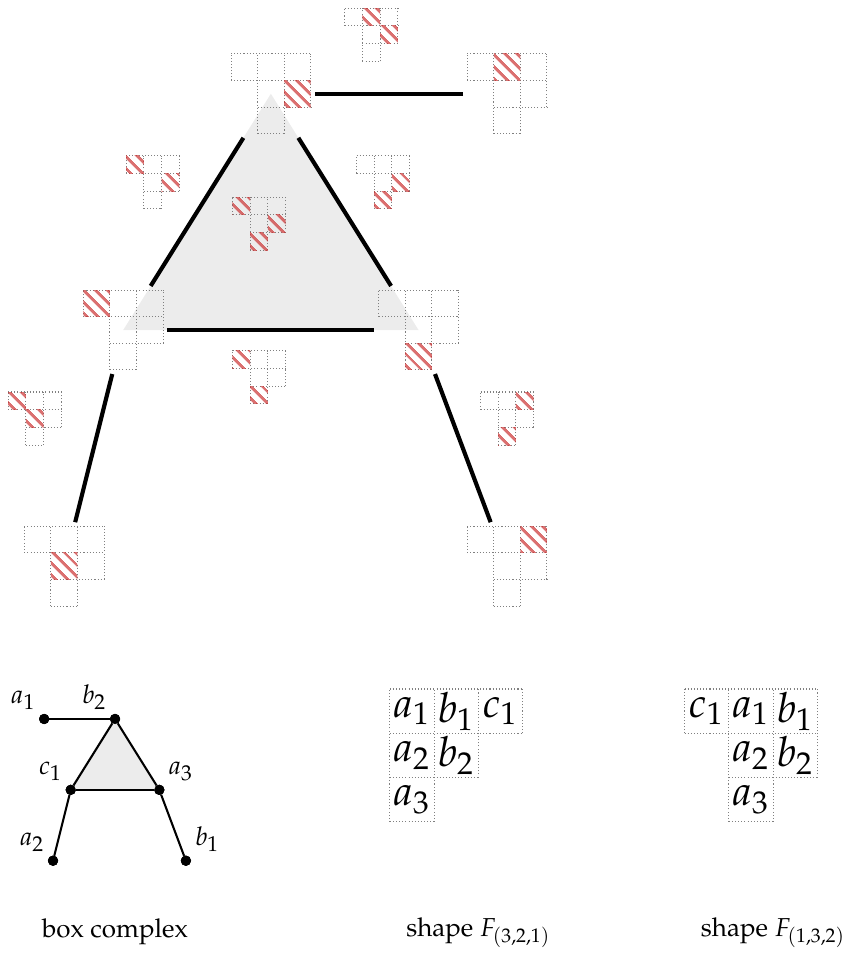}}
    \caption{The box complex $\Delta_{(1,3,2)}$.}
    \label{boxcomplex132}
  \end{subfigure}\hfill
  \caption{The box complexes $\Delta_{(3,2,1)}$ and $\Delta_{(1,3,2)}$ are isomorphic.}
  \label{Fig_cjc}
\end{figure}

\begin{figure}[!h]
    \centering
\includegraphics[width=0.8\textwidth]{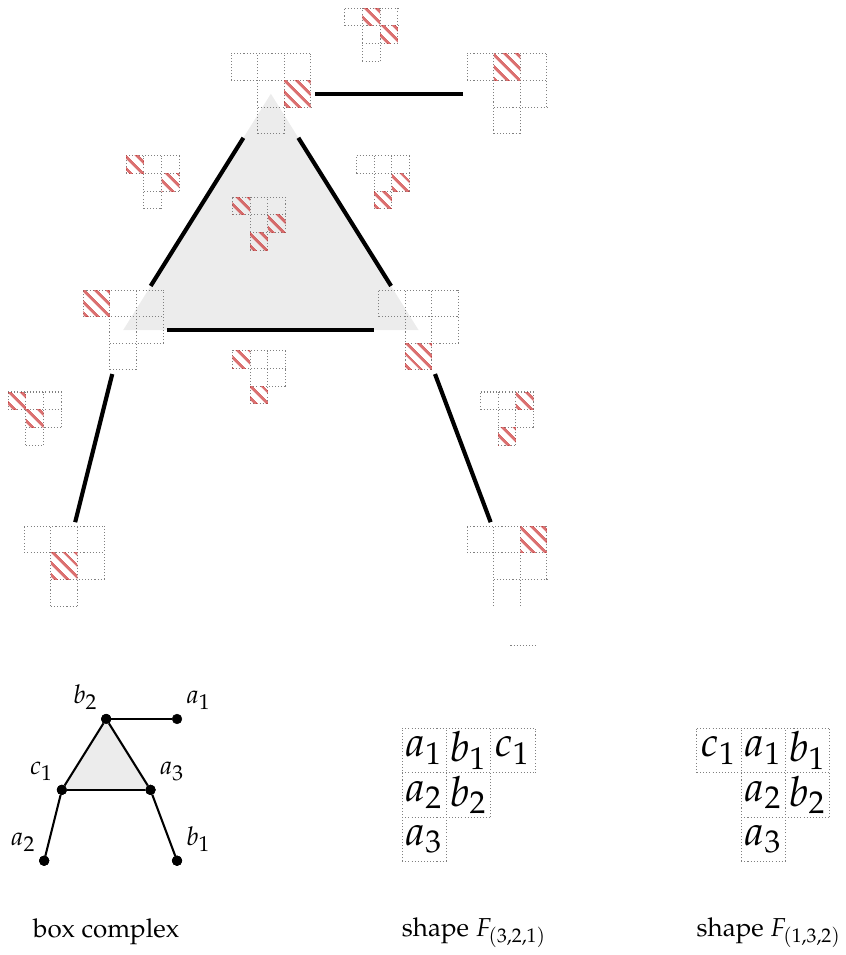}
    \caption{The correspondence between faces in the box complex and boxes in the shape for ${(3,2,1)}$ and ${(1,3,2)}$.}
    \label{Fig_ex_cjc_shapes}
    \end{figure}
\end{example}

\subsection{The canonical join-complex}
In this subsection, we review the construction of the canonical join complex via an edge-labeling. Furthermore, we establish a condition that is particularly well-suited for our subsequent application to the alt~$\nu$-Tamari lattice (Lemma~\ref{observation_labeling}).

A \defn{lattice} is a partially ordered set (short \defn{poset})~$(L, \leq)$ such that any two elements $p, q \in L$ have a unique greatest lower bound, called the \defn{meet}, denoted~$p \wedge q$ and a unique least upper bound, called the \defn{join}, denoted~$p \vee q$. A subset $Q \subseteq L$ is a \defn{sublattice} if it is a lattice under the induced order and is closed under the meet and join operations of $L$, that is, for all $p, q \in Q$, the elements $p \wedge q$ and $p \vee q$ as computed in~$L$ must also lie in $Q$. We denote a cover relation in~$L$ by $x \lessdot y$ and~$\text{Covers}(L)$ denotes the set of all covers in $L$. An element $j$ of a finite lattice is \defn{join-irreducible} if it covers exactly one element, which we denote by $j_\downarrow$. We denote the set of all join-irreducible elements by $\text{JoinIrr}(L)$. A \defn{join-representation} of~$a \in L$ is a subset of join-irreducible elements $A \subseteq L$ such that 
$\bigvee A = a$. We say that $A$ refines $B$ if each $x \in A$ satisfies~$x \le y$ for some $y \in B$. If the join representations of $a$ have a
unique minimal element under refinement, this element is the \defn{canonical join
representation} $\mathrm{Can}(a)$, and its elements are the canonical
joinands of $a$. According to \cite[Proposition 2.2]{reading2015} the set of canonical join representations of $L$ forms a simplicial complex\footnote{We recall several basic notions regarding simplicial complexes in~\Cref{section::Notions_simplicialcomplex}.}, the \defn{canonical join complex}. For more, we refer to \cite{barnard2019}.
A finite lattice $L$ is \defn{join-semidistributive} if, for all $x, y, z \in L$, the following implication holds:$$x\vee y = x\vee z \implies x \vee y = x \vee (y\wedge z).$$
According to \cite[Theorem 2.24]{Freese1995}, $L$ is join-semidistributive if and only if every element possesses a canonical join representation. It is shown in \cite[Lemma 1.8]{Adaricheva2003} that for any cover relation $x \lessdot y$, the set $\{c \in L \mid x \vee c = y\}$ contains a unique, join-irreducible minimal element. This guarantees that the following edge-labeling of a join-semidistributive lattices is well-defined: $$\lambda_{jsd}: \text{Covers}(L) \to \text{JoinIrr}(L), \quad (x,y) \mapsto \min \{c \in L \mid x \vee c = y\}.$$
In terms of this labeling, the characterization in \cite[Lemma 19(1)]{barnard2019} identifies the canonical join representation of $a \in L$ as the set of labels of edges covered by $a$: $$\text{Can}(a) = \{ \lambda_{jsd}(a', a) \mid a' \lessdot a \}.$$
An example of a join-semidistributive lattice with edge-labeling~$\lambda_{jsd}$ and the canonical join complex is shown in~\Cref{Fig_sem_lattice_total}. 
\begin{figure}[!h]
  \centering
  \begin{subfigure}[b]{0.45\textwidth}
    \centering
    \resizebox{.4\linewidth}{!}{\includegraphics[page=1]{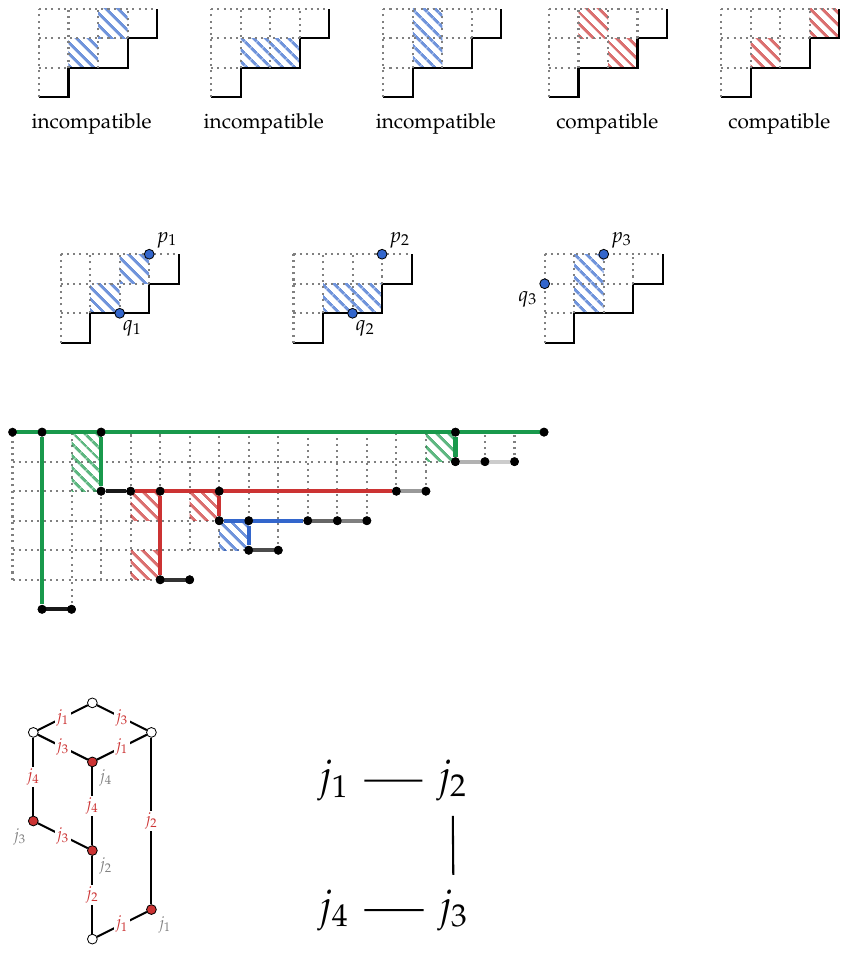}}
    \caption{A join-semidistributive lattice.}
    \label{Fig_sem_lattice}
  \end{subfigure}\hfill
  \begin{subfigure}[b]{0.45\textwidth}
    \centering
    \resizebox{.5\linewidth}{!}{\includegraphics[page=1]{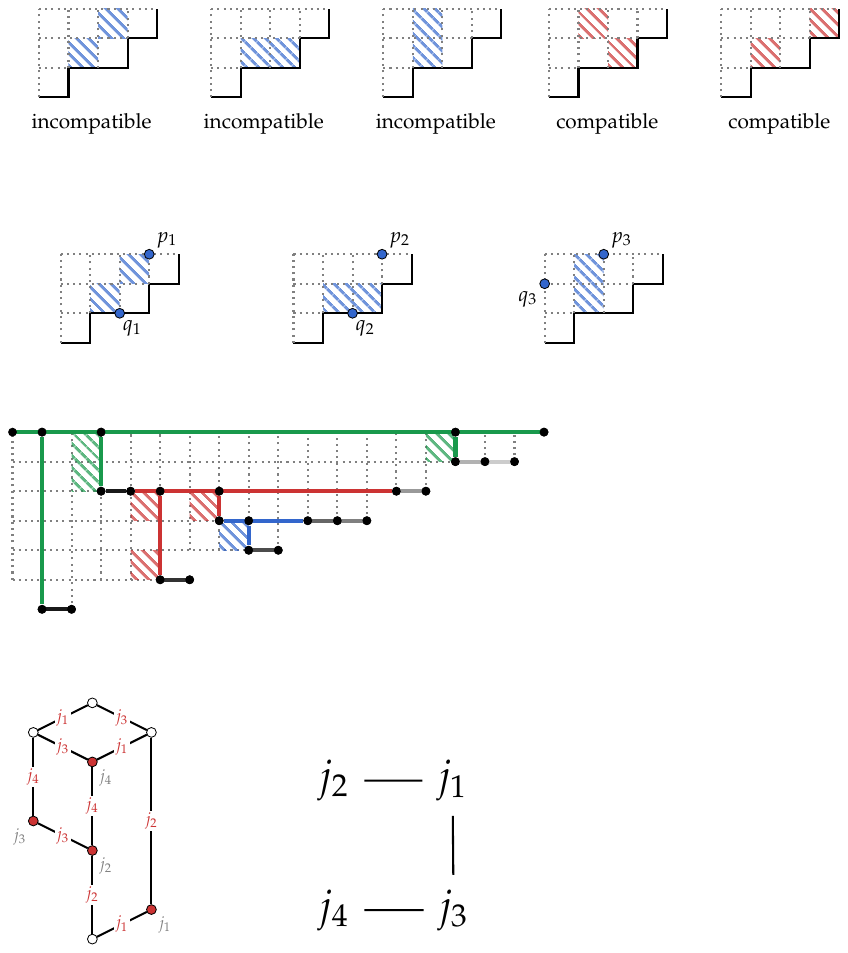}}
    \caption{The canonical join complex of \ref{Fig_sem_lattice}.}
    \label{Fig_sem_cjc}
  \end{subfigure}\hfill

  \caption{A join-semidistributive lattice and its canonical join complex.}
  \label{Fig_sem_lattice_total}
\end{figure}

 Next, we summarize equivalent conditions for the labeling $\lambda_{jsd}$. This looks trivial but will be very useful in \Cref{section_persective_edgelabeling}, considering alt~$\nu$-Tamari lattices.
\begin{lemma}\label{observation_labeling}
Let $L$ be a join-semidistributive lattice, and let $x, y \in L$ such that $x \lessdot y$. For any join-irreducible element $j \in \text{JoinIrr}(L)$ with unique lower cover $j_\downarrow$, the following conditions are equivalent:
\begin{enumerate}
    \item $\lambda_{jsd}(x,y)=j$ \label{cond0}
    \item $\min\{c\in L  \mid x \vee c=y\}=j$ \label{cond1}
    \item $x \wedge j = j_\downarrow$ and $x \vee j = y$\label{cond2} %(or $x \wedge j_\downarrow = x$ and $y \vee j_\downarrow = j$)  
    %\item $y \wedge j_\downarrow = j$ and $x \vee j_\downarrow = x$
    \item $x \wedge j = j_\downarrow$ and $y \wedge j = j$ \label{cond3}
\end{enumerate}
\end{lemma}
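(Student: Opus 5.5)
The plan is to run the cycle $(\ref{cond0}) \Leftrightarrow (\ref{cond1}) \Rightarrow (\ref{cond2}) \Rightarrow (\ref{cond3}) \Rightarrow (\ref{cond1})$, using only the cover relation $x \lessdot y$, the fact that $j$ has a unique lower cover $j_\downarrow$, and join-semidistributivity. The equivalence $(\ref{cond0}) \Leftrightarrow (\ref{cond1})$ is just the definition of $\lambda_{jsd}$. Its well-definedness is guaranteed by \cite[Lemma 1.8]{Adaricheva2003}.

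For $(\ref{cond1}) \Rightarrow (\ref{cond2})$, the identity $x \vee j = y$ holds because $j$ lies in the set $\{c \mid x\vee c=y\}$. Since $x \vee j = y \neq x$, we cannot have $j \le x$, so $x \wedge j < j$ and hence $x\wedge j \le j_\downarrow$. For the reverse inequality, observe that $x \le x \vee j_\downarrow \le x \vee j = y$. Since $x \lessdot y$, the join $x \vee j_\downarrow$ equals either $x$ or $y$. It cannot equal $y$, because $j_\downarrow < j$ would then contradict the minimality of $j$. So $x \vee j_\downarrow = x$, i.e. $j_\downarrow \le x$, and therefore $j_\downarrow \le x\wedge j$. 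The implication $(\ref{cond2}) \Rightarrow (\ref{cond3})$ is immediate: $j \le x\vee j = y$ gives $y\wedge j = j$.

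The only step needing a small idea is $(\ref{cond3}) \Rightarrow (\ref{cond1})$. First, $y \wedge j = j$ means $j\le y$. Moreover $j\not\le x$, since otherwise $x\wedge j=j\neq j_\downarrow$. Hence $x < x\vee j \le y$, and the cover relation forces $x \vee j = y$. For minimality, join-semidistributivity shows that the set $S=\{c \mid x\vee c=y\}$ is closed under binary meets: if $x\vee c = x\vee c' = y$, then $x \vee (c\wedge c')=y$. Since $S$ is finite, it therefore has a least element $m$, and $m\le j$ because $j\in S$. If $m<j$, then $m\le j_\downarrow = x\wedge j\le x$, giving $x\vee m = x\neq y$, a contradiction. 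Thus $m=j$, which is $(\ref{cond1})$. The main point to watch is exactly this use of meet-closure, which upgrades ``unique minimal element'' to ``least element'' so that the comparison $m\le j$ is legitimate.
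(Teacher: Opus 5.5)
Your proof is correct, and it differs from the paper's mainly in being self-contained. The paper proves only \eqref{cond2} $\Leftrightarrow$ \eqref{cond3} by hand. One direction uses the absorption law. The other uses the same cover argument you use to obtain $x\vee j=y$ from $j\le y$ and $j\not\le x$. The substantive equivalence \eqref{cond1} $\Leftrightarrow$ \eqref{cond2} is outsourced to \cite[Lemma 3.3]{Muehle2023}.

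You instead prove that equivalence directly. For \eqref{cond1} $\Rightarrow$ \eqref{cond2}, you combine the minimality of $j$ with $x\lessdot y$ to force $x\vee j_\downarrow=x$; this direction needs no semidistributivity at all. For \eqref{cond3} $\Rightarrow$ \eqref{cond1}, you observe that join-semidistributivity makes $S=\{c\mid x\vee c=y\}$ closed under binary meets. Hence $S$ has a least element $m\le j$, and $m<j$ would give $m\le j_\downarrow\le x$, a contradiction.

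Your route makes explicit that join-semidistributivity enters exactly once. It upgrades ``$j$ is minimal in $S$'', which follows from \eqref{cond3} in any finite lattice, to ``$j$ is the least element of $S$''. It also reproves that the minimum in the definition of $\lambda_{jsd}$ exists. The paper's version is shorter but rests on the citation. The one fact you leave implicit is that every $z<j$ satisfies $z\le j_\downarrow$. This is standard and holds here because $L$ is finite.
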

\begin{proof}
\eqref{cond0} $\Leftrightarrow$ \eqref{cond1} holds by definition. \eqref{cond1} $\Leftrightarrow$ \eqref{cond2}: This is exactly \cite[Lemma 3.3]{Muehle2023}. \eqref{cond2} $\Rightarrow$ \eqref{cond3}: Follows directly from the absorption law\footnote{ The absorption laws for a lattice $L$ are: $x\wedge(x \vee y)=x$ and $x\vee (x \wedge y)=x$ for all $x,y\in L$.} for a lattice: $y\wedge j = (x\vee j) \wedge j = j$. \eqref{cond3} $\Rightarrow$ \eqref{cond2}: Since~$y \wedge j = j$ holds, $j$ is a lower bound of $y$, $j \leq y$. By $x \wedge j = j ^\downarrow$, $j$ cannot be a lower bound of~$x$, $j \not \leq x$. Hence, $x<x\vee j$ holds. Since $y$ is an upper bound for $j$ this gives~$x<x\vee j \leq x \vee y =y$. Since $x\lessdot y $ we obtain $x\vee j=y$.
\end{proof}

Historically, the relationship captured by Condition~\eqref{cond2} traces back to Jakubík in 1955 \cite{JJ1954}. In this work, two intervals $[x, y]$, $[u, v]$ are defined to be \emph{transposed} if they satisfy $y \wedge u = x$ and $y \vee u = v$ (and symmetrically, $x \wedge v = u$ and $x \vee v = y$). This concept was later formalized and extensively studied in 2010 by Grätzer and Nation \cite{GN2010} under the terminology of \defn{perspectivity}. In light of this, the equivalent properties \eqref{cond0}--\eqref{cond3} characterize this relationship, and covers satisfying them are said to be perspective.

\subsection{The alt \texorpdfstring{$\nu$-}-Tamari lattice}\label{section::altvtamari}
Following the conventions of \cite{CCh2024}, we recall the construction of alt $\nu$-Tamari lattices. Let $\nu$ be a finite northeast path, and let~$F_\nu$ denote its corresponding Ferrers diagram, defined as the region weakly above $\nu$ within its bounding rectangle. We encode~$\nu$ as a sequence of non-negative integers $(\nu_0, \nu_1, \dots, \nu_n)$, where $n$ is the total number of north steps, $\nu_0$ is the number of initial east steps, and each $\nu_i$ denotes the number of east steps immediately following the $i$th north step. We say $\delta = (\delta_1, \dots, \delta_n)$ is an \defn{increment vector} relative to $\nu$ if it satisfies $\delta_i \leq \nu_i$ for all $1 \leq i \leq n$. This vector generates a new path $\check{\nu} = (\check{\nu}_0, \delta_1, \dots, \delta_n)$, where $\check{\nu}_0 = \sum_{i=0}^n \nu_i - \sum_{i=1}^n \delta_i$. Furthermore, let~$\hat{\nu}$ be the northwest lattice path sharing the lowest right corner of $F_{\check{\nu}}$, generated by the step sequence $W^{\nu_0}NW^{\nu_1-\delta_1} \dots NW^{\nu_n-\delta_n}$. Finally, we let $F_{\delta,\nu}$ denote the sub-diagram of $F_{\check{\nu}}$ lying weakly above $\hat{\nu}$, and $L_{\delta,\nu}$ its corresponding set of lattice points. This is illustrated in~\Cref{Fig_altv_path}.
  \begin{figure}[h]
    \centering
    \includegraphics[width=1\textwidth]{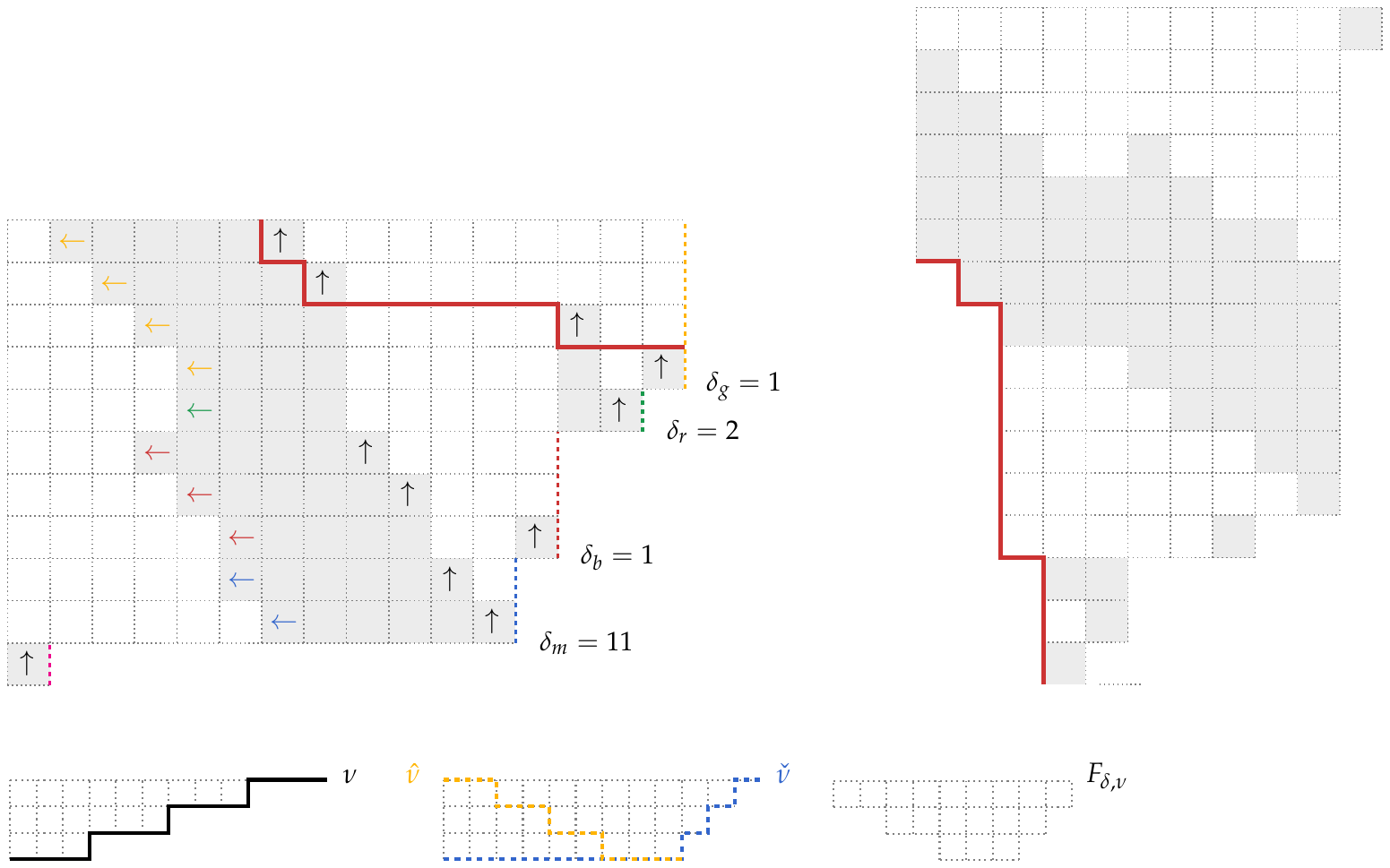}
    \caption{An illustration of the bounding paths constructed from $\nu = (3,3,3,3)$ and $\delta = (1,1,1)$, yielding the step sequences~$\check{\nu} = NE^9NE^1NE^1$ and $\hat{\nu} = W^3NW^2NW^2NW^2$.}
    \label{Fig_altv_path}
\end{figure} 

\begin{definition}[\cite{CCh2024}]
    Two elements $p, q \in L_{\delta,\nu}$ are \defn{$\check{\nu}$-incompatible} if $p$ is strictly southwest (SW) or strictly northeast (NE) of $q$, and the smallest rectangle containing $p$ and $q$ lies completely inside~$F_{\check{\nu}}$. A \defn{$(\delta,\nu)$-tree} is a maximal collection of pairwise $\check{\nu}$-compatible elements in $L_{\delta,\nu}$. Its elements are called \defn{nodes}, and the top-left corner is called the \defn{root}. We associate a rooted binary tree with each~$(\delta,\nu)$-tree $T$ by connecting every $p \in T$ other than the root to the next node in the north or west direction.
\end{definition}

\begin{definition}[alt $\nu$-Tamari lattice~\cite{CCh2024}]\label{def_altvtamari}
  Two $(\delta,\nu)$-trees $T$ and $T'$ are related by a right rotation if $T'$ can be obtained from $T$ by exchanging $q \in T$ with $q' \in T'$, where~$p,r\in T,T'$, and the minimal bounding box containing $p$ and $r$ contains no other nodes different than $q$ or $q'$. The inverse operation is called a left rotation. A node $q'$ in a $(\delta,\nu)$-tree $T$ is called a \defn{descent} if a left rotation can be applied to it. Similarly, it is called an \defn{ascent} if a right rotation can be applied.
  %A node $q'$ in a $(\delta,\nu)$-tree $T$ is called a \defn{descent} if a left rotation can be applied to it. Otherwise, if we can apply a right rotation it is called an \defn{ascent}.
  %\begin{figure}[h]
  %  \centering
  %  \includegraphics[width=0.3\textwidth]{figures/Fig_rotation.pdf}
  %  \caption{Right rotation.}
  %  \label{Fig_cover_relation}
%\end{figure} 
%A node $q$ in a $\nu$-tree $T$ is called a descent, if there exists a node in~$T$ to the south and another to the west of $q$. Equivalently, descents of $T$ are the nodes of $T$ on which we can apply a left rotation. 
The \defn{alt $\nu$-Tamari lattice} $\altTam{\nu}{\delta}$ is the rotation poset of~$(\delta,\nu)$-trees. Examples of the alt $\nu$-Tamari lattices $\altTam{\nu}{\delta}$ are provided in~\Cref{Fig_three_trees}.

\begin{figure}[htb]
    \centering
    \includegraphics[width=1\textwidth]{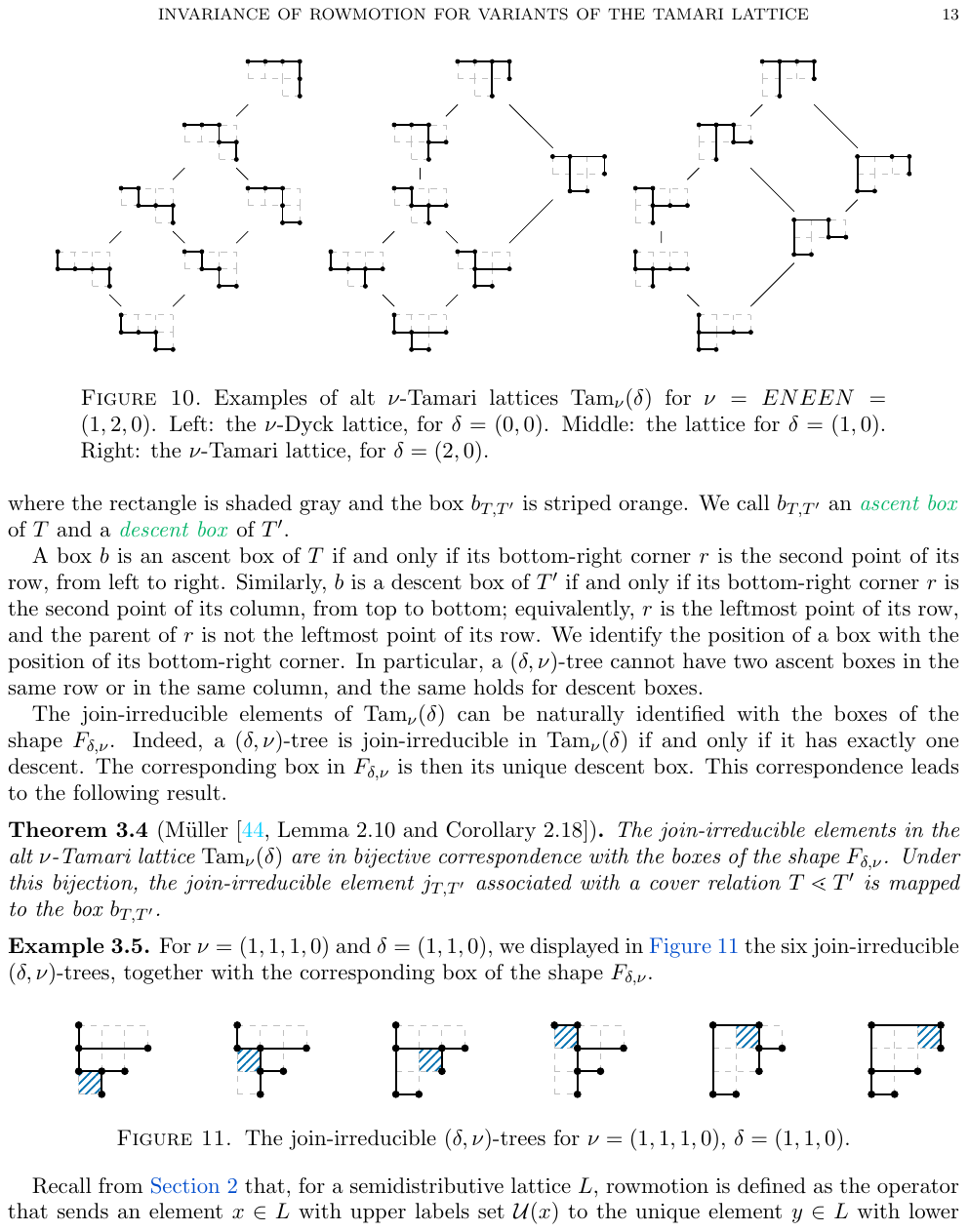}
    \caption{The alt $\nu$-Tamari lattices for $\nu=(1,2,0)$ and from left to right for $\delta_{left}=(0,0)$, $\delta_{middle}=(1,0)$, $\delta_{right}=(2,0)$ via trees.}
    %\caption{Examples of alt $\nu$-Tamari lattices $\altTam{\nu}{\delta}$ for $\nu=ENEEN=(1,2,0)$. Left: the $\nu$-Dyck lattice, for $\delta=(0,0)$. Middle: the lattice for $\delta=(1,0)$. Right: the $\nu$-Tamari lattice, for $\delta=(2,0)$.}
    \label{Fig_three_trees}
\end{figure}
\begin{figure}[htb]
    \centering
    \includegraphics[width=1\textwidth]{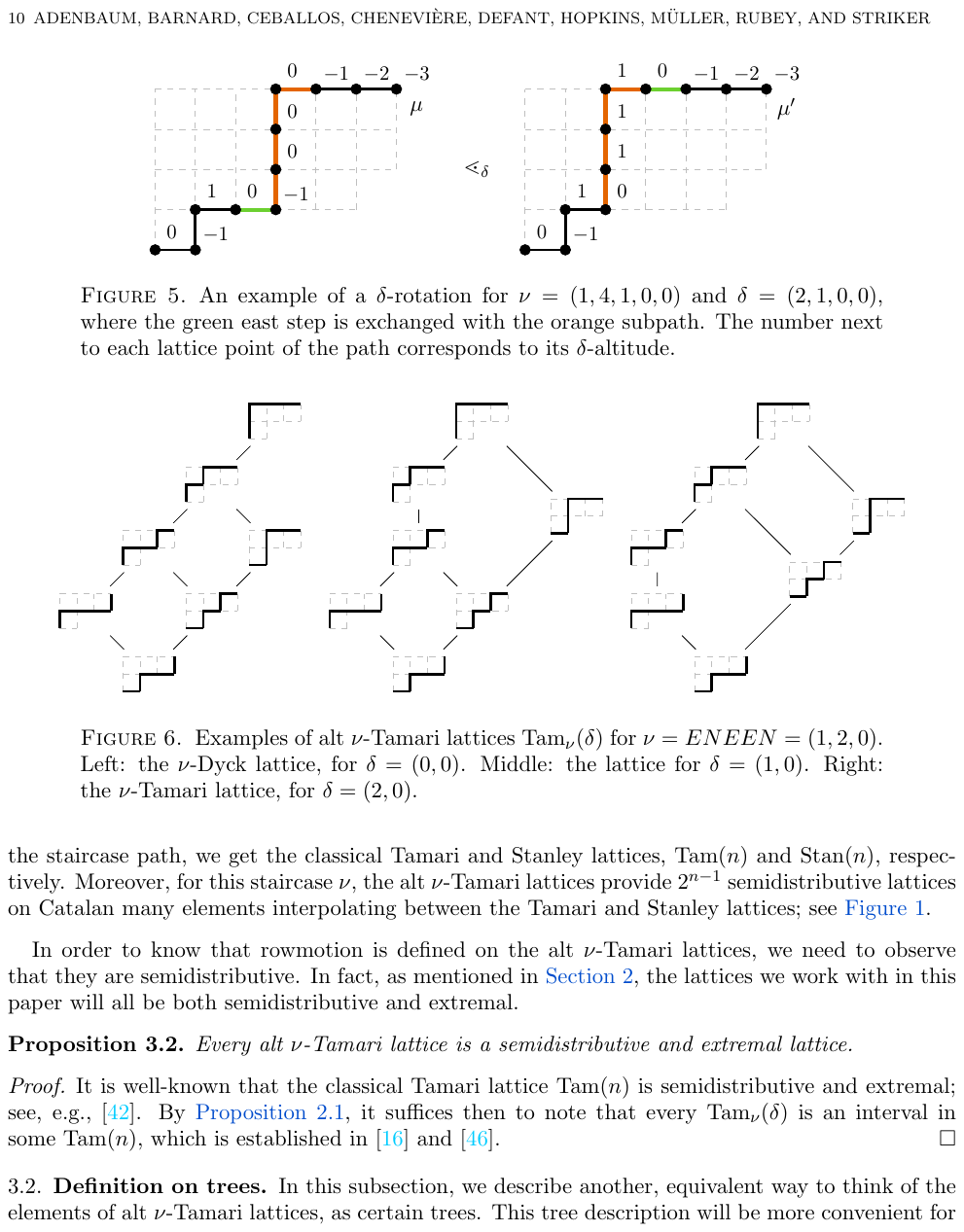}
    %\caption{Examples of alt $\nu$-Tamari lattices $Tam{\nu}({\delta})$ for $\nu=ENEEN=(1,2,0)$. Left: the $\nu$-Dyck lattice, for $\delta=(0,0)$. Middle: the lattice for $\delta=(1,0)$. Right: the $\nu$-Tamari lattice, for $\delta=(2,0)$.}
    \caption{The alt $\nu$-Tamari lattices from \Cref{Fig_three_trees} via paths.}
    \label{Fig_three_paths}
\end{figure}

%\begin{figure}[h!]
%  \centering
%  \begin{subfigure}[b]{0.5\textwidth}
%    \centering
%    \includegraphics[scale=0.55]{figures/Fig_v_lattice.pdf}
%    \caption{$\delta=(1)$ and $\nu=(2,1)$.}
%    \label{Fig_v_lattice}
%  \end{subfigure}\hfill
%  \begin{subfigure}[b]{0.5\textwidth}
%    \centering
%    \includegraphics[scale=0.55]{figures/Fig_altv_lattice.pdf}
%    \caption{$\delta=(1)$ and $\nu=(1,2)$.}
%    \label{Fig_altv_lattice}
%  \end{subfigure}

%  \caption{Alt $\nu$-Tamari lattices for different $\delta$ and $\nu$.}
%  \label{ex_lattices}
%\end{figure}
\end{definition}

\begin{remark}
In Definition~\ref{def_altvtamari}, we recalled that the alt $\nu$-Tamari poset is defined as the rotation poset of $(\delta,\nu)$-trees. Ceballos and Chenevière~\cite{ceballos_vTamari_subword_2020} established a bijection between $(\delta,\nu)$-trees and $\nu$-Dyck paths, where the parameter $\delta$ determines the order relation between the paths. As a small illustrative example, the paths corresponding to the trees in \Cref{Fig_three_trees} are shown in \Cref{Fig_three_paths}. Several special cases for $\delta$ are summarized in Remark~\ref{remark_special_cases_lattice}.
\end{remark}

\begin{remark}\label{remark_special_cases_lattice}
The alt $\nu$-Tamari lattice specializes to several well-known structures depending on the choice of parameters $\nu$ and $\delta$ (where $0\leq i\leq n$ and $1\leq j\leq n$).
\begin{enumerate}[label=(\roman*)]
\item Setting $\nu_i = 1$ and $\delta_j = 1$ recovers the classical Tamari lattice, originally introduced by Tamari in 1962 \cite{Tamari1962}.

\item Setting $\nu_i = 1$ and $\delta_j = 0$ yields the Dyck lattice, also referred to as the Stanley lattice \cite{Sanley1975}.

\item Setting $\nu_i = m$ and $\delta_j = m$ produces the $m$-Tamari lattice, whereas $\delta_j = 0$ yields the $m$-Dyck lattice \cite{Bergeron2012}.

\item Setting $\delta_j = \nu_j$ results in the $\nu$-Tamari lattice \cite{preville_vTamari_2017}. Under these constraints, $(\delta,\nu)$-trees naturally simplify to $\nu$-trees whose nodes occupy $\nu$-compatible positions.
\end{enumerate}
\end{remark}

\begin{remark}
The alt $\nu$-Tamari lattice is uniquely characterized by the shape $F_{\delta,\nu}$. In \cite{alt}, Ceballos defines alt $\nu$-Tamari lattices by specifying the number of points in each column of the shape. Under our notation (Definition~\ref{def_boxcomples}), $u$ instead describes the boxes of the shape. The shape $F_{\delta, \nu}$ corresponds to a unique $F_u$ with $u = u(\delta, \nu)$. We drop the explicit dependence on $(\delta, \nu)$ for simplicity.
\end{remark}

The Tamari lattice is known to be semidistributive, a property inherited by all of its sublattices. Since every alt $\nu$-Tamari lattice forms an interval within a larger Tamari lattice~\cite{preville_vTamari_2017, CCh2024}, it follows that alt $\nu$-Tamari lattices are also semidistributive. Consequently, the canonical join complex of the alt $\nu$-Tamari lattice is well-defined.

%It is known that the Tamari lattice is congruence-uniform, and every sublattice of a congruence-uniform lattice is congruence-uniform as well~\cite[Theorem 4.3]{alanday}. Since every alt $\nu$-Tamari lattice is an interval of a larger Tamari lattice~\cite{preville_vTamari_2017, CCh2024}, alt $\nu$-Tamari lattices are also congruence-uniform and, in particular, join-semidistributive. Therefore, their canonical join complex of the alt $\nu$-Tamari lattice exists.

%\subsection{Join-irreducible elements in alt $\nu$-Tamari lattices}
\begin{lemma}\label{joinirreducible}
    The join-irreducible elements in the alt $\nu$-Tamari lattice are in bijective correspondence with the boxes of the shape.
\end{lemma}
\begin{proof}
A $(\delta,\nu)$-tree $T$ is join-irreducible if it contains a unique descent $p \in T$. Equivalently, $T$ has exactly one non-left-most column with at least two nodes. All other non-left-most columns have exactly one node, and the left-most column can have arbitrary many nodes. Consequently, the descent $p$ is the top-right corner of a box $a$ within the shape~$F_{\delta, \nu}$. Because $T$ is a maximal collection of $\check{\nu}$-compatible elements within $F_{\delta, \nu}$, it must include a node positioned at the bottom-right corner of the box $a$. Thus, we naturally associate the join-irreducible $(\delta,\nu)$-tree $T$ with the unique box $a$.
\end{proof}

\begin{example} \label{ex}
    For $\nu=(1,2)$ and $\delta=(1)$ the four join-irreducible $(\delta,\nu)$-trees correspond to the four boxes of the shape. This is illustrated in~\Cref{Fig_ex_joinirreducibles}.
\end{example}
    \begin{figure}[h]
    \centering
    \includegraphics[width=0.95\textwidth]{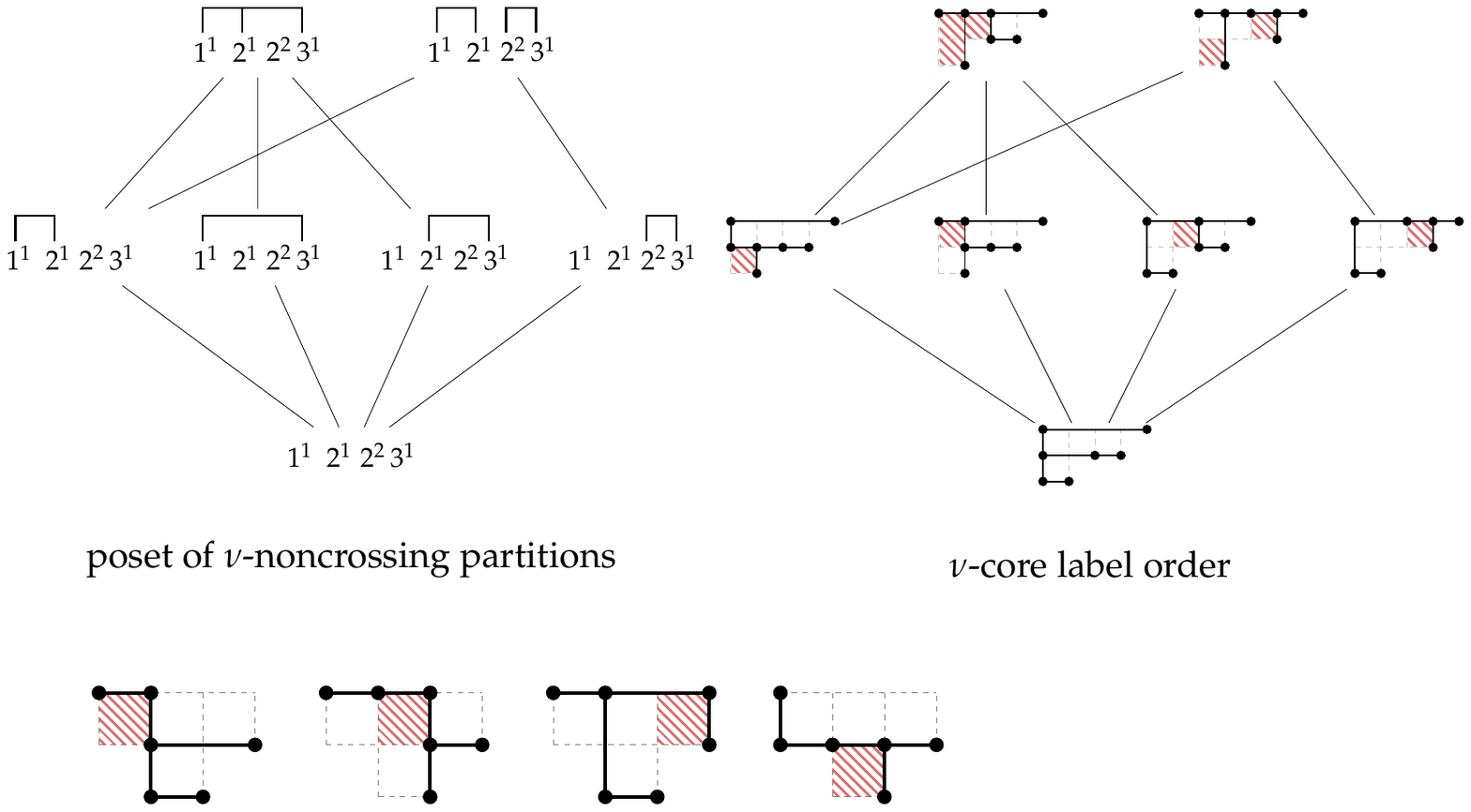} 
    \caption{The join-irreducible $(\delta,\nu)$-trees for $\delta =(1,0)$, $\nu=(1,2,0)$.}
    \label{Fig_ex_joinirreducibles}
    \end{figure}
    
\subsection{The canonical edge-labeling for the alt \texorpdfstring{$\nu$}{nu}-Tamari lattice} \label{section_persective_edgelabeling}
In this subsection, an explicit local description of $\lambda_{jsd}$ for alt $\nu$-Tamari lattices is developed. As a consequence, individual edges can be labeled without reference to the global structure of the lattice. The discussion starts with the special case of the $\nu$-Tamari lattice and is extended to alt $\nu$-Tamari lattices.

\subsubsection{The canonical edge-labeling for the $\nu$-Tamari lattice}
If $(\delta_1,\dots,\delta_n)=(\nu_1,\dots,\nu_n)$, the alt $\nu$-Tamari lattice coincides with the $\nu$-Tamari lattice. In order to obtain a concrete description of $\lambda_{jsd}$ in this setting, we define for any cover $x\lessdot y$ the unique join-irreducible element $j$ satisfying Lemma~\ref{observation_labeling} \eqref{cond3}, that is,
\[
x \wedge j = j_\downarrow \qquad \text{and} \qquad y \wedge j = j.
\]
In this situation, the meet of two $\nu$-trees can be computed efficiently via their bracket vectors.

\begin{definition}[\!\cite{ceballos_vTamari_subword_2020}]
We associate each node of a $\nu$-tree $T$ with its corresponding $y$-coordinate. The bracket vector $b(T)$ is then obtained by performing an \defn{in-order} traversal of $T$, also referred to as symmetric order \cite{preville_vTamari_2017}. Recursively, this traversal visits the left subtree $A$ of a root $x$, followed by $x$ itself, and concludes with the right subtree $B$. A schematic illustration is provided in \Cref{Fig_shema}, which depicts a cover relation $T\lessdot T'$ together with the corresponding bracket vectors~$b(T)=AbBcC$ and $b(T')=AcBcC$. A concrete example of a $\nu$-tree $T$ together with its bracket vector $b(T)$ is shown in~\Cref{Fig_shema_ex}.

\begin{figure}[!h]
    \centering
    \includegraphics[width=0.85\textwidth]{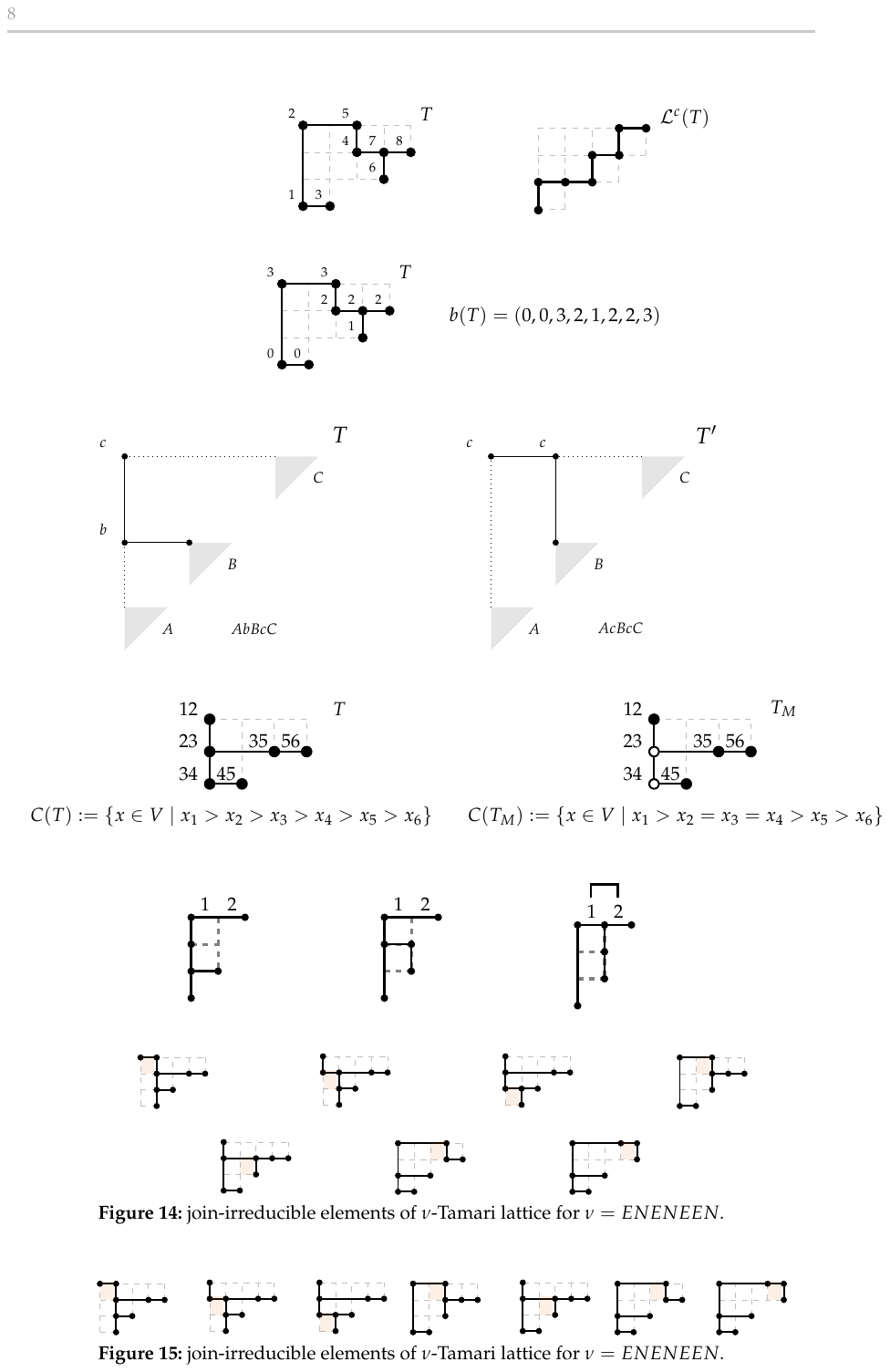} 
    \caption{A cover relation $T \lessdot T'$ between $\nu$-trees, with heights read in in-order \cite[Figure 9]{ceballos_vTamari_subword_2020}.}
    \label{Fig_shema}
    \end{figure}
    \begin{figure}[h]
    \centering
    \includegraphics[width=0.8\textwidth]{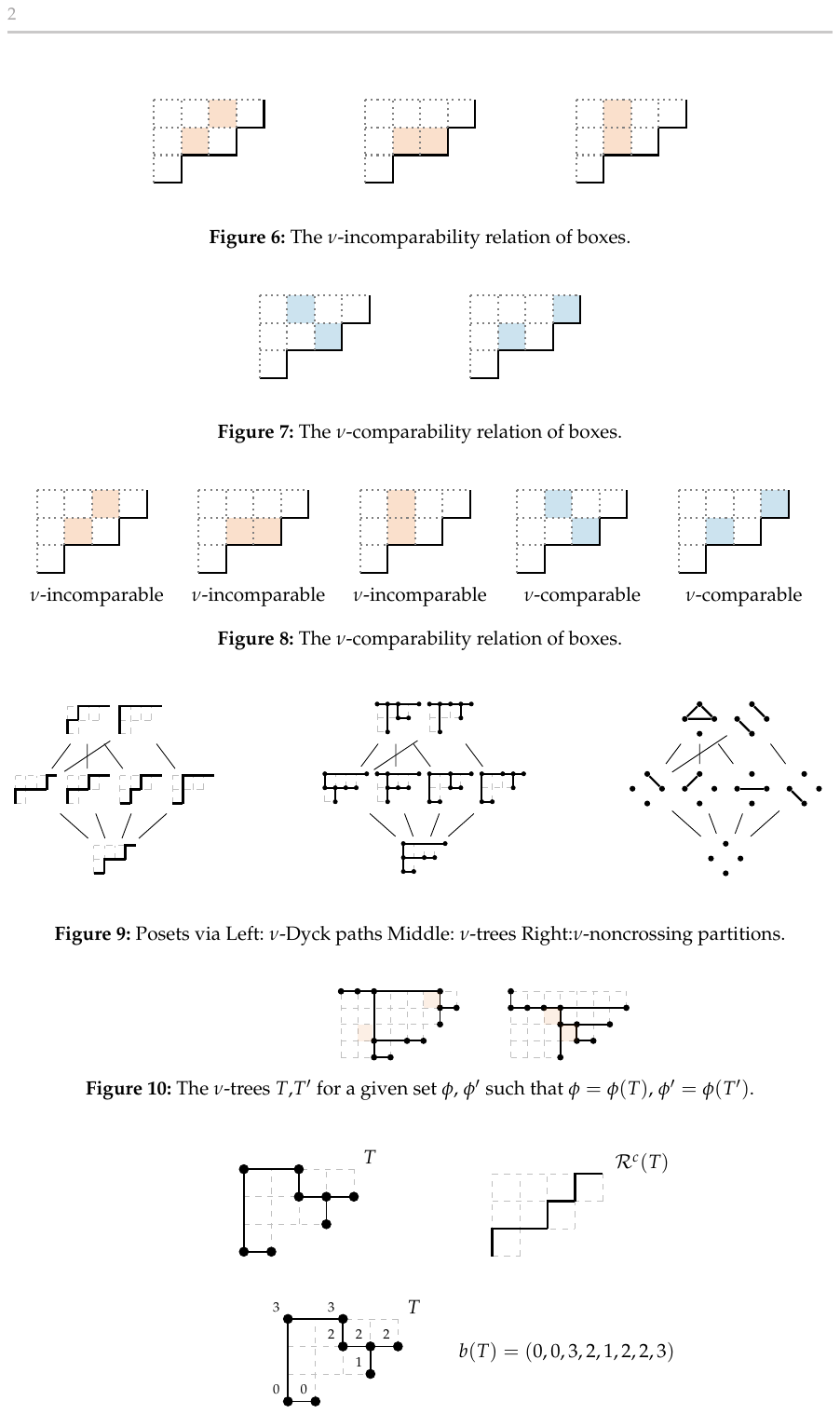} 
    \caption{A $\nu$-tree $T$ and the bracket vector $b(T)$.}
    \label{Fig_shema_ex}
\end{figure}
\end{definition}

According to \cite[Theorem 21]{ceballos_vTamari_subword_2020}, the poset of bracket vectors ordered component-wise is isomorphic to the $\nu$-Tamari lattice. To compute the meet of two $\nu$-trees, we use the following Proposition~\ref{meet_bracketvectors}. 

\begin{proposition}[\textnormal{\!\cite[Proposition 4.12]{ceballos_vTamari_subword_2020}}]\label{meet_bracketvectors}
    The meet of two bracket vectors $b = (b_1,\dots, b_{l})$ and $b' = (b'_1,\dots, b'_{l})$ is their component-wise minimum $$b \wedge b' = (\text{min}\{b_1, b'_1\},\dots, \text{min}\{b_{l}, b'_{l}\}).$$
\end{proposition}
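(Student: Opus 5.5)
Proposition~\ref{meet_bracketvectors} is quoted from \cite[Proposition 4.12]{ceballos_vTamari_subword_2020}. To justify it here, I would rely on the order isomorphism \cite[Theorem 21]{ceballos_vTamari_subword_2020}: the $\nu$-Tamari lattice is isomorphic to the set $B_\nu$ of bracket vectors ordered component-wise. Under an order isomorphism, meets correspond to meets. It therefore suffices to show that $B_\nu$, as a subposet of $\mathbb{Z}^l$ with the product order, is closed under component-wise minimum. Once this is known, $\min(b,b')$ lies in $B_\nu$ and is clearly the greatest lower bound there, since it is already the greatest lower bound in $\mathbb{Z}^l$.

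The main work is a characterization of $B_\nu$ by local conditions that are preserved under minima. I would use the description from \cite{ceballos_vTamari_subword_2020}. A vector $b=(b_1,\dots,b_l)$ is a bracket vector if and only if two conditions hold.
\begin{enumerate}
\item Each entry lies in a fixed range $f_k\le b_k\le c_k$, where $c_k$ is the height of the $k$-th entry for the top tree and $f_k$ is the height for the bottom tree. Some positions are forced, namely the last visit at each height.
\item Whenever $b_k=h$, every entry $b_j$ with $j$ between $k$ and the fixed position $f^{(h)}$ of height $h$ satisfies $b_j\le h$. This is the non-nesting condition coming from the in-order traversal, where a subtree rooted at height $h$ only contains smaller heights.
\end{enumerate}

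Closure under minimum then follows from a short check. The range condition is obviously preserved. For the nesting condition, let $m=\min(b,b')$ and suppose $m_k=h$, say with $h=b_k$. For $j$ between $k$ and $f^{(h)}$ we have $m_j\le b_j\le h$. The forced positions agree for $b$ and $b'$, since they depend only on $\nu$, so the interval used in the condition is the same for $m$ as for $b$. Hence $m$ satisfies the condition, and $m\in B_\nu$.

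The main obstacle is stating the bracket-vector characterization precisely, in particular pinning down the correct forced positions and the direction of the interval. A wrong variant, for example using entries $\ge h$ on the other side, would not be preserved by minima. I would verify it first on the example in \Cref{Fig_shema_ex}, and, if needed, prove the characterization by induction on the number of nodes. The induction would remove the root's rightmost subtree according to the decomposition $b(T)=AxB$ from the in-order traversal.
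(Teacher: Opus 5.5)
Your proposal is correct. The paper gives no proof of its own and simply quotes \cite[Proposition 4.12]{ceballos_vTamari_subword_2020}. Your argument is essentially the standard proof of that result: you combine the componentwise order isomorphism of \cite[Theorem 21]{ceballos_vTamari_subword_2020} with the fact that the characterization of $\nu$-bracket vectors (fixed entries $b_{f_h}=h$, lower bound $b(\nu)\le b$, and the nesting condition $b_i=h\Rightarrow b_j\le h$ for $i<j\le f_h$) is visibly preserved under componentwise minima.
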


\begin{definition}\label{definition_ji_tree}
For a cover relation $T\lessdot T'$ in the $\nu$-Tamari lattice we define the associated join-irreducible element~$j_{(T,T')}$ as the unique join-irreducible $\nu$-tree, corresponding to the right bottom box of the rectangular shape covered by the rotation from $T$ to $T'$. This is shown in~\Cref{Fig_v_label_withtree}.
     \begin{figure}[h]
    \centering
    \includegraphics[width=1\textwidth]{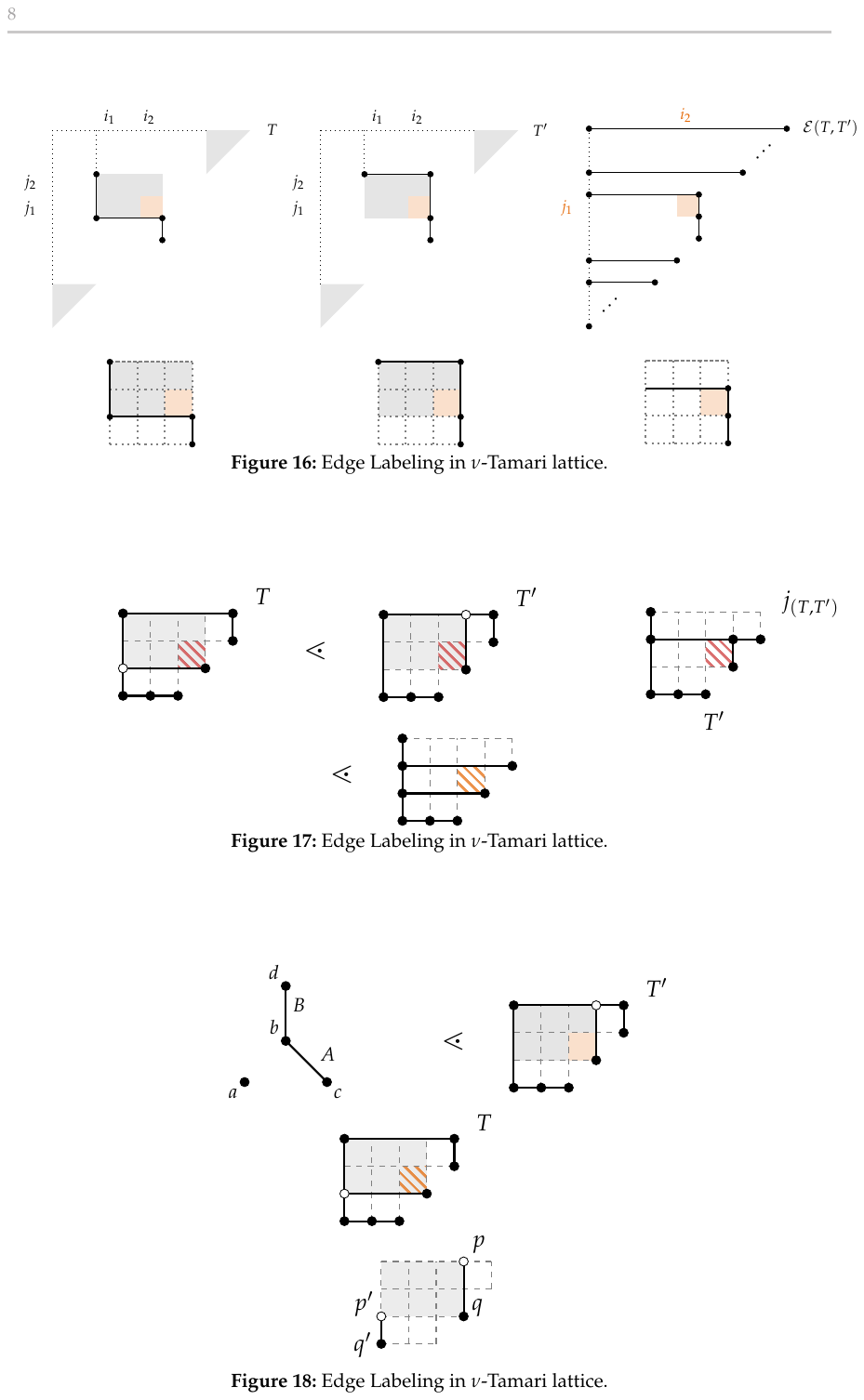} 
    \caption{Cover relation $T\lessdot T'$ and the $\nu$-tree $j_{(T,T')}$.}
    \label{Fig_v_label_withtree}
    \end{figure} 
\end{definition}
\begin{lemma}\label{edgelabeling}
Let $T \lessdot T'$ be a cover relation in the $\nu$-Tamari lattice. Let $j = j_{(T,T')}$ be the associated join-irreducible element and $j_\downarrow$ its unique down cover. Then, $$T \wedge j = {j_\downarrow} \quad \text{ and } \quad  T' \wedge j = j,$$ holds.
\end{lemma}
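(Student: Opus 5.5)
We plan to argue entirely with bracket vectors. By \cite[Theorem 21]{ceballos_vTamari_subword_2020} and Proposition~\ref{meet_bracketvectors}, the $\nu$-Tamari lattice is isomorphic to the poset of bracket vectors ordered componentwise, and meets are componentwise minima. So both identities reduce to coordinate comparisons between $b(T)$, $b(T')$, $b(j)$ and $b(j_\downarrow)$.

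\textbf{Step 1: bracket vectors of the cover.} For a cover $T\lessdot T'$, the schematic of \Cref{Fig_shema} gives $b(T)=AbBcC$ and $b(T')=AcBcC$. The rotation raises exactly one entry, at a position $k$, from $b$ to $c$. Here $b<c$ are the heights of the bottom and top of the rectangle swept by the rotation, and the same position $k$ corresponds to the right bottom box of that rectangle.

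\textbf{Step 2: bracket vectors of $j$ and $j_\downarrow$.} We write down $b(j)$ explicitly from the description in the proof of Lemma~\ref{joinirreducible}. The join-irreducible tree $j=j_{(T,T')}$ has a single descent at the top-right corner of the box $a$, with a node at the bottom-right corner of $a$. Every other non-leftmost column carries one node, placed as low as $\nu$-compatibility allows. Hence $b(j)$ agrees with the bracket vector of the bottom element $\hat 0$ in all coordinates except one block, namely the entries of the nodes forced up by the descent. Its unique lower cover $j_\downarrow$ is obtained by left-rotating at that descent, so $b(j_\downarrow)$ equals $b(j)$ except that the entry at the position associated with $a$ drops from the top height $c'$ of $a$'s rectangle to its bottom height $b'$. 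We then identify $b'=b$, $c'=c$ and the position with $k$, using that $a$ is the right bottom box of the rotated rectangle in Definition~\ref{definition_ji_tree}.

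\textbf{Step 3: the two componentwise inequalities.} We need to show:
\begin{itemize}
\item[(i)] $b(j)\le b(T')$ componentwise, which gives $T'\wedge j=j$;
\item[(ii)] $\min(b(T),b(j))=b(j_\downarrow)$.
\end{itemize}
Given (i) and $b(T)=b(T')$ off position $k$, statement (ii) reduces to checking position $k$, where $\min(b,c)=b$ is exactly the entry of $b(j_\downarrow)$.

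So everything reduces to (i). The plan is to show that each entry of $b(j)$ is the minimal possible value at that position among all $\nu$-trees that have value $\ge c$ at position $k$. The entries of $b(j)$ off the block are the minimal (i.e.\ $\hat 0$) values, so they are bounded by anything. The remaining entries are those forced by having a node at the corner at height $c$. Since $T'$ has entry $c$ at position $k$, it must carry nodes above, north or west of the corresponding point. By $\nu$-compatibility, meaning no incompatible SW/NE pairs with bounding rectangle inside $F_\nu$, these nodes force the entries in the same in-order block to be at least the corresponding entries of $b(j)$.

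\textbf{Main obstacle.} The hard part will be this forcing argument, which needs a careful description of how in-order positions correspond to lattice points. I would first try an induction on the recursive in-order decomposition $A\,x\,B$ around the root. The idea is to show that the constraint ``entry $\ge c$ at position $k$'' propagates only inside the subtree containing $k$ and gives exactly the lower bounds realized by $j$. I would check the construction against \Cref{Fig_shema_ex} first.
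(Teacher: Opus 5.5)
Your framework is the paper's: pass to bracket vectors, use componentwise minima, and compare $b(T)=(A,b,B,c,C)$ and $b(T')=(A,c,B,c,C)$ with $b(j)$ and $b(j_\downarrow)$. Your reduction of (ii) to (i) is also sound. However, Step~2 contains an error: in general $c'\neq c$.

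The box $a$ is the unit box whose bottom-right corner is the corner $r$ of the rotated rectangle, at height $b$. So $j$ has its descent at height $b+1$, directly above a node at height $b$, and $j_\downarrow\lessdot j$ changes one entry from $b$ to $b+1$. This is the paper's $b(j)=(A_0,b+1,B_0,b+1,C_0)$.

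A concrete case: take $\nu=(NE)^3$, whose lattice points are those with $0\le x\le y\le 3$, and let $T=\{(0,0),(0,1),(0,3),(1,1),(2,2),(2,3),(3,3)\}$. Rotating $(0,1)$ to $(1,3)$ gives $b(T)=(0,1,1,3,2,3,3)$ and $b(T')=(0,3,1,3,2,3,3)$, so $k=2$, $b=1$, $c=3$. The tree $j$ of the box $[0,1]\times[1,2]$ has $b(j)=(0,2,1,2,2,3,3)$ and $b(j_\downarrow)=(0,1,1,2,2,3,3)$.

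Your Step~3 characterization (the entrywise minimum over trees with entry $\ge c$ at position $k$) instead produces $(0,3,1,2,2,3,3)$. That is the join-irreducible of the box $[0,1]\times[2,3]$, and for it the first identity fails. The reduction survives with $b+1$ in place of $c$, since $\min(b,b+1)=b$ and $b+1\le c$. But the forcing argument as you framed it aims at the wrong tree.

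More seriously, the two facts the whole argument rests on are only asserted.

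(a) \emph{Alignment.} You need that $b(j)$ and $b(j_\downarrow)$ differ exactly in coordinate $k$, and that the second entry $b+1$ of $b(j)$ sits where $b(T')$ has its second $c$. This is not automatic. In $b(T)$, coordinate $k$ is read from $q$, at height $b$ in the column of $p$. In $b(j)$, it is read from the west neighbour of the descent, at height $b+1$ and possibly in a different column. The paper spends parts (i) and (ii) of its proof on exactly this, showing $\ell(A)=\ell(A_0)$ and $\ell(B)=\ell(B_0)$. The second equality holds because both $B$ and $B_0$ are read from the subtree rooted at the common node at the bottom-right corner of $a$, which is a maximal compatible set in the same region.

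(b) \emph{The inequality $b(j)\le b(T')$.} You leave this to an unexecuted induction over a ``block'' of forced-up entries. The missing observation is that there is no such block. Every non-leftmost column of $j$ other than the descent's carries a single node, so $A_0$, $B_0$, $C_0$ consist of the minimal entries, namely those of the bottom element. The only possibly non-minimal entries of $b(j)$ are therefore the two entries $b+1$. Once (a) is known, these are dominated by the two entries $c$ of $b(T')$. Both identities then follow by direct componentwise comparison, with no forcing argument needed.
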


\begin{proof} 
Let $T \lessdot T'$ be a cover relation in the $\nu$-Tamari lattice with associated join-irreducible element $j = j(T, T')$ as in Definition~\ref{definition_ji_tree}, and unique down cover $j_\downarrow$. By Proposition \ref{meet_bracketvectors}
it is enough to verify for the corresponding bracket vectors:
\begin{align}\label{minimum_bracket_vectors}
\min\{\,b(T),\,b(j)\} = b({j_\downarrow}),
\quad
\min\{\,b(T'),\,b(j)\} = b(j).
\end{align}
For this we split the bracket vector entries into blocks and compare them separately. The $\nu$-tree $T'$ is obtained from $T$ by rotating an unique ascent with $y$-coordinate~$b$ to height~$c>b$, see~\Cref{Fig_shema}. We use $A, B,$ and $C$ to represent the entries of the bracket vector $b(T)$ (resp. $b(T')$) before or after reading the height of a node adjacent to the ascent at height $b$ (resp. descent at height $c$). The bracket vectors~$b(j)$ and $b({j_\downarrow})$ are splitted in a similar way, where~$c=b+1$. We write
\begin{align*}
b(T) &= (A,\;b,\;B,\;c,\;C), \\
b(T') &= (A,\;c,\;B,\;c,\;C),\\
b(j)  &= (A_0,\;b+1,\;B_0,\;b+1,\;C_0),\\
b({j_\downarrow}) &= (A_0,\;b,\;B_0,\;b+1,\;C_0).
\end{align*}
To verify \eqref{minimum_bracket_vectors}, we show:
    \begin{enumerate}[label=(\roman*)]
\item The length of the blocks $A$ and $A_0$ is the same $\ell(A)=\ell(A_0)$.\label{bracket1}
\item The length of the blocks $B$ and $B_0$ is the same $\ell(B)=\ell(B_0)$.\label{bracket2}
\item $A_0$, $B_0$, $C_0$ have minimal entries .
        \label{bracket3}
    \end{enumerate}
    
\ref{bracket1}: We consider the $\nu$-trees $T$ and $j$. By construction, both contain a node at the right bottom corner of the shaded box corresponding to the rotation from $T$ to~$T'$. This is the red shaded box in \Cref{Fig_proof_counting}. Since the bracket vector entries are obtained by reading the $y$-coordiantes of the nodes in in-order, it is enough to consider the shape of the $\nu$-trees $T$ and $j$ left to this unique shaded box. Notice, both trees contain a node in the right bottom corner of this red shaded box. By compatibility, the trees cannot contain a node southwest to this node. This is illustrated in \Cref{Fig_proof_counting} by the gray shaded region. Now, reading the nodes in in-order reads at most one node per column above the gray shaded region, illustrated as colored points. Again, by compatibility, the trees cannot contain a node strictly southwest of any of the marked points. 

Therefore, we can vertically contract the gray area and everything above, keeping the black nodes adjacent to the corresponding ascent or descent. This results in a $\nu$-tree of smaller shape. otherwise, IT we could be possible to add a node in the original shape, contradicting the definition of the original $\nu$-tree. In the $\nu$-tree $j$ we can also contract the gray shaded region and obtain a $\nu$-tree of the same shape. This implies that the umber of nodes is the same.
\begin{figure}[!h]
    \centering
    \includegraphics[width=0.65\textwidth]{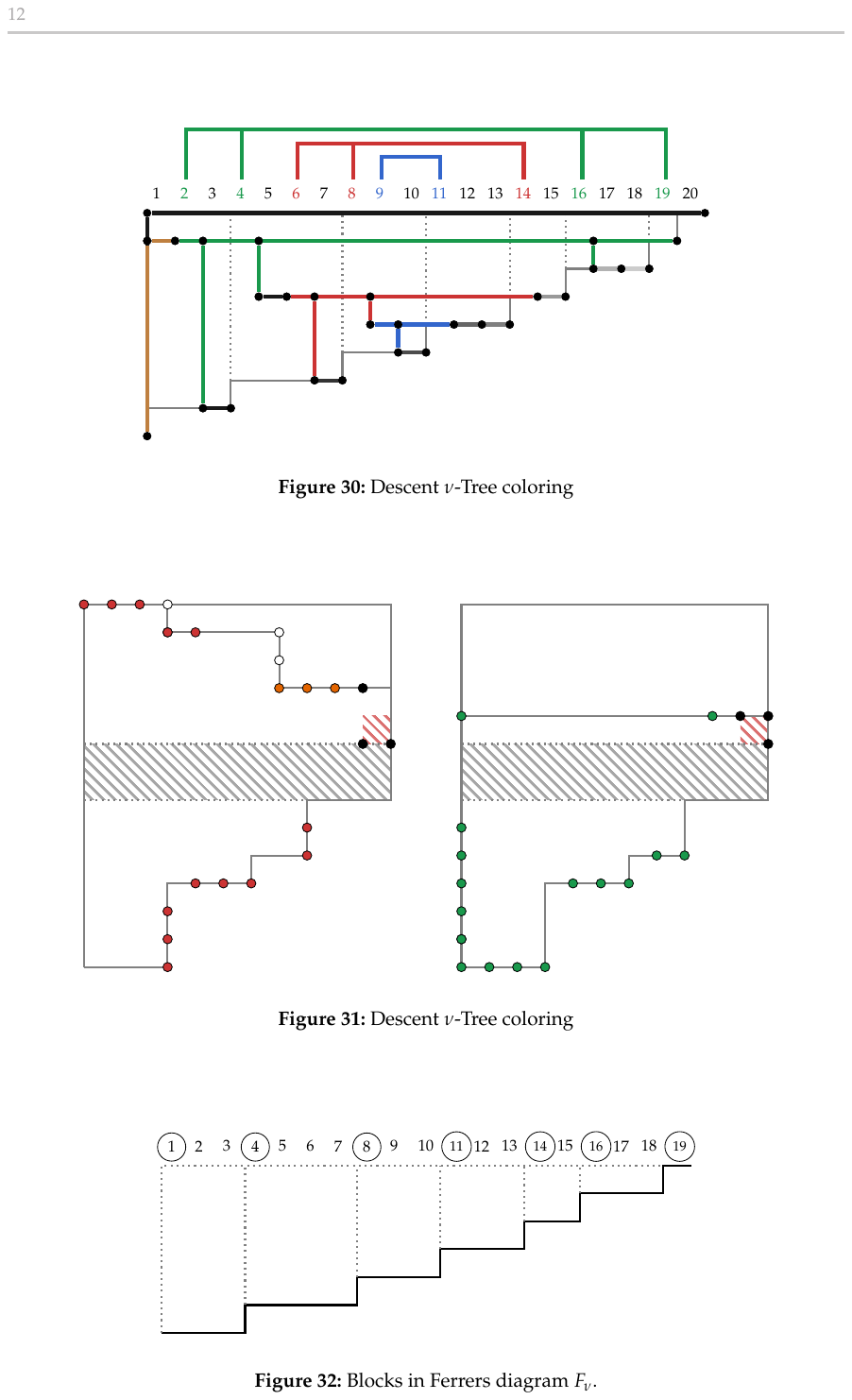} 
    \caption{Left: Shape of $T$ left to shaded box. Right: Shape of~$j$ left to shaded box.}
    \label{Fig_proof_counting}
\end{figure}

\ref{bracket2}: The root $r_B$ of block $B$ is always at the right bottom corner of the shaded box and appears in each of the trees $T$, $T'$, $j$, ${j_\downarrow}$. All nodes southeast to $r_B$ build a subtree of a smaller shape. Since each subtree is a maximal collection of pairwise~$\nu$-compatible elements, $B$ and $B_0$ have the same number of nodes.

\ref{bracket3}: By definition the blocks $A_0$, $B_0$, $C_0$ are characterized by having in each column exactly one node except the first one. Reading this entries in in-order is equal to read the heights of the steps of $\nu$ in order. This are the minimal possible entries. Therefore, we obtain
\[
\min\{\,b(T),\,b(j)\} = (A_0,b,B_0,b+1,C_0)= b({j_\downarrow}),
\]
\[
\min\{\,b(T'),\,b(j)\} = (A_0,b+1,B_0,b+1,C_0)= b(j).
\]
%By  the bracket vectors uniquelly determine the~$\nu$-tree, we conclude\[T \wedge j = j_\downarrow,\quad T'\wedge j = j.\]
\end{proof}
From Lemma~\ref{observation_labeling} and~\ref{edgelabeling}, we deduce the following Corollary~\ref{cor_labeling}.
\begin{corollary}\label{cor_labeling}
    For a cover relation $T\lessdot T'$ in the $\nu$-Tamari lattice, the perspective edge-labeling is given by $j=j(T,T')$.
\end{corollary}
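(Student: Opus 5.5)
This follows from combining Lemma~\ref{observation_labeling} with Lemma~\ref{edgelabeling}. The perspective edge-labeling is $\lambda_{jsd}$. The $\nu$-Tamari lattice is join-semidistributive, being an alt $\nu$-Tamari lattice, and those were observed above to be semidistributive as intervals of a larger Tamari lattice. So for every cover $T\lessdot T'$ the label $\lambda_{jsd}(T,T')$ is a well-defined join-irreducible element.

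Fix a cover $T\lessdot T'$ and let $j=j_{(T,T')}$ be as in Definition~\ref{definition_ji_tree}. This $j$ is join-irreducible by construction: under the bijection of Lemma~\ref{joinirreducible} it is the join-irreducible tree attached to the bottom-right box of the rectangle swept by the rotation. Lemma~\ref{edgelabeling} gives $T\wedge j=j_\downarrow$ and $T'\wedge j=j$, which is exactly condition~\eqref{cond3} of Lemma~\ref{observation_labeling} with $x=T$ and $y=T'$. Since that lemma asserts \eqref{cond3}$\Leftrightarrow$\eqref{cond0}, we get $\lambda_{jsd}(T,T')=j$.

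The only points needing care are that the hypotheses of Lemma~\ref{observation_labeling} hold: $L$ must be join-semidistributive and $j$ must be join-irreducible with $j_\downarrow$ its unique lower cover. The first is the semidistributivity noted above. The second is built into Definition~\ref{definition_ji_tree} together with Lemma~\ref{joinirreducible}, whose proof shows that a tree with a single descent is join-irreducible. There is no real obstacle, since all the combinatorial work was done in Lemma~\ref{edgelabeling}. One could also note that uniqueness of the label is automatic, because $\lambda_{jsd}$ is defined as a minimum.
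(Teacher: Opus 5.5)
Your proposal is correct and matches the paper's argument: Lemma~\ref{edgelabeling} supplies condition~\eqref{cond3} of Lemma~\ref{observation_labeling} for $x=T$ and $y=T'$, and the equivalence \eqref{cond3}$\Leftrightarrow$\eqref{cond0} then gives $\lambda_{jsd}(T,T')=j_{(T,T')}$. Your explicit check of the hypotheses (that the lattice is join-semidistributive and that $j_{(T,T')}$ is join-irreducible) is a welcome addition, since the paper leaves both implicit.
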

\begin{example}
The $\nu$-Tamari lattice for $\nu=(2,3,0)$ with the perspective edge-labeling is illustrated in~\cref{Fig_ex_lattice}.
        \begin{figure}[!h]
    \centering
\includegraphics[width=0.7\textwidth]{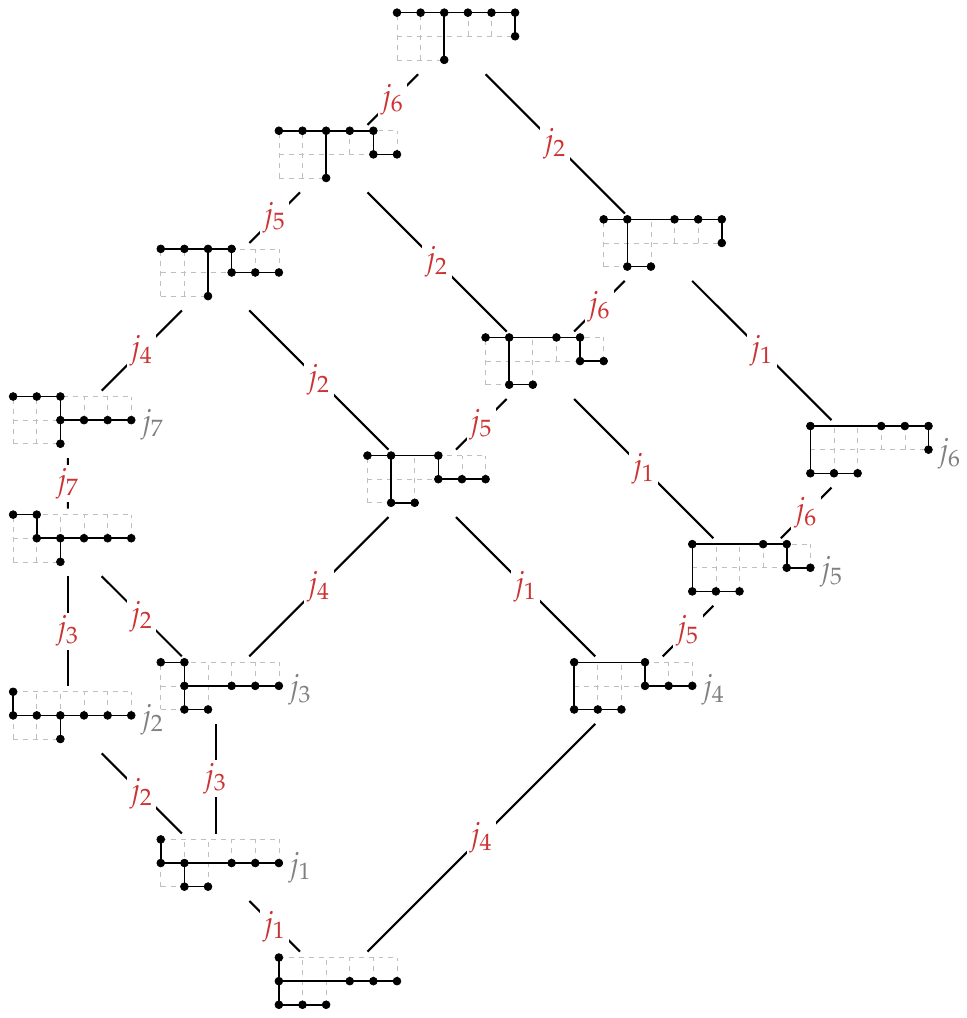} 
    \caption{Alt $\nu$-Tamari lattice for $\nu=(2,3,0)$ and $\delta=(3,0)$.}
    \label{Fig_ex_lattice}
    \end{figure}

\end{example}

%\subsection{The canonical join complex of the alt $\nu$-Tamari lattice}
\subsubsection{The canonical edge-labeling for the alt $\nu$-Tamari lattice}
The alt $\nu$-Tamari lattice is identified as a right interval of the $\nu$-Tamari lattice \cite{CCh2024}. Following Definition~\ref{definition_ji_tree}, the edge labels within a right interval are elements that are join-irreducible in both the $\nu$-Tamari and the alt $\nu$-Tamari lattice. Consequently, the perspective edge-labeling of the $\nu$-Tamari lattice naturally induces a valid perspective edge-labeling on the alt $\nu$-Tamari lattice.
%The alt $\nu$-Tamari lattice is a right interval of the $\nu$-Tamari lattice \cite{CCh2024}. By Definition~\ref{definition_ji_tree}, all the edge labels in an right interval are join irreducible elements that are join irreducible in the $\nu$-Tamari and in the alt $\nu$-Tamari lattice. Therefore, the edge-labeling of the $\nu$-Tamari lattice induces a perspective edge-labeling of the alt $\nu$-Tamari lattice.
\begin{corollary}\label{labeling_altv}
    For a cover relation $T\lessdot T'$ in the alt $\nu$-Tamari lattice, the perspective edge-labeling is given by $j=j_{(T,T')}$.
\end{corollary}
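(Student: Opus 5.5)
Let $I=[T_0,\hat 1]$ denote the alt $\nu$-Tamari lattice, viewed as the right interval of the $\nu$-Tamari lattice $L$ given by \cite{CCh2024}. Let $T\lessdot T'$ be a cover in $I$, and let $j=j_{(T,T')}$ be the join-irreducible $\nu$-tree of Definition~\ref{definition_ji_tree}. The plan is to transfer Lemma~\ref{edgelabeling} from $L$ to $I$ through the characterization in Lemma~\ref{observation_labeling}\eqref{cond3}. Since the order of $I$ is induced from $L$ and $I$ is an interval, covers in $I$ are covers in $L$. By Corollary~\ref{cor_labeling}, in $L$ we have
\[ T\wedge_L j=j_\downarrow,\qquad T'\wedge_L j=j. \]
The whole task is to find the correct join-irreducible element of $I$ and to check that these two meet identities survive in $I$.

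\textbf{The candidate label.} The element $j$ itself usually does not lie in $I$, so I would use $\tilde j:=j\vee T_0$ (join taken in $L$) and its would-be lower cover $\tilde j_\downarrow:=j_\downarrow\vee T_0$. Geometrically, $\tilde j$ should be the join-irreducible $(\delta,\nu)$-tree attached by Lemma~\ref{joinirreducible} to the same box: the bottom-right box of the rectangle rotated in $T\lessdot T'$. This is what it means for $j_{(T,T')}$ to be read as an element of the alt lattice, which is the sense of the statement.

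\textbf{Step 1: describe $\tilde j$ and $\tilde j_\downarrow$ by bracket vectors.} Joins are not componentwise in bracket vectors, so Proposition~\ref{meet_bracketvectors} does not apply here. Instead I would show directly that the $(\delta,\nu)$-tree $\tilde j$ for that box is the smallest tree of $I$ lying above $j$. Using the block decomposition $(A_0,b+1,B_0,b+1,C_0)$ from the proof of Lemma~\ref{edgelabeling}, the minimal blocks get replaced by the blocks of $T_0$, while the two distinguished entries stay the same. I would also show that $\tilde j$ has exactly one descent, so it is join-irreducible in $I$, and that its unique lower cover in $I$ is $j_\downarrow\vee T_0$.

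\textbf{Step 2: verify Lemma~\ref{observation_labeling}\eqref{cond3} in $I$.} Meets in an interval agree with meets in $L$. Since $T,T'\ge T_0$, I need
\[ T\wedge \tilde j=\tilde j_\downarrow,\qquad T'\wedge \tilde j=\tilde j. \]
The second identity is just $\tilde j\le T'$. This follows from $j\le T'$ and $T_0\le T'$.

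For the first identity, $T\wedge\tilde j\ge T_0\vee(T\wedge j)\ge j_\downarrow\vee T_0$ holds trivially. The reverse inequality $T\wedge\tilde j\le\tilde j_\downarrow$ is the crux. It says $\tilde j\not\le T$, which holds because the entry at the rotated position $b$ stays below $b+1$. Then $T\wedge\tilde j$ is strictly below $\tilde j$ and hence below its unique lower cover $\tilde j_\downarrow$, by join-irreducibility in $I$.

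So the real obstacle is Step 1: proving that $j\vee T_0$ is join-irreducible in $I$ with lower cover $j_\downarrow\vee T_0$. I expect to handle it by the geometric description of trees with a unique descent from the proof of Lemma~\ref{joinirreducible}, compared block by block with the proof of Lemma~\ref{edgelabeling}\ref{bracket1}--\ref{bracket3}. Once Step 1 is in place, Lemma~\ref{observation_labeling} gives $\lambda_{jsd}(T,T')=\tilde j$, which is the box label $j_{(T,T')}$.
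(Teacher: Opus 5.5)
Your candidate $\tilde j=j\vee T_0$ is the right object, and you are right that $j$ itself need not lie in the interval. The paper's two-line argument, which asserts that these labels are join-irreducible in both lattices, glosses over exactly this.

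\textbf{The gap.} The second half of Step 1 is false in general: the unique lower cover of $\tilde j$ in $I$ need not be $j_\downarrow\vee T_0$. Your squeeze in Step 2 depends on it, since you only obtain $j_\downarrow\vee T_0\le T\wedge\tilde j\le(\text{lower cover of }\tilde j\text{ in }I)$.

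Concretely, take $(\nu_0,\nu_1,\nu_2)=(0,2,0)$ and $\delta=(1,0)$. Then $\check\nu=ENEN$, and $L_{\delta,\nu}$ consists of all lattice points of $F_{\check\nu}$ except $(0,0)$.
\begin{itemize}
\item The five $\check\nu$-trees all contain $(1,0),(2,1),(0,2)$. They are completed by $\{(0,0),(0,1)\}=\hat{0}$, $\{(0,1),(1,1)\}=a$, $\{(1,1),(1,2)\}=b$, $\{(0,0),(2,2)\}=c$ and $\{(1,2),(2,2)\}=\hat{1}$.
\item They form the pentagon $\hat{0}<a<b<\hat{1}$, $\hat{0}<c<\hat{1}$. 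The interval is $I=[a,\hat{1}]=\{a<b<\hat{1}\}$, so $T_0=a$.
\item For the cover $b\lessdot\hat{1}$ of $I$, the rotated rectangle is the single box $[1,2]\times[1,2]$. Hence $j=c$, $j_\downarrow=\hat{0}$, $\tilde j=c\vee a=\hat{1}$, and $j_\downarrow\vee T_0=a$.
\item But the lower cover of $\hat{1}$ in $I$ is $b$, and $T\wedge\tilde j=b\neq a$.
\end{itemize}
So the identity $T\wedge\tilde j=j_\downarrow\vee T_0$ that Step 2 aims for fails, and no block-by-block bracket-vector comparison can prove Step 1 as stated. This is the usual pentagon phenomenon: $j\vee x$ can be join-irreducible in $[x,\hat{1}]$ with a lower cover strictly above $j_\downarrow\vee x$.

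\textbf{The repair.} The statement itself survives, and the fix removes most of Step 1. Use condition \eqref{cond1} of Lemma~\ref{observation_labeling} instead of \eqref{cond3}.
\begin{itemize}
\item If $c\in I$ and $T\vee c=T'$, then $c\ge j$, because $j$ is the minimum of $\{c\in L\mid T\vee c=T'\}$ over all of $L$. Also $c\ge T_0$, so $c\ge j\vee T_0$.
\item Conversely, $T\vee(j\vee T_0)=T'\vee T_0=T'$.
\item Hence the label of $T\lessdot T'$ in $I$ is $j\vee T_0$. No information about lower covers is needed, and join-irreducibility in $I$ comes for free.
\end{itemize}
It remains to identify $j\vee T_0$ with the $(\delta,\nu)$-tree $\tilde j_\beta$ of the box $\beta$. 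Apply the same formula to the unique down-cover of $\tilde j_\beta$.
\begin{itemize}
\item That cover is the rotation at the top-right corner of $\beta$. Its rectangle has bottom-right box $\beta$, because the bottom-right corner of $\beta$ is a node (proof of Lemma~\ref{joinirreducible}).
\item Its label in $L$ is therefore $j_\beta$, by Corollary~\ref{cor_labeling}.
\item Its label in $I$ is $\tilde j_\beta$ itself, as for the down-edge of any join-irreducible element.
\end{itemize}
Hence $\tilde j_\beta=j_\beta\vee T_0$, which is exactly what the corollary needs.
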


\subsection{Proof of \texorpdfstring{\Cref{realization}}{Theorem 1}} 
Throughout this section, let $\Delta_{(\delta,\nu)}$ denote the box complex of shape $F_{\delta,\nu}$. We establish that the canonical join complex of the alt $\nu$-Tamari lattice $\altTam{\nu}{\delta}$ is isomorphic to $\Delta_{(\delta,\nu)}$. This isomorphism is illustrated in \Cref{fig_comparison_complexes}.

%Fig_ex_alt_lattice
\begin{figure}[h!]
  \centering
  \begin{subfigure}[b]{0.3\textwidth}
    \centering
    \resizebox{1\linewidth}{!}{\includegraphics[page=1]{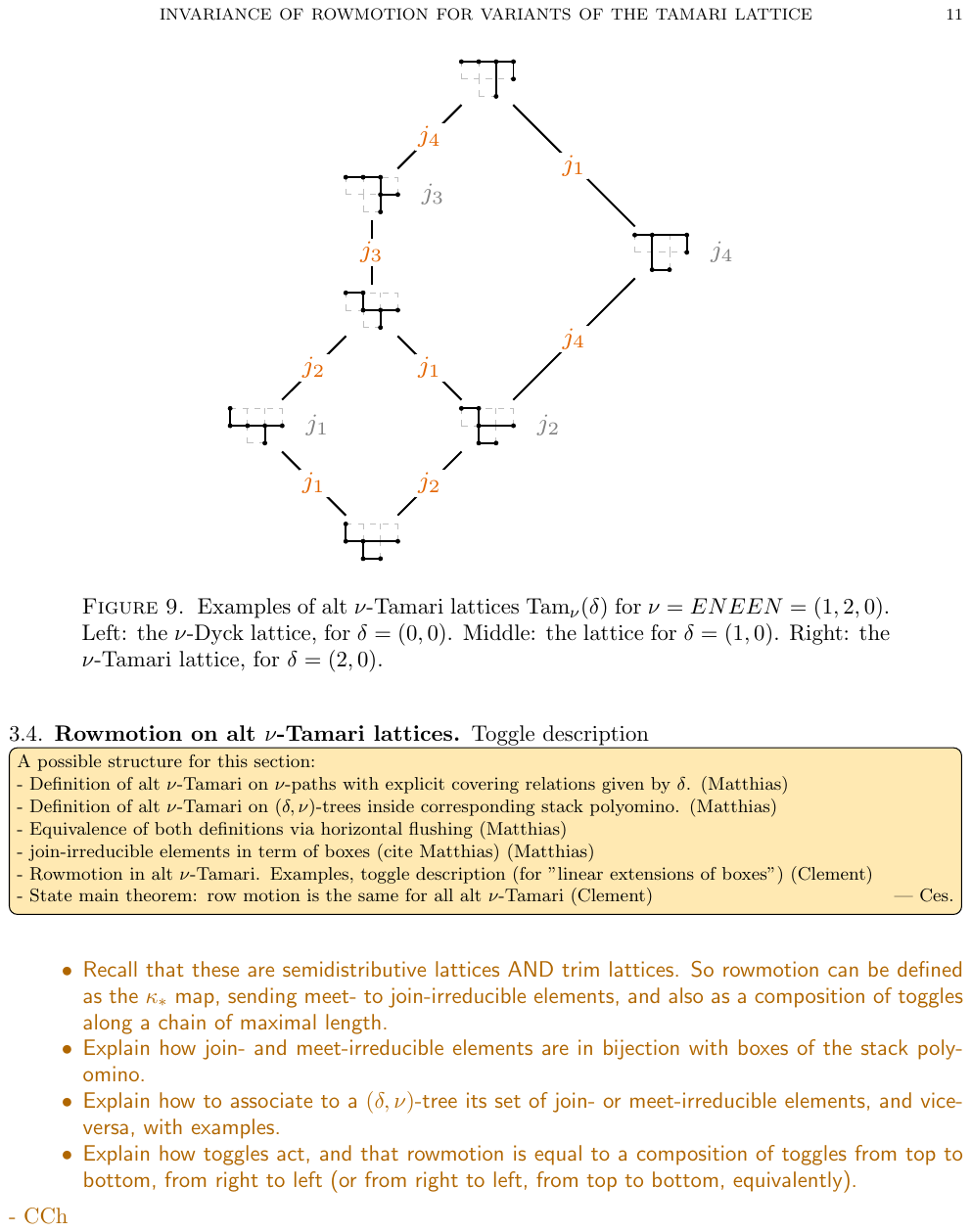}}
    \caption{Alt $\nu$-Tamari lattice for  $\nu=(1,2,0)$ and $\delta=(1,0)$}
    \label{Fig_complex_altv}
  \end{subfigure}\hfill
  \begin{subfigure}[b]{0.3\textwidth}
    \centering
    \resizebox{.8\linewidth}{!}{\includegraphics[page=1]{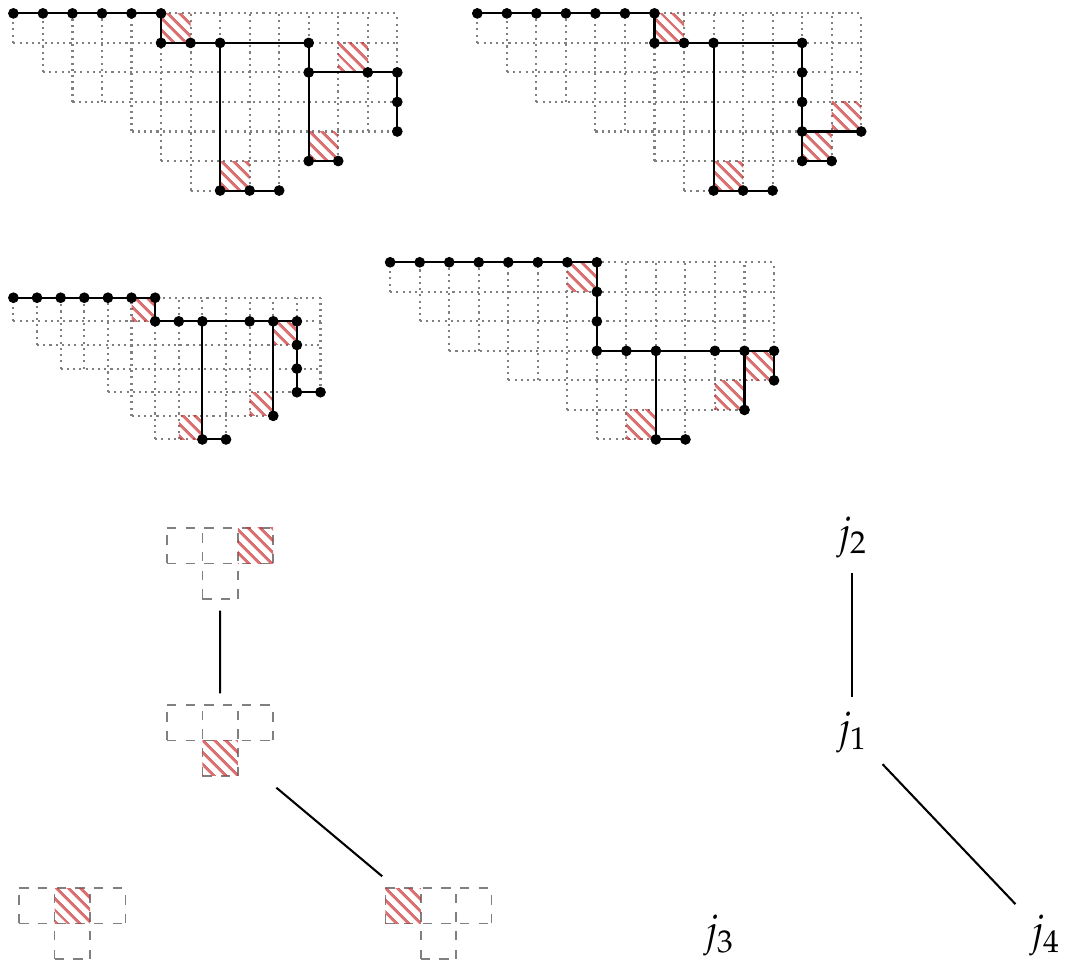}}
    \caption{Canonical join complex of \Cref{Fig_complex_altv}}
    \label{fig:sub:two}
  \end{subfigure}
\hfill
  \begin{subfigure}[b]{0.3\textwidth}
    \centering
    \resizebox{1\linewidth}{!}{\includegraphics[page=1]{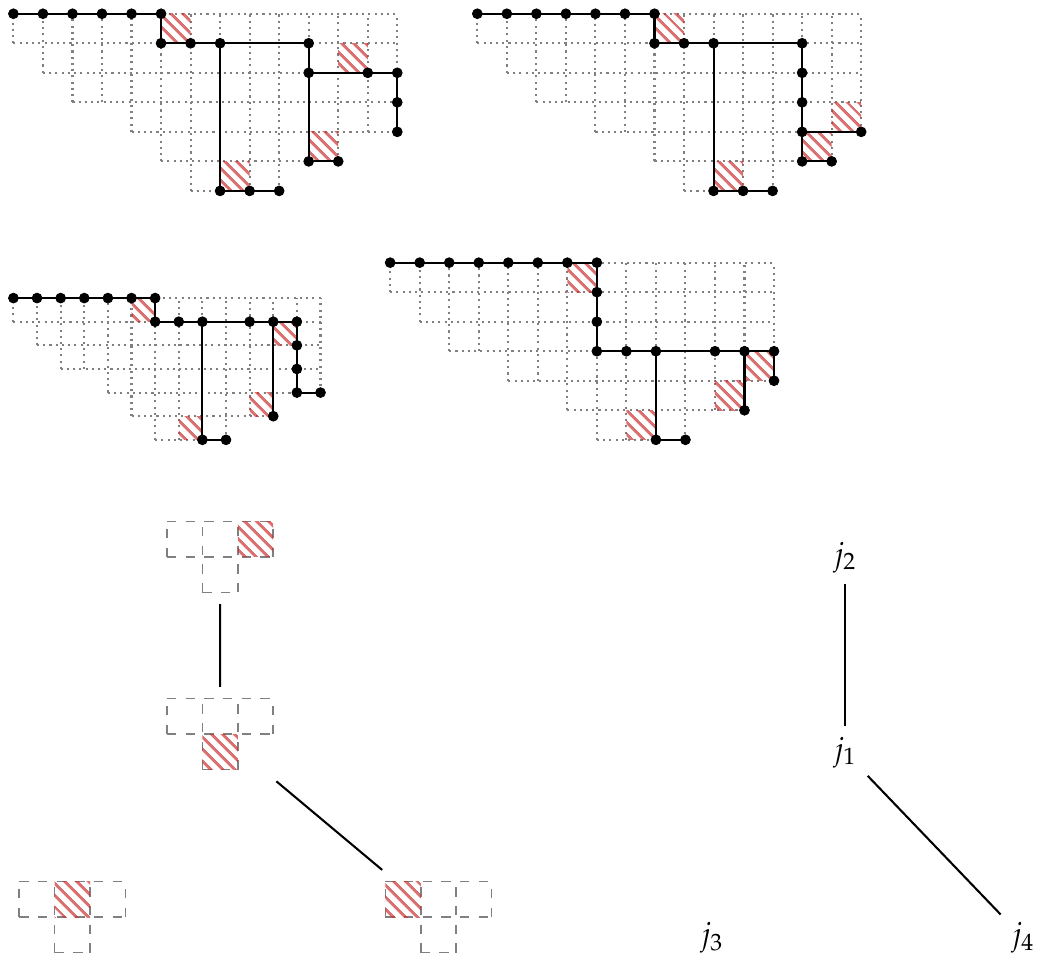}}
    \caption{Box complex for \cref{Fig_complex_altv}}
    \label{Fig_complex_boxes}
  \end{subfigure}
  \caption{The alt $\nu$-Tamari lattice, its canonical join complex, and the box complex for the shape $F_{(1,2,1)}$.}
  \label{fig_comparison_complexes}
\end{figure}

\begin{definition}
    Let $\tau$ be the map from the canonical join complex of $\altTam{\nu}{\delta}$ to the box complex $\Delta_{(\delta,\nu)}$ defined by
    $$\tau:\text{canonical join complex of $\altTam{\nu}{\delta}$} \longrightarrow \text{box complex $\Delta_{(\delta,\nu)}$},$$
    where for a face $F$ of the canonical join complex, $\tau(F)$ is the collection of boxes corresponding to the vertices of $F$ under the bijection established in Lemma~\ref{joinirreducible}.
\end{definition}
\begin{lemma}
    Let $F$ be a face of the canonical join complex of the alt $\nu$-Tamari lattice~$\altTam{\nu}{\delta}$. Then $\tau(F)$ is a face of the box complex $\Delta_{(\delta,\nu)}$. Thus, $\tau$ is well-defined.
\end{lemma}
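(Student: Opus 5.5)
The plan is to show that any two canonical joinands of the same element give pairwise compatible boxes. Faces of the box complex are exactly the sets of pairwise compatible boxes, so this suffices. Let $F=\mathrm{Can}(T')$ for some $(\delta,\nu)$-tree $T'$. By \cite[Lemma 19(1)]{barnard2019} and Corollary~\ref{labeling_altv}, $F=\{j_{(T,T')} \mid T\lessdot T'\}$. The lower covers $T\lessdot T'$ correspond to the descents of $T'$, i.e.\ the nodes to which a left rotation applies. So it suffices to take two distinct descents $q_1,q_2$ of $T'$ and show that the associated boxes $a_1,a_2$ are compatible in $F_{\delta,\nu}$.

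First I would make the boxes explicit. A descent $q$ of $T'$ has a node $p$ of $T'$ directly west and a node $r$ directly south, which are its neighbours in the rotation. Rotating $q$ to the left replaces it by the opposite corner of the rectangle spanned by $p$ and $r$. By Definition~\ref{definition_ji_tree}, the box $a$ is the bottom-right box of this rectangle, i.e.\ the box whose top-right corner is $q$ and which has $r$ on its right edge. This matches the description in Lemma~\ref{joinirreducible}, where the descent is the top-right corner of the box.

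Next I would rule out each form of incompatibility in turn.

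\emph{Same column.} The two descents would lie on the same vertical line. The lower one, say $q_2$, must then be the node directly south of $q_1$, or lie below it. Each descent is the top-right corner of its box and needs its own south neighbour. A short check should show that the two box positions differ, but that they cannot both be unit boxes sitting at descents in the same column unless some box lies inside the other's rotation rectangle. The emptiness condition on rotation rectangles in Definition~\ref{def_altvtamari} should exclude this.

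\emph{Same row.} This case is symmetric, using the west neighbours instead of the south ones.

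\emph{Strict SW/NE.} This is the main case. Suppose $a_1$ is strictly southwest of $a_2$ and the bounding rectangle of the two boxes lies in $F_{\delta,\nu}$. Then the top-right corner $q_1$ of $a_1$ and a suitable corner of $a_2$ are nodes of $T'$. I would try to show that some pair of nodes of $T'$ becomes $\check\nu$-incompatible. The natural candidates are $q_1$ and the south neighbour $r_2$ of $q_2$, or the west neighbour $p_2$ of $q_2$ and $q_1$. One of these should be strictly SW/NE of the other, with bounding rectangle inside the box rectangle and hence inside $F_{\check\nu}$. That contradicts $T'$ being a $(\delta,\nu)$-tree.

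I expect this last case to be the obstacle. The degenerate situations need care: the boxes may share a row or column line, or $q_1$ may coincide with $p_2$ or $r_2$, and in these situations strictness fails. I would handle them by falling back on the rotation-rectangle emptiness condition, since then one descent's node lies in the other's rectangle. I would also check that "bounding rectangle within $F_{\delta,\nu}$" implies containment in $F_{\check\nu}$, which holds because $F_{\delta,\nu}\subseteq F_{\check\nu}$.
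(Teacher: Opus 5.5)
\paragraph{Overall strategy.} Your strategy is the paper's: identify the canonical joinands of $T'$ with the descents of $T'$, then for each type of incompatibility produce two $\check\nu$-incompatible nodes of $T'$.

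\paragraph{The gap.} The execution rests on a wrong description of the box attached to a descent. You correctly say that the label of the cover at the descent $q$ is the bottom-right unit box of the rectangle spanned by the west neighbour $p$ and the south neighbour $r$. That box has $r$ as its \emph{bottom-right} corner. Its top-right corner is one unit above $r$, so it equals $q$ only when $q$ sits exactly one unit above $r$. Lemma~\ref{joinirreducible} places the descent of the join-irreducible tree $j$ itself at the top-right corner of the box, but that descent is one unit above $r$, not at $q$.

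With your description, the claim you set out to prove is false. Take $\nu=NENENE$ and $\delta=(1,1,1)$, which gives the pentagon; the lattice points are the $(x,y)$ with $0\le x\le y\le 3$. The top tree is $T'=\{(0,0),(1,1),(2,2),(3,3),(0,3),(1,3),(2,3)\}$. Its two descents $(1,3)$ and $(2,3)$ lie in the same row, and the south neighbour of $(1,3)$ is $(1,1)$. The canonical joinands of $T'$ correspond to the compatible boxes $[0,1]\times[1,2]$ and $[1,2]\times[2,3]$. Your rule instead gives $[0,1]\times[2,3]$ and $[1,2]\times[2,3]$, which share a row.

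This has three consequences for your case analysis.
\begin{itemize}
\item The ``same row'' case is not symmetric to the ``same column'' case. The row of the box is determined by $r$, not by $p$ or $q$.
\item In the main case, $q_1$ need not be a corner of $a_1$, so your candidate pairs need not give an incompatible pair.
\item Your same-column sketch is not yet an argument. Emptiness of rotation rectangles alone gives no contradiction; you need $\check\nu$-compatibility.
\end{itemize}

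\paragraph{The fix.} Use the correct geometry. Each box $a_i$ has bottom-right corner $r_i\in T'$. The descent $q_i$ is the next node of $T'$ above $r_i$ in that column, and $p_i$ is its west neighbour.
\begin{itemize}
\item If $a_1$ is strictly southwest of $a_2$ with bounding rectangle in $F_{\delta,\nu}$: the nodes $r_1$ and $r_2$ (or $r_1$ and $q_2$) are strictly SW/NE. The southeast corner of their bounding rectangle is the bottom-right corner of a box of $F_{\delta,\nu}\subseteq F_{\check\nu}$.
\item If $a_1,a_2$ share a row with $a_1$ on the left: take $r_1$ and $q_2$. The southeast corner of their bounding rectangle is $r_2$.
\item If $a_1,a_2$ share a column with $a_1$ below: since $r_2$ is a node above $r_1$, the descent $q_1$ lies at height at most that of $r_2$. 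Hence $p_1$ is strictly southwest of $q_2$, and the southeast corner of their bounding rectangle is $q_1$.
\end{itemize}
In every case two nodes of $T'$ are $\check\nu$-incompatible, contradicting that $T'$ is a $(\delta,\nu)$-tree. This is exactly the paper's argument.
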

\begin{proof}
Let $F$ be a face of the canonical join complex of~$\altTam{\nu}{\delta}$. We consider the collection of boxes associated to the vertices of $F$. We show that this collection of boxes $\tau(F)$ is pairwise in compatible position. Since $F$ is the canonical join representation of a $(\delta,\nu)$-tree $T$, each vertex of $F$ corresponds to a label $\lambda_{jsd}(T',T)$ of a down cover $T'\lessdot T$. Assume toward a contradiction that we obtain two boxes~$a$,~$b$ in incompatible position, as in~\Cref{Fig_contradiction_v_comp}.
    \begin{figure}[!h]
    \centering
\includegraphics[width=0.7\textwidth]{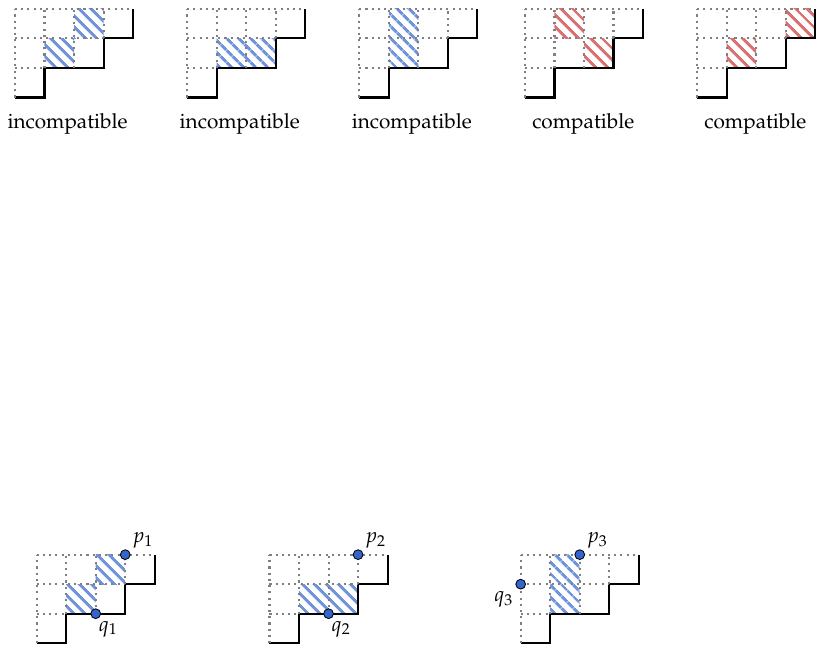} 
    \caption{Pairs of incompatible boxes.}
    \label{Fig_contradiction_v_comp}
    \end{figure}

\begin{enumerate}
\item If \(a\) lies northeast or southwest of \(b\) and the minimal rectangle containing~\(a\) and \(b\) is contained in the shape (respectively, if \(a\) and \(b\) are horizontally aligned), then there exists a node \(q_1\) (respectively, \(q_2\)) at the lower-right corner of the left box and a node \(p_1\) (respectively, \(p_2\)) located above the lower-right corner of the right box. In either case, the smallest rectangle containing the nodes \(p_1\) and \(q_1\) (respectively, \(p_2\) and \(q_2\)) is inside the shape, which contradicts the construction of a \((\delta,\nu)\)-tree. See \Cref{Fig_contradiction_v_comp} (left and middle).

\item If \(a\) and \(b\) are vertically aligned, then there exists a node \(p_3\) above the lower-right corner of the boxes. Since the lower box is shaded, there is also a node \(q_3\) at some horizontal level between $a$ and $b$. Consequently, the minimal rectangle containing \(p_3\) and \(q_3\) lies entirely inside the shape, which contradicts the construction of a \((\delta,\nu)\)-tree. See \Cref{Fig_contradiction_v_comp} (right).

\end{enumerate}
\end{proof}

\begin{definition}
    Let $\theta$ be the map from the box complex $\Delta_{(\delta,\nu)}$ to the canonical join complex of $\altTam{\nu}{\delta}$ defined by
    $$\theta : \text{box complex $\Delta_{(\delta,\nu)}$}\longrightarrow \text{canonical join complex of $\altTam{\nu}{\delta}$},$$
    where for any face $F$ of the box complex, $\theta(F)$ is the set of join irreducible elements corresponding to the vertices of $F$ under the bijection established in Lemma~\ref{joinirreducible}.
\end{definition}

\begin{lemma}
    Let $F$ be a face of the box complex $\Delta_{(\delta,\nu)}$. Then $\theta(F)$ is a face of the canonical join complex of $\altTam{\nu}{\delta}$. Thus, $\theta$ is well-defined.
\end{lemma}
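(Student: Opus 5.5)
Given a face $F=\{a_1,\dots,a_k\}$ of pairwise compatible boxes of $F_{\delta,\nu}$, I will construct a $(\delta,\nu)$-tree $T_F$ whose lower covers are exactly $k$ in number, with labels $a_1,\dots,a_k$. By Corollary~\ref{labeling_altv} and the characterization $\mathrm{Can}(T)=\{\lambda_{jsd}(T',T)\mid T'\lessdot T\}$, this shows $\theta(F)=\mathrm{Can}(T_F)$ is a face. Because of how the labels are described, this amounts to building a tree with exactly one descent per box $a_i$, sitting at the top-right corner of $a_i$. The left rotation at that descent must sweep a rectangle whose bottom-right box is $a_i$. Concretely, I want $T_F$ to contain, for each $i$, the top-right corner $p_i$ of $a_i$ and the bottom-right corner $q_i$ of $a_i$. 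In addition, $T_F$ needs a node west of $p_i$ in the row of $p_i$ (a left neighbour). These are exactly the configurations used for a single box in Lemma~\ref{joinirreducible}.

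The construction proceeds in three steps. First, let $S_F$ be the point set consisting of $p_i,q_i$ for all $i$, the root, and the left-most column points required for each $p_i$ to have a west neighbour. Check that $S_F$ is pairwise $\check\nu$-compatible. This is a case analysis on the relative position of two compatible boxes $a,b$, mirroring the cases of Figure~\ref{Fig_contradiction_v_comp} run backwards:
\begin{itemize}
\item[(i)] $a$ is northwest or southeast of $b$;
\item[(ii)] $a$ is northeast or southwest of $b$, but their bounding rectangle leaves the shape.
\end{itemize}
Same row or column is excluded by compatibility. In each case, the corners $p_a,q_a,p_b,q_b$ either share a row or column, or span a rectangle that is not inside $F_{\check\nu}$. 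I would use the unimodality of $u$ here: a rectangle spanned by corners leaves $F_u$ exactly when the box rectangle does.

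Second, I extend $S_F$ to a maximal compatible set $T_F$ in a controlled way that creates no additional descents. I take the greedy completion that, in every column not containing some $p_i$, adds nodes as low as possible ("minimal" entries, as in part~(iii) of the proof of Lemma~\ref{edgelabeling}). Equivalently, $T_F$ is the join $\bigvee_i j_{a_i}$, and I would try to verify that this join coincides with the explicit tree. Using bracket vectors, the join is the smallest bracket vector dominating all $b(j_{a_i})$. Since the $b(j_{a_i})$ differ from the minimal vector only in the blocks attached to distinct compatible boxes, this candidate is the entrywise maximum, provided that maximum is a valid bracket vector.

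Third, I count descents. A node is a descent iff it has a node to its south and to its west. In $T_F$, the nodes $p_i$ are descents whose rotation rectangle has bottom-right box $a_i$, using $q_i$ and the minimality of the filler nodes. No other node is a descent, because filler nodes are bottom-most in their column.

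The main obstacle is the second step: showing that the greedy completion is both maximal and free of spurious descents. The issue is boxes in the same "bounding rectangle leaves the shape" configuration near the peak of the unimodal shape. There, a filler node may be forced to sit above another node, which would create an extra descent. If the direct argument fails, I would fall back on induction on $|F|$. Remove the box $a$ of $F$ that is lowest-rightmost. Take $T_{F\setminus a}$, and show that joining it with $j_a$ adds exactly one new descent, verified by Lemma~\ref{observation_labeling}\eqref{cond3} through componentwise minima of bracket vectors. Since $T_F=\bigvee \theta(F)$ is then an element whose set of lower-cover labels is $\theta(F)$, the canonical join representation is $\theta(F)$.
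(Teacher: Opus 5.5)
\textbf{There is a genuine gap: the tree your construction aims for does not exist for some faces, so Step~1 already fails.}

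Your overall plan matches the paper's: exhibit one $(\delta,\nu)$-tree whose lower covers are labelled exactly by $\theta(F)$. The problem is the concrete target you fix.

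By Definition~\ref{definition_ji_tree}, the label of a cover is the \emph{bottom-right} box of the rotation rectangle, and that rectangle can be several rows tall. So in a tree $T$ with $\mathrm{Can}(T)=\theta(F)$, the descent carrying label $a_i$ is in general \emph{not} the top-right corner $p_i$ of $a_i$. What is actually forced is weaker:
\begin{itemize}
\item the bottom-right corner $q_i$ of $a_i$ is a node;
\item the descent is the next node above $q_i$ in that column, at or above the top edge of $a_i$;
\item that descent has a west neighbour.
\end{itemize}

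Correspondingly, your case~(ii) claim that $p_a,q_a,p_b,q_b$ are pairwise $\check\nu$-compatible is false. Box compatibility is tested at the row of $a$, i.e.\ at the bottom edge of $a$. The rectangle spanned by $p_a$ and $q_b$ is tested one level higher. The two tests differ as soon as $\check\nu$ has an east step at the height of the top edge of $a$ and $b$ lies at least two rows higher.

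\emph{Counterexample.} Take $\Tam{4}$, i.e.\ $\nu=\delta=ENENEN$ with shape $F_{(3,2,1)}$. Use lattice points $(x,y)$ with $0\le y\le 3$ and $0\le x\le\min(y+1,3)$.
\begin{itemize}
\item Let $a=[0,1]\times[0,1]$ and $b=[1,2]\times[2,3]$. Their bounding rectangle contains the missing box $[1,2]\times[0,1]$, so they are compatible and $\{a,b\}$ is an edge of $\Delta_{(3,2,1)}$.
\item However, $p_a=(1,1)$ and $q_b=(2,2)$ span the box $[1,2]\times[1,2]$, which lies in the shape. So no tree contains both points. Your proposed west neighbour $(0,1)$ of $p_a$ is also incompatible with $q_b$.
\item The tree that does realize $\{a,b\}$ is $\{(0,3),(1,3),(2,3),(2,2),(2,1),(1,0),(3,2)\}$. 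Its descents are $(2,3)$, with rectangle $[1,2]\times[2,3]$ and label $b$, and $(1,3)$, with rectangle $[0,1]\times[0,3]$ whose bottom-right box is $a$.
\end{itemize}

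Your Step~2 shortcut also fails on this example. With in-order traversal, south subtree first:
\begin{itemize}
\item $b(j_a)=(1,0,1,1,2,2,3)$ and $b(j_b)=(0,0,3,1,2,2,3)$;
\item their entrywise maximum $(1,0,3,1,2,2,3)$ is not a bracket vector;
\item the actual join has $b(j_a\vee j_b)=(3,0,3,1,2,2,3)$.
\end{itemize}
Proposition~\ref{meet_bracketvectors} only describes meets, not joins.

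\textbf{What is missing} is the flexibility built into the paper's construction. Nodes are placed right to left and bottom to top. Each shaded box gets a node at its bottom-right corner, and placement then continues upward compatibly until exactly one node sits at or above the top edge of that box. That node, not $p_i$, is the descent whose rotation rectangle has bottom-right box $a_i$. Its height is forced by the nodes already placed to its right, as $(1,3)$ is forced by $(2,2)$ and $(2,3)$ above.

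Your inductive fallback (join $T_{F\setminus a}$ with $j_a$ and check that exactly one descent is added) could only work if the new descent is allowed to sit strictly above the top-right corner of $a$. As stated, it restates what must be proved rather than proving it.
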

\begin{proof}
Let $F$ be a face of the box complex $\Delta_{(\delta,\nu)}$. Its vertices, $\{v_1, \dots, v_\ell\}$, form a collection of pairwise compatible boxes in $F_{\delta,\nu}$. We refer to them as shaded boxes. According to Lemma~\ref{joinirreducible}, we identify these vertices with the set of join-irreducible~$(\delta,\nu)$-trees $\{T_1, \dots, T_\ell\}$, where each tree $T_i$ possesses a unique descent corresponding to the box $v_i$.

To show that \(\{T_{1},\dots,T_{\ell}\}\) forms a face of the canonical join complex, we construct a \((\delta,\nu)\)-tree \(T\) as follows:
Place nodes in the shape from right to left and from bottom to top using the following rule: 
%Place a node as bottom-most as possible. 
If the box northwest to it is not contained in the shape (resp. a box in the column left to it is shaded) continue placing nodes in a compatible way upward as far as we have placed a node (resp. a node at least at the top horizontal line of the shaded box).

Since the boxes $\{v_1, \dots, v_\ell\}$ are pairwise compatible, we have placed a node at each right bottom corner of a shaded box. Following our construction, there is exactly one node placed above that node. This is because we could place at least one node in the top horizontal line of the shape. Note, if the box northwest to a node is not inside the shape, we do not create a descent corresponding to that box by proceeding placing nodes upwards. Hence each descent of the constructed tree~$T$ corresponds to a shaded boxes.

    Therefore, this construction yields a valid \((\delta,\nu)\)-tree $T$, with down covers labeled by~\(\{T_{1},\dots,T_{\ell}\}\). Hence, \(\{T_{1},\dots,T_{\ell}\}\) is the canonical join decomposition of the constructed $(\delta,\nu)$-tree $T$. This completes the proof.\end{proof}

\begin{example}
Two examples of the construction of a \((\delta,\nu)\)-tree from a given set of pairwise compatible boxes are shown in \Cref{Fig_boxes_tree}.
    \begin{figure}[!h]
    \centering
    \includegraphics[width=.6\textwidth]{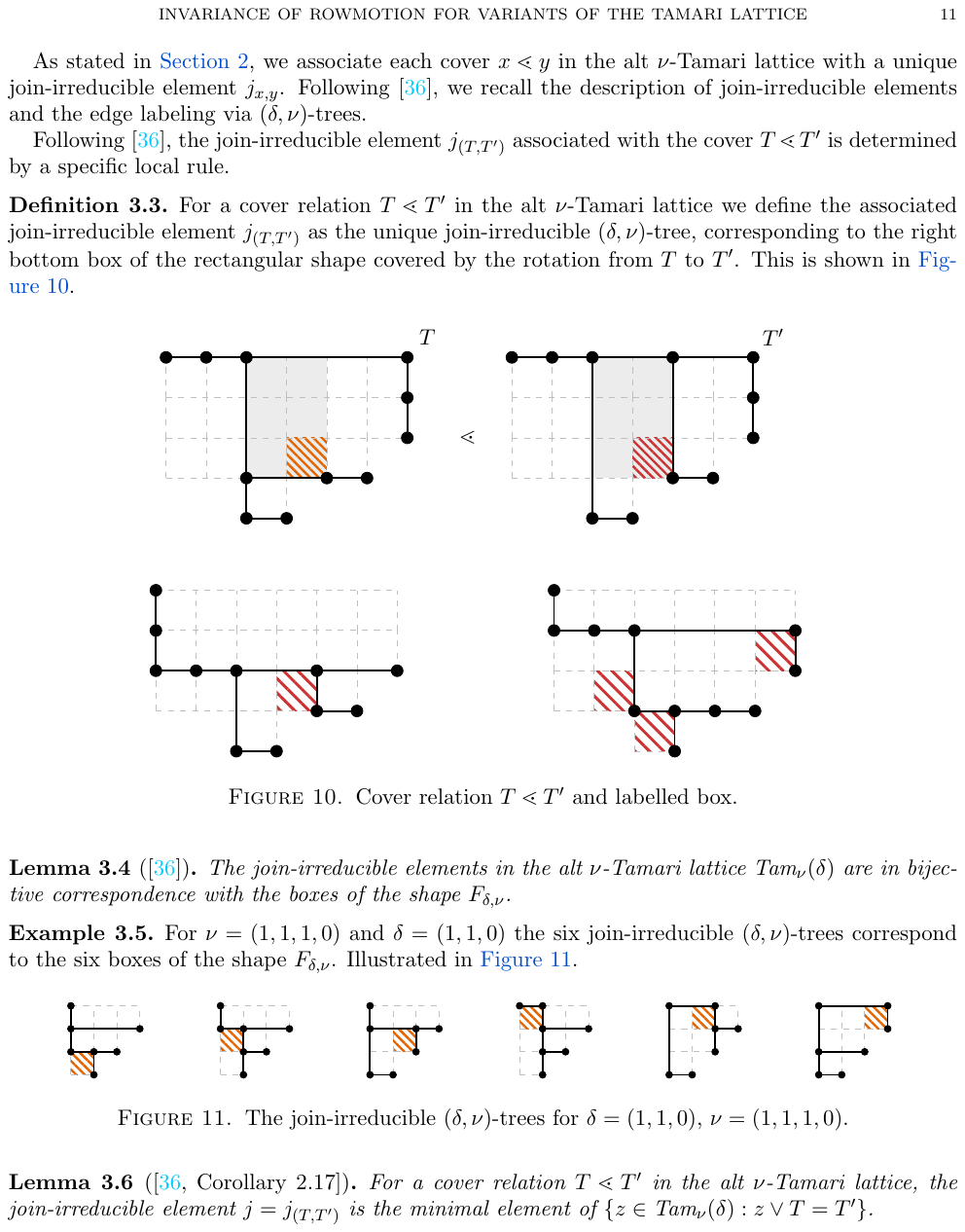} 
    \caption{Two examples of the construction of a \((\delta,\nu)\)-tree from a collection of pairwise compatible boxes.}
    \label{Fig_boxes_tree}
    \end{figure}

%\begin{figure}[!h]
%    \centering
%    \includegraphics[width=.9\textwidth]{figures/Fig_boxes_treealt.pdf} 
%    \caption{Two examples of the construction of a \((\delta,\nu)\)-tree from a collection of pairwise compatible boxes.}
%    \label{Fig_boxes_treealt}
%    \end{figure}
\end{example}

\begin{proof}[Proof of \Cref{realization}]
We show the maps $\tau$, $\theta$ are inverse to each other
    $$\theta \circ \tau=\text{id}_c,\qquad \qquad \qquad  \tau \circ \theta = \text{id}_b,$$ where $\text{id}_c$ is the identity map on the canonical join complex of $\altTam{\nu}{\delta}$ and $\text{id}_b$ the identity on the box complex $\Delta_{(\delta,\nu)}$.
Let $\textbf{c}$ be a face of the canonical join complex and $\textbf{b}$ a face of the box complex.
$\theta ( \tau (\textbf{c}))=\textbf{c}$: Let $T = \bigvee \mathbf{c}$. The boxes utilized in the construction of $\theta$ are precisely the labels of the down-covers of $T$ which, by construction, coincide with the vertices of the face $\tau(\mathbf{c})$ in the box complex.
$\tau ( \theta(\textbf{b}))=\textbf{b}$: The $(\delta,\nu)$-tree $\bigvee \theta(\mathbf{b})$ possesses a unique descent for each vertex in~$\mathbf{b}$, such that the labels of its down-covers are precisely the vertices of $\textbf{b}$.
\end{proof}

%\begin{remark}
%Following \Cref{realization} and Remark~\ref{remark_iso}~\eqref{remark_iso_2}, the box complex $\Delta_u$ serves as a combinatorial model for the canonical join complexes of several prominent lattices:
%\begin{enumerate}[label=(\roman*)]
%\item Setting $u=(n,n-1,\dots,1)$ recovers the classical Tamari lattice.
%\item Setting $u=(1,2,\dots,n)$ yields the Dyck lattice.
%\item Setting $u=(mn,m(n-1),\dots,m)$ produces the $m$-Tamari lattice, whereas setting $u=(m,2m,\dots,mn)$ yields the $m$-Dyck lattice.
%\item Setting $u=(u_1,\dots,u_n)$ with $u_i=\sum_{j=1}^{n-i+1} \nu_i$ generates the $\nu$-Tamari lattice, corresponding to a finite sequence of unit north and east steps $\nu=NE^{\nu_1} \dots NE^{\nu_n}$.
%\end{enumerate}
%\end{remark}

\section{Vertex decomposability}
%This section establishes the vertex decomposability of the canonical join complex for the alt~$\nu$-Tamari lattice, identified here with its box complex realization, see \Cref{realization}. While our approach follows \cite[Section 4.2]{Barnard2020}, our combinatorial model provides new methods to generalize the classical Tamari case.

This section demonstrates that the canonical join complex of the alt~$\nu$-Tamari lattice is vertex decomposable. Our approach follows \cite[Section 4.2]{Barnard2020} and uses new methods arising from the box complex, introduced in \Cref{section::boxcomplex}.
%Throughout, we identify this complex with its combinatorial realization, the box complex, as justified in~\Cref{realization}.

\subsection{Basic notions: vertex decomposability}\label{section::Notions_simplicialcomplex}
In this subsection we provide some necessary background regarding simplicial complexes and
related notions.

A \defn{simplicial complex} $\Delta$ on a finite ground set $V$ is a family of subsets of $V$ that is closed under inclusion, meaning that whenever $\sigma \in \Delta$ and $\tau \subseteq \sigma$, one also has~$\tau \in \Delta$. The elements of $\Delta$ are called \defn{faces}. %It is not assumed that~$\{v\} \in \Delta$ for every~$v \in V$. 
The \defn{vertices} of $\Delta$ are precisely those elements~$v \in V$ for which~$\{v\} \in \Delta$. Consequently, the vertex set of $\Delta$ may be strictly smaller than the ground set~$V$. The \defn{join} of two simplicial complexes~$\Delta_1$ and~$\Delta_2$ is the complex $\Delta_1 \ast \Delta_2 = \{ F_1 \cup F_2 : F_1 \in \Delta_1 \text{ and } F_2 \in \Delta_2 \}$. Given a subset~$V' \subseteq V$, the \defn{induced subcomplex} of $\Delta$ on $V'$ is defined by
$ \Delta_{V'} = \{ F \in \Delta : F \subseteq V' \}$. A \defn{facet} of $\Delta$ is a face that is maximal with respect to inclusion. The dimension of a face~$F \in \Delta$ is defined as $\dim(F) = |F| - 1$. The \defn{dimension} of the complex is then given by~$ \dim(\Delta) = \max \{ \dim(F) : F \in \Delta \}$. A facet is said to be \defn{full-dimensional} if its dimension coincides with $\dim(\Delta)$. The complex $\Delta$ is called \defn{pure} if all of its facets have the same dimension. %cardinality. 
For a face~$F \in \Delta$, the \defn{link} and the \defn{deletion} of~$F$ are defined, respectively, as
$
\ell k_{\Delta}(F) = \{ G \in \Delta : G \cap F = \emptyset,\; G \cup F \in \Delta \}
$ and~$
\Delta \setminus F = \{ G \in \Delta : F \not\subseteq G \}.
$

\begin{definition}[\!\cite{Anders1997}]
\label{vd}
A simplicial complex $\Delta$ is said to be \defn{vertex decomposable} if either $\Delta$ is a simplex (including the empty complex) or there exists a vertex $v \in \Delta$ such that:
\begin{enumerate}[label=(\roman*)]
    \item both $\ell k_{\Delta}(v)$ and $\Delta \setminus \{v\}$ are vertex decomposable,\label{vd1}
    \item no facet of $\ell k_{\Delta}(v)$ is a facet of $\Delta \setminus \{v\}$.\label{vd2}
\end{enumerate}
\end{definition}
The property of vertex-decomposability is preserved under the join operation.
\begin{proposition}[\!\textnormal{\cite[Theorem 3.30]{Jonsson2008}}]\label{vd_join}
Let $\Delta_1$ and $\Delta_2$ be vertex decomposable simplicial complexes. Then, the join $\Delta_1 \ast \Delta_2$ is also vertex decomposable.
\end{proposition}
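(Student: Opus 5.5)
The statement to prove is Proposition~\ref{vd_join}: if $\Delta_1$ and $\Delta_2$ are vertex decomposable on disjoint ground sets, then $\Delta_1 \ast \Delta_2$ is vertex decomposable. I will argue by induction on the total number of vertices $|V(\Delta_1)|+|V(\Delta_2)|$. I will use the recursive structure of Definition~\ref{vd} together with two identities for a vertex $v$ of $\Delta_1$:
\[
\ell k_{\Delta_1\ast\Delta_2}(v) = \ell k_{\Delta_1}(v)\ast \Delta_2, \qquad (\Delta_1\ast\Delta_2)\setminus\{v\} = (\Delta_1\setminus\{v\})\ast\Delta_2 .
\]
Both follow directly from the definitions. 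A face of the join has the form $F_1\cup F_2$, and the conditions ``$v\notin G$, $G\cup\{v\}$ a face'' and ``$v\notin G$'' only constrain the $\Delta_1$-part, because the ground sets are disjoint.

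The base case is when both complexes are simplices. Then the join is a simplex, hence vertex decomposable. If exactly one of them, say $\Delta_2$, is a simplex (possibly the empty complex $\{\emptyset\}$), I still decompose along $\Delta_1$. If $\Delta_1$ is a simplex and $\Delta_2$ is not, I swap the roles of the two factors, since the join is symmetric.

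In the inductive step, $\Delta_1$ is not a simplex. Let $v$ be a shedding vertex of $\Delta_1$ as in Definition~\ref{vd}, so that $\ell k_{\Delta_1}(v)$ and $\Delta_1\setminus\{v\}$ are vertex decomposable. Both have fewer vertices than $\Delta_1$: $v$ is not a vertex of either. By the induction hypothesis, $\ell k_{\Delta_1}(v)\ast\Delta_2$ and $(\Delta_1\setminus\{v\})\ast\Delta_2$ are vertex decomposable. By the identities above, this gives condition~\ref{vd1} for $v$ in $\Delta_1\ast\Delta_2$.

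For condition~\ref{vd2}, I use that the facets of a join $A\ast B$ are exactly the sets $F_A\cup F_B$ with $F_A$ a facet of $A$ and $F_B$ a facet of $B$. Suppose a facet $F_1\cup F_2$ of $\ell k_{\Delta_1}(v)\ast\Delta_2$ were also a facet of $(\Delta_1\setminus\{v\})\ast\Delta_2$. Intersecting with the ground set of $\Delta_1$ shows that $F_1$ is a facet of both $\ell k_{\Delta_1}(v)$ and $\Delta_1\setminus\{v\}$. This contradicts condition~\ref{vd2} for $v$ in $\Delta_1$.

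The main point needing care is the degenerate situation involving the empty complex and the void complex: for instance, when $\ell k_{\Delta_1}(v)=\{\emptyset\}$, or when the deletion has facet $\emptyset$. I will check that the facet description of joins still holds when one factor is $\{\emptyset\}$, and that the induction measure strictly decreases in every case. Beyond that, the argument is routine bookkeeping, matching \cite[Theorem 3.30]{Jonsson2008}.
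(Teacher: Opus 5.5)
Your proof is correct. The join identities for link and deletion, the fact that the facets of $A\ast B$ are exactly the unions $F_A\cup F_B$ of facets, and strong induction on $|V(\Delta_1)|+|V(\Delta_2)|$, applied to a \emph{decomposing} vertex (in the paper's terminology, not merely a shedding vertex) of whichever factor is not a simplex, together give both conditions of Definition~\ref{vd}. The degenerate cases you flag are harmless, because the link and deletion of a vertex always contain $\emptyset$, so the void complex never occurs. The paper gives no proof of this statement and only cites \cite[Theorem 3.30]{Jonsson2008}; your argument is the standard proof of that result.
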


The notion of vertex decomposability was originally introduced by Provan and Billera \cite{Provan1980} in the setting of pure simplicial complexes. In the pure case, condition~\ref{vd2} is automatically satisfied, since the link and the deletion have the expected dimensions. A vertex $v$ satisfying condition~\ref{vd2} is called a \defn{shedding vertex} of $\Delta$. If~$v$ satisfies both conditions~\ref{vd1} and~\ref{vd2}, then it is referred to as a \defn{decomposing vertex}.

\subsection{Link and deletion in the box complex}
In this subsection, we characterize the link $\ell k_{\Delta_u}(v)$ and the deletion $\Delta_u \setminus \{v\}$ for a vertex $v \in \Delta_u$. Specifically, Lemma~\ref{lemma_link_iso} demonstrates that the link decomposes into the join of two smaller box complexes.

\begin{observation}\label{observation_boxes_shaded}
Applying \Cref{realization}, we identify both the link and the deletion with specific subsets of boxes in $F_u$. Under the compatibility relation, Definition~\ref{def_boxcomples}, the link $\ell k_{\Delta_u}(v)$ comprises all boxes in $F_u$ that are compatible with $v$, see \Cref{Fig_link}. Similarly, the deletion $\Delta_u \setminus \{v\}$ corresponds to the collection of all compatible subsets of boxes that do not contain $v$, illustrated in~\Cref{Fig_deletion}. Note that in terms of the complex, the link and deletion are the subcomplex induced by the shaded boxes.
\end{observation}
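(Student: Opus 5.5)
The statement to justify is Observation~\ref{observation_boxes_shaded}: under the identification of \Cref{realization}, the link and the deletion of a vertex $v$ in $\Delta_u$ are the subcomplexes of $\Delta_u$ induced on explicit sets of boxes of $F_u$.

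The plan is to work entirely inside the box complex. \Cref{realization} provides an isomorphism between the canonical join complex and $\Delta_u$ that sends vertices to boxes. Links and deletions are intrinsic to a simplicial complex, so they are carried across this isomorphism, and it suffices to describe $\ell k_{\Delta_u}(v)$ and $\Delta_u\setminus\{v\}$ directly. The key structural input is that $\Delta_u$ is a \emph{flag} complex. By Definition~\ref{def_boxcomples}, a set of boxes is a face exactly when its elements are pairwise compatible. Hence $\Delta_u$ is the clique complex of the compatibility graph on the boxes of $F_u$, and both claims reduce to statements about cliques.

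For the link, I would unwind the definition $\ell k_{\Delta_u}(v)=\{G\in\Delta_u : v\notin G,\ G\cup\{v\}\in\Delta_u\}$. A set $G$ of boxes not containing $v$ satisfies $G\cup\{v\}\in\Delta_u$ if and only if the boxes of $G$ are pairwise compatible and each of them is compatible with $v$. Let $C_v$ be the set of boxes of $F_u$ distinct from $v$ and compatible with $v$. The condition then says exactly that $G\subseteq C_v$ and $G\in\Delta_u$, so
\[
\ell k_{\Delta_u}(v)=(\Delta_u)_{C_v},
\]
which is the induced subcomplex on the shaded boxes of \Cref{Fig_link}.

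For the deletion, a face $G$ of $\Delta_u$ avoids containing $\{v\}$ precisely when $v\notin G$. This gives $\Delta_u\setminus\{v\}=(\Delta_u)_{F_u\setminus\{v\}}$, the induced subcomplex on all boxes other than $v$, as shaded in \Cref{Fig_deletion}.

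Nothing here is a serious obstacle. The only care needed is to stress that the flag property is what turns ``$G\cup\{v\}$ is a face'' into a condition on individual boxes. Without it, the link need not be an induced subcomplex. It is also worth noting that the vertex set of the link can be smaller than $C_v$ only if some box in $C_v$ is not a vertex, and every box is a vertex by Lemma~\ref{joinirreducible}.
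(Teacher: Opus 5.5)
Your proposal is correct and matches the paper, which treats this as an immediate observation from the definitions. Your argument is the same unwinding: the deletion is an induced subcomplex in any complex, and the link is the induced subcomplex on the boxes compatible with $v$ precisely because $\Delta_u$ is, by Definition~\ref{def_boxcomples}, the clique (flag) complex of the compatibility relation — a point you make explicit where the paper leaves it implicit. (The appeal to Lemma~\ref{joinirreducible} at the end is unnecessary, since every single box is vacuously pairwise compatible and hence already a vertex of $\Delta_u$.)
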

\begin{figure}[h!]
  \centering
  \begin{subfigure}[b]{0.45\textwidth}
    \centering
    \resizebox{1\linewidth}{!}{\includegraphics[page=1]{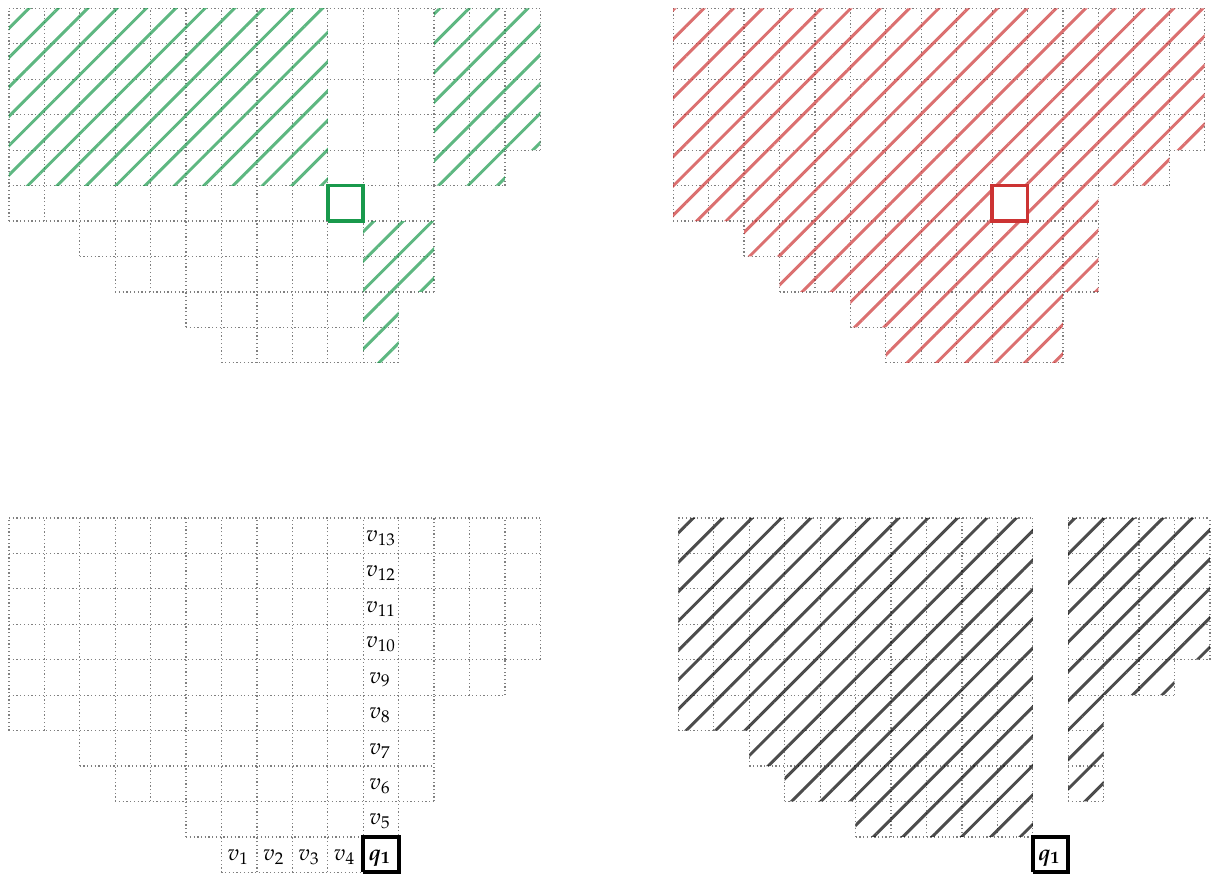}}
    \caption{Link of marked box}
    \label{Fig_link}
  \end{subfigure}\hfill
  \begin{subfigure}[b]{0.45\textwidth}
    \centering
    \resizebox{1\linewidth}{!}{\includegraphics[page=1]{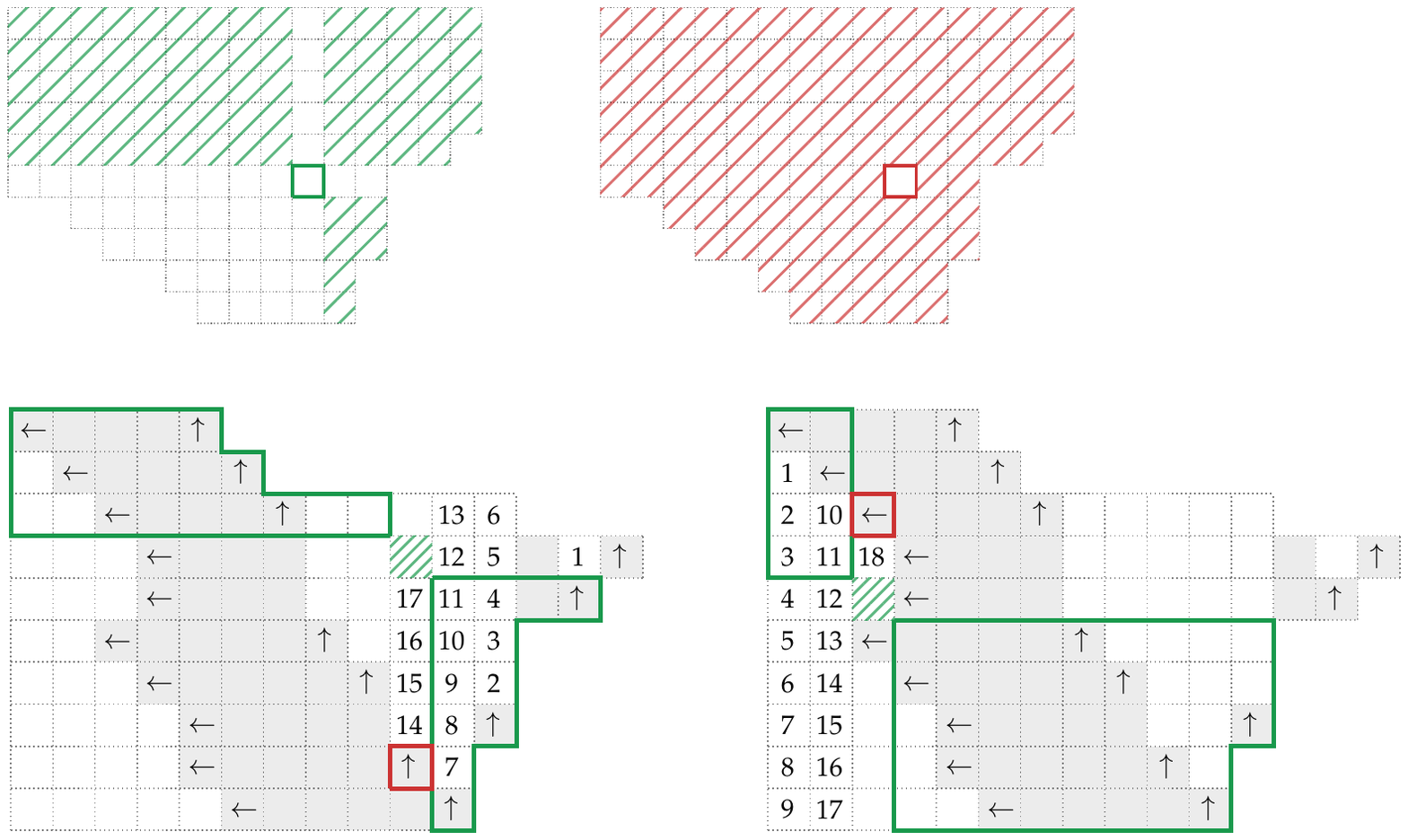}}
    \caption{Deletion of marked box}
    \label{Fig_deletion}
  \end{subfigure}
  \caption{Link and deletion in the shape.}
  \label{Fig_del_link}
\end{figure}
We now take a closer look at the link in the box complex by means of the following definition.

\begin{definition}\label{def_split}
    Let $b$ be a box in the shape $F_u$. We define $U^+_1$ and $U^+_2$ as the sets of compatible boxes northeast and northwest of $b$. We denote by $U^+$ the shape obtained by horizontal glueing of the boxes in $U^+_1$ and $U^+_2$.
The shape $U^-$ is the subshape of compatible boxes southeast of $b$.
\end{definition}
\begin{remark} Formally, Definition~\ref{def_split} can be described as follows: Let $F_u$ be the shape associated with a unimodal sequence $u = (u_1, \dots, u_n)$, Definition~\ref{def_boxcomples}, with rows and columns enumerated top to bottom and left to right. For a box at row~$r$ and column $k$, let $i^* = \max\{ j \ge k : u_j \ge r \}$ if it exists, and $0$ otherwise. We define $u^- = (u_{k+1} - r, \dots, u_{i^*} - r)$ and let $u^+$ be given by $u^+_i = \min\{u_i, r - 1\}$ for~$1 \le i < r$ and $u^+_i = u_{i + i^* - r+1}$ for $i\geq r$. %$r\leq i$.
\end{remark}

Since $u$ is unimodal, this is well defined and $u^+$, $u^-$ are unimodal as well. 

\begin{lemma}\label{lemma_link_iso}
For a vertex $v \in \Delta_u$, the link $\ell k_{\Delta_u}(v)$ is isomorphic to the join~$\Delta_{u^+} \ast \Delta_{u^-}$.
\end{lemma}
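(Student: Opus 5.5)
The plan is to make the link explicit as an induced subcomplex and then split it. By Observation~\ref{observation_boxes_shaded}, $\ell k_{\Delta_u}(v)$ is the subcomplex of $\Delta_u$ induced on the set $C(v)$ of boxes compatible with $v$. Let $v$ sit in row $r$ and column $k$. First I will describe $C(v)$. Every box in column $k$ or row $r$ is incompatible with $v$. A box $b$ strictly northeast or southwest of $v$ is incompatible with $v$ exactly when its bounding rectangle with $v$ lies in $F_u$. Since $F_u$ is top-aligned, the rectangle spanned by $v$ and a box southwest of $v$ always lies in $F_u$: it consists of rows $\le$ the lower row in columns between them, and the column of the lower box is tall enough. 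So $C(v)$ contains nothing southwest of $v$. Hence $C(v)$ splits into three pieces:
\begin{itemize}
\item the part $U^+_2$, northwest of $v$: all boxes in rows $<r$ and columns $<k$, with no constraint;
\item the part $U^+_1$, northeast of $v$: boxes in rows $<r$ and columns $>k$ whose rectangle with $v$ leaves the shape, which by unimodality means columns $>i^*$, where the column height drops below $r$;
\item the part $U^-$, southeast of $v$: boxes in rows $>r$ and columns $k<j\le i^*$.
\end{itemize}
I will check that these three regions match the formal description of $u^+$ and $u^-$ in the remark after Definition~\ref{def_split}.

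Next I will show that every box of $U^-$ is compatible with every box of $U^+=U^+_1\cup U^+_2$. A box $c\in U^-$ and a box $a\in U^+_2$ are in different rows and columns, with $a$ northwest of $c$, so they are compatible by definition. For $c\in U^-$ and $a\in U^+_1$, the box $a$ lies in a column $>i^*$ and $c$ in a column $\le i^*$, with $a$ above $c$, so $a$ is northeast of $c$. Their bounding rectangle contains the box in row $\operatorname{row}(c)>r$ and column $i^*+1$, and that column has height $<r$. So the rectangle leaves the shape and the two boxes are compatible. Consequently the induced complex on $C(v)$ is the join of the induced complexes on $U^+$ and on $U^-$.

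Then I will identify each factor with a box complex. For $U^-$, compatibility between two of its boxes depends only on their relative position and on whether their bounding rectangle lies in $F_u$. Translating $U^-$ so that row $r+1$ becomes the top row gives exactly $F_{u^-}$, with column heights $u_j-r$; this respects both the relative positions and the containment of rectangles.

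For $U^+$, the remaining step is to show that horizontally gluing $U^+_2$ (columns $<k$, truncated to height $r-1$) to $U^+_1$ (columns $>i^*$) preserves compatibility. I expect this step to be the main obstacle. Take $a\in U^+_2$ and $b\in U^+_1$. If they are in the same row, they are incompatible in $F_u$, and they stay incompatible after gluing. If $b$ is northeast of $a$, I claim their rectangle lies in $F_u$ if and only if it lies in the glued shape. Both rectangles involve only rows $<r$, and every column between $a$ and $b$ in $F_u$ has height $\ge r-1$ near the peak; the columns of heights that are cut out are exactly those of height $\ge r$, so they never obstruct the rectangle. To make this precise I will use unimodality to compare the column heights. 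If $b$ is southwest of $a$, that is, lower and to the right, then it is not northeast or southwest in the glued order either, so the pair is compatible in both settings. Within $U^+_2$ or within $U^+_1$, truncating heights to $r-1$ does not change rectangles lying in rows $<r$, so compatibility is unchanged there.

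Combining these facts gives $\ell k_{\Delta_u}(v)\cong\Delta_{u^+}\ast\Delta_{u^-}$.
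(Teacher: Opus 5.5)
The gap is in your southwest step, and it also breaks the lemma as stated for arbitrary vertices. Otherwise your plan follows the paper's proof: you identify the link with the boxes compatible with $v$, split them into a northwest, northeast and southeast part, show the lower part is compatible with the upper part, and glue. Your checks of the gluing and of the cross-compatibility are more careful than the paper's.

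The false step is the claim that no box southwest of $v$ is compatible with $v$. Take $v$ in row $r$, column $k$, and a box $b$ in row $r'>r$, column $k'<k$. Their bounding rectangle also contains the cells of column $k$, and of every column in between, down to row $r'$. Since a unimodal sequence attains its minimum on an interval at an endpoint, the rectangle lies in $F_u$ if and only if $u_k\ge r'$; your justification only checks column $k'$. So whenever $u_k<u_{k'}$ for some $k'<k$, the boxes of column $k'$ in rows $u_k+1,\dots,u_{k'}$ are compatible with $v$.

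The smallest instance is $u=(2,1)$ with $v$ the box in row $1$, column $2$. The box in row $2$, column $1$ is compatible with $v$; indeed $\Delta_{(2,1)}$ is an edge plus an isolated vertex, the canonical join complex of the pentagon. So $\ell k_{\Delta_u}(v)$ is a point, while $U^+=U^-=\emptyset$ and $\Delta_{u^+}\ast\Delta_{u^-}=\{\emptyset\}$. Likewise, in the paper's own example $\Delta_{(3,2,1)}$ the link of the top-right box is an edge plus a point, not the empty complex.

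The paper's proof has the same blind spot: its definition of $U^{\pm}$, and its sentence identifying the boxes below $v$ with $F_{u^-}$, only see the southeast region. The argument can be repaired in two ways.
\begin{enumerate}
\item Add the hypothesis $u_k\ge u_j$ for all $j<k$. Then every southwest box has $r'\le u_{k'}\le u_k$, your southwest step becomes true, and the rest of your argument goes through unchanged. This hypothesis holds for every vertex in the row or column of $q_1$, since these lie in columns of maximal height. Those are the only vertices to which the lemma is applied in the vertex-decomposability proof, so that theorem is unaffected.
\item For a statement valid at every vertex, absorb the compatible southwest boxes into the upper factor. They are compatible with everything in $U^+_1$ and $U^-$. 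Deleting rows $r,\dots,u_k$ and columns $k,\dots,i^*$ from $F_u$ and regluing gives a unimodal shape whose box complex is the upper join factor. For the two examples above this gives $\Delta_{(1)}$ and $\Delta_{(2,1)}$.
\end{enumerate}

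Two minor slips. The case you call ``$b$ southwest of $a$, lower and to the right'' is the southeast case. And when you compare with the formal remark, its index ranges should read $1\le i<k$ and $u^+_i=u_{i+i^*-k+1}$ for $i\ge k$, as the worked example with $u^+=(5,\dots,5,4)$ confirms.
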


\begin{proof}
We identify the link $\ell k_{\Delta_u}(v)$ with the set of boxes in the shape $F_u$ that are compatible with the box corresponding to $v$. By the compatibility relation defined in Definition~\ref{def_boxcomples}, the link can be partitioned into boxes located above and below $v$.
The set of boxes below $v$ corresponds precisely to the subshape $F_{u^-}$, and thus the complex $\Delta_{u^-}$. For the boxes above $v$, we observe that by gluing the regions northeast and northwest of $v$, the original compatibility relations between this regions are preserved. So the boxes above~$v$ can be identified with the shape $F_{u^+}$ and~$\Delta_{u^+}$.

Finally, since every box below $v$ is compatible with every box in the link above~$v$ by compatibility in $F_u$, box complex $\Delta_u$ is isomorphic to the join $\Delta_{u^+} \ast \Delta_{u^-}$.
\end{proof}

\begin{example}\label{ex_shape}
    Consider the shape $F_u$ illustrated in \cref{Fig_del_link}, which is defined by the sequence $u = (6,6,7,8,8,9,10,10,10,10,10,8,5,5,4)$. For the box $v$ located at row 6 and column 10, we obtain the vectors $u^+ = (5,5,5,5,5,5,5,5,5,5,5,4)$ and~$u^- = (4,2)$. By Lemma~\ref{lemma_link_iso}, the link $\ell k_{\Delta_u}(v)$ is isomorphic to~$\Delta_{u^+} \ast \Delta_{u^-}$.
\end{example}

\subsection{Decomposing vertices}\label{decomposing_vertices}
To streamline the proof of vertex decomposability, we define a set of vertices $\{v_1, \dots, v_n\}$ in this subsection. In \Cref{section::vd}, we establish that any ordering of these vertices yields a valid sequence of decomposing vertices.
\begin{definition}\label{def_dec_vertices}
Let $F_u$ be a shape and $q_1$ denote the rightmost vertex in its bottom row. We define $v_1, \dots, v_n$ to be the boxes in $F_u$ that lie in the same row or column as $q_1$.
\end{definition}
\begin{example}
    We continue Example~\ref{ex_shape}. Let $q_1$ be the rightmost box in the bottom row, and denote by $v_1, \dots, v_{13}$ the set of boxes belonging to the same row or column as $q_1$, see \Cref{Fig_dec_vertices}. The deletion $\Delta_u \setminus \{v_1, \dots, v_{13}\}$ is isomorphic to~$\Delta_{u'} \ast \{q_1\}$, with~$u' = (6,6,7,8,8,9,9,9,9,9,8,5,5,4)$. 
\end{example}

\begin{figure}[h!]
  \centering
  \begin{subfigure}[b]{0.45\textwidth}
    \centering
    \resizebox{1\linewidth}{!}{\includegraphics[page=1]{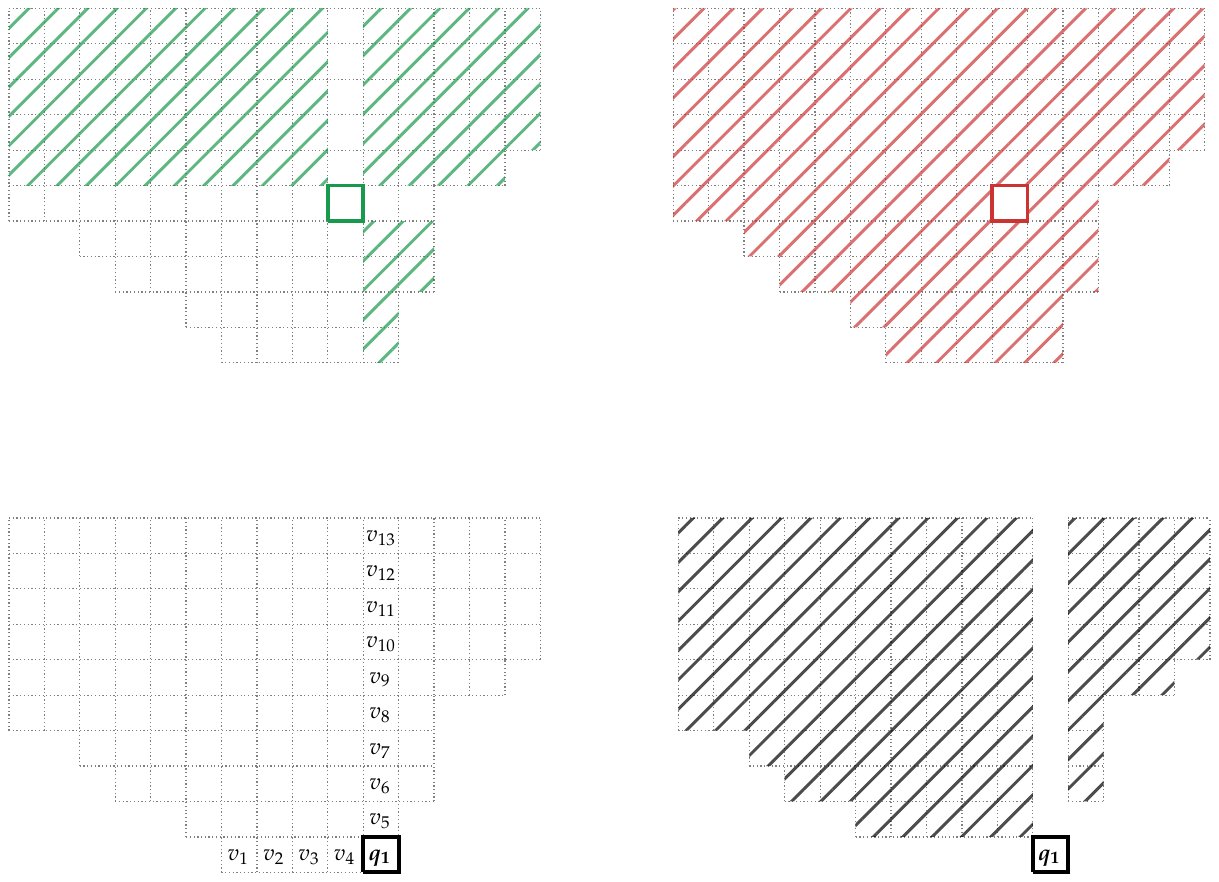}}
    \caption{Decomposing vertices $\{v_1,\dots,v_{13} \}$}
    \label{Fig_dec_vertices}
  \end{subfigure}\hfill
  \begin{subfigure}[b]{0.45\textwidth}
    \centering
    \resizebox{1\linewidth}{!}{\includegraphics[page=1]{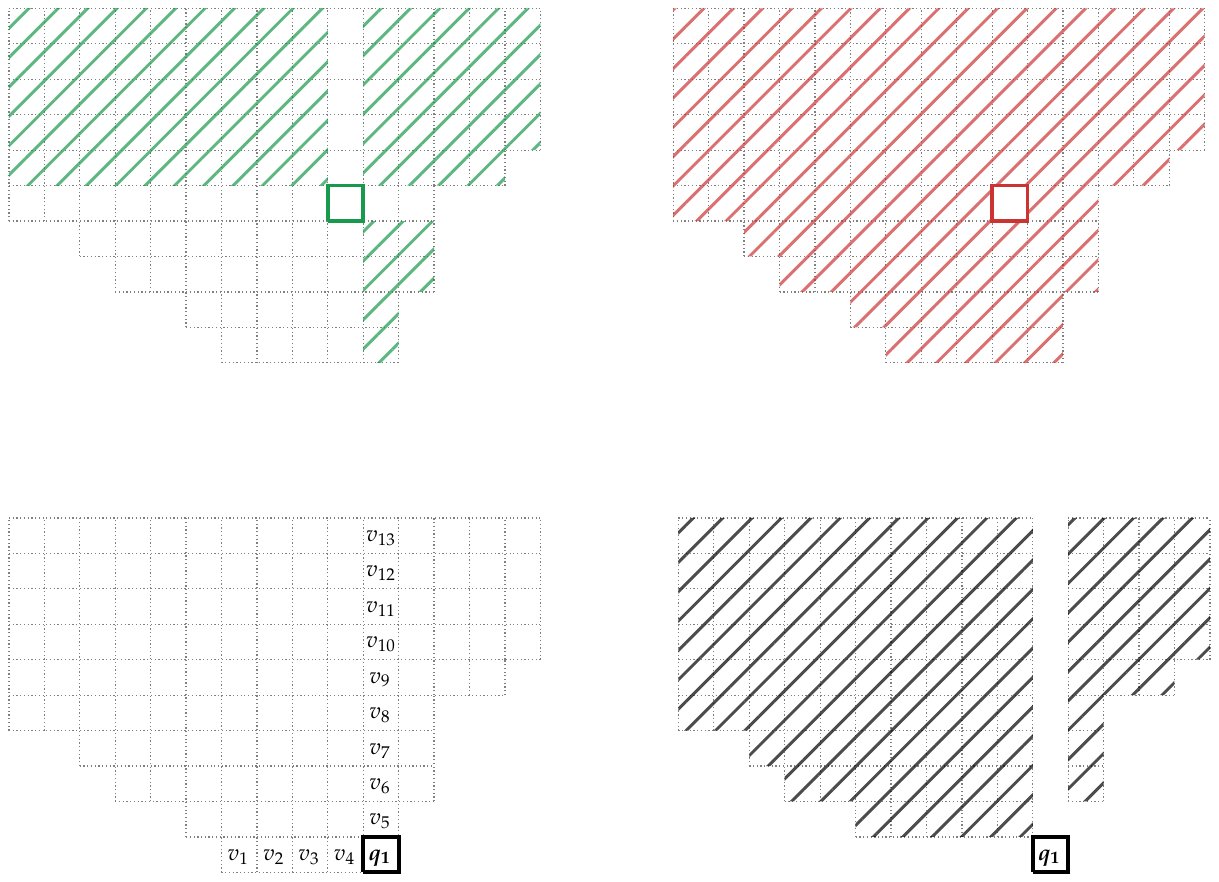}}
    \caption{Deletion of $\{v_1,\dots,v_{13} \}$}
    \label{Fig_dec_deletion}
  \end{subfigure}
  \caption{Decomposing vertices.}
  \label{Fig_dec}
\end{figure}

\subsection{Proof of \cref{vertex_decomposable}}\label{section::vd}
We establish vertex decomposability of $\Delta_u$ by induction on the number of columns, that is the length $n$ of the sequence $u = (u_1, \dots, u_n)$. In the base case $n = 1$, the complex $\Delta_u$ consists of $u_1$ isolated vertices. Such a complex is trivially vertex decomposable. 
For the inductive step, let~$q_1$ be the rightmost vertex in the bottom row and $\{v_1, \dots, v_n\}$ the set of vertices in the same column or row as $q_1$, as specified in Definition~\ref{def_dec_vertices}. We demonstrate that any~$v \in \{v_1, \dots, v_n\}$ is a decomposing vertex for $\Delta_u$, see Definition~\ref{vd}.

%\subsubsection{The deletion $\Delta_u \setminus \{v_1,\dots,v_n\}$ is vertex decomposable:} 
\begin{lemma}\label{lemma_vd_v1tovn}
    The complex $\Delta_u\setminus \{v_1,\dots,v_n\}$ is vertex decomposable.
\end{lemma}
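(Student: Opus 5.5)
The plan is to identify $\Delta_u\setminus\{v_1,\dots,v_n\}$ explicitly as a join, as the example following Definition~\ref{def_dec_vertices} suggests:
\[
\Delta_u\setminus\{v_1,\dots,v_n\}\cong \Delta_{u'}\ast\{q_1\},
\]
where $F_{u'}$ is obtained from $F_u$ by removing the row and the column through $q_1$, with the remaining boxes glued back together. Vertex decomposability then follows from the induction hypothesis together with Proposition~\ref{vd_join}. Here we read $\Delta_u\setminus\{v_1,\dots,v_n\}$ as the induced subcomplex on the boxes other than the $v_i$ (cf. Observation~\ref{observation_boxes_shaded}), and $q_1$ is not among the $v_i$. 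A point complex is a simplex, hence vertex decomposable. Note that $F_{u'}$ has $n-1$ columns, since the column of $q_1$ is removed entirely, so the induction on the number of columns applies to $\Delta_{u'}$.

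\textbf{Step 1: $q_1$ is a cone point.} We show that $q_1$ is compatible with every box outside its row and column. Since $q_1$ lies in the bottom row, no box lies strictly south of it. A box $a$ strictly northeast of $q_1$ would lie in a column to the right whose height does not reach the bottom row, because $q_1$ is the rightmost box of the bottom row. The bounding rectangle of $a$ and $q_1$ then contains the box in that column at the bottom row, which is not in $F_u$; so $a$ and $q_1$ are compatible. Boxes strictly northwest of $q_1$ are neither southwest nor northeast of it, so they are compatible with it by definition. Hence every face of the deletion avoiding $q_1$ can be extended by $q_1$, and the deletion is the cone (join) of $q_1$ with the induced subcomplex on the boxes outside the row and column of $q_1$.

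\textbf{Step 2: the remaining boxes form $F_{u'}$.} We must check that deleting the row and column of $q_1$ and gluing the rest preserves compatibility, and that the result is $F_{u'}$ for a unimodal $u'$. Let $q_1$ sit in column $k$ at row $r=u_k$. Unimodality and the choice of $q_1$ give the following structure.
\begin{enumerate}
\item Columns to the right of $k$ have height less than $r$, so they do not meet row $r$.
\item Columns to the left of $k$ either reach row $r$ or not; the bottom row consists of the columns with $u_j=r$ at the maximum.
\end{enumerate}
Removing column $k$ and shrinking by one every column that meets row $r$ (all of which have height exactly $r$, since row $r$ is the bottom row) yields $u'$, which is again unimodal and top-aligned.

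Compatibility must then be compared for two boxes $a,b$ not in the removed row or column. Their relative position (southwest/northeast, same row, same column) is unchanged. The remaining condition is whether the bounding rectangle lies in the shape. A rectangle in $F_u$ crossing the removed column or row maps to a rectangle in $F_{u'}$, and conversely, because we removed a full column segment and a full row segment of constant height-profile. This is the analogue of the gluing argument in Lemma~\ref{lemma_link_iso}.

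\textbf{Main obstacle.} The delicate part is the converse direction of Step 2: showing that if the glued rectangle lies in $F_{u'}$, then the original rectangle lies in $F_u$. This needs the facts that column $k$ has height at least that of its right neighbours and that the deleted row is the bottom row. I would argue it by comparing column heights on either side of column $k$, using unimodality, in the same manner as the gluing in Lemma~\ref{lemma_link_iso}.
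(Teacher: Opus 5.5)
Your proposal is correct and follows the paper's proof. In both, $q_1$ is a cone point compatible with every box outside its row and column, the remaining boxes glue to $F_{u'}$, and induction together with Proposition~\ref{vd_join} finishes the argument; your Step~2 spells out the compatibility check that the paper leaves implicit, and the converse you flag as the main obstacle is immediate, since the bounding rectangle never meets the deleted bottom row and each of its columns is either column $k$ (of maximal height $u_k$) or has height in $u$ at least its height in $u'$.
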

\begin{proof}
Let $F_{u'}$ be the shape obtained by $F_u$ after deleting the row and column of~$q_1$. Then it is enough to argue, that the box complex can be decomposed as join $\Delta_u \setminus \{ v_1,\dots,v_n\} =\Delta_{u'} \ast \{q_1\}$.
Since $q_1$ was chosen rightmost, it is compatible to all boxes northeast to it, since the smallest rectangle including them cannot be inside the shape $F_u$. Moreover, $q_1$ is compatible to all boxes northwest to it, by definition of compatibility. Therefore, the deletion $\Delta_u \setminus\{v_1,\dots,v_n\}$ and so $q_1$ is compatible with all boxes in $$\Delta_u \setminus\{v_1,\dots,v_n\} \cong \Delta_{u'} \ast \{q_1\}.$$ Since joins preserve vertex decomposability by Proposition~\ref{vd_join} and $\Delta_{u'}$ is vertex decomposable by induction, then the claim follows.
\end{proof}

%\subsubsection{The link $\ell k_{\Delta_u}(v)$ is vertex decomposable:}\label{section::link}
\begin{lemma}\label{link}
For any  $J \subseteq [n]$ let $\Delta = \Delta_u \setminus \{v_j : j \in J\}$ and $v \in \{v_j : j \in [n] \setminus J\}$. Then, the link $\ell k_{\Delta}(v)$ is vertex decomposable.
\end{lemma}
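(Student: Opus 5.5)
The plan is to reduce the link in the partially deleted complex to a link in the full box complex, and then apply Lemma~\ref{lemma_link_iso} together with induction on the number of columns.

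First, note that $\Delta = \Delta_u \setminus \{v_j : j\in J\}$ is the induced subcomplex of $\Delta_u$ on the boxes outside $\{v_j : j\in J\}$. For a vertex $v\in\{v_j : j\notin J\}$, the link $\ell k_{\Delta}(v)$ is therefore the induced subcomplex of $\ell k_{\Delta_u}(v)$ on the boxes compatible with $v$ and not in $\{v_j: j\in J\}$. The key observation is that every $v_j$ lies in the same row or the same column as $q_1$. By Definition~\ref{def_boxcomples}, any two boxes in a common row or column are incompatible. Hence a box $v_j$ in the row of $q_1$ is incompatible with every other box of that row. If $v$ is in the row of $q_1$, then all $v_j$ in that row are already absent from $\ell k_{\Delta_u}(v)$. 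The remaining $v_j$ lie in the column of $q_1$, strictly above $v$ and to its northeast (since $q_1$ is the rightmost box in the bottom row). I would check case by case whether these lie in the region compatible with $v$. The goal is to show that the removed boxes that survive in $\ell k_{\Delta_u}(v)$ form an entire column, or an entire final segment of columns, of the glued shape $F_{u^+}$, or else are absent altogether. The case where $v$ is in the column of $q_1$ is analogous, with the roles of rows and columns exchanged via Remark~\ref{remark_iso}\eqref{remark_iso_2}.

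Granting this, Lemma~\ref{lemma_link_iso} gives $\ell k_{\Delta_u}(v)\cong \Delta_{u^+}\ast\Delta_{u^-}$. Removing the surviving $v_j$'s turns this into $\Delta_{\tilde u^+}\ast\Delta_{u^-}$, or a variant, where $\tilde u^+$ is obtained from $u^+$ by deleting boxes at the boundary. I expect this to be either deleting the last column, or shortening columns so that the shape stays top-aligned and unimodal. In particular $v$ sits in the last column or the bottom row, so $F_{u^-}$ is empty or a single column. Both $\tilde u^+$ and $u^-$ have strictly fewer columns than $u$, or at least fewer boxes. I would therefore run the induction of \Cref{section::vd} on the number of boxes (or on the pair (columns, boxes)), so that $\Delta_{\tilde u^+}$ and $\Delta_{u^-}$ are vertex decomposable by the inductive hypothesis. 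Proposition~\ref{vd_join} then gives vertex decomposability of the join, hence of $\ell k_\Delta(v)$.

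The main obstacle is the geometric bookkeeping. I must verify that after deleting $\{v_j: j\in J\}\cap \ell k_{\Delta_u}(v)$, the remaining compatible boxes still form a box shape $F_w$ for a unimodal $w$, with compatibility unchanged under the gluing of $U^+_1$ and $U^+_2$ in Definition~\ref{def_split}. The point to check is that the surviving $v_j$'s do not split a column of $F_{u^+}$ in the middle. My first attempt would be to use that $q_1$ is in the bottom row, so the $v_j$ in its column occupy the lowest positions of the columns of $U^+_1$ near the right end. Deleting them then amounts to lowering the bottom boundary, which keeps the shape top-aligned. If this fails for some $J$, the fallback is to show directly that those $v_j$ form cone-like vertices: each is compatible with everything else in the link. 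The link would then be a join with a simplex on those vertices, and Proposition~\ref{vd_join} applies again.
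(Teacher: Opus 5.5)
\textbf{There is a genuine gap.} Your outline matches the paper's: pass to the link in $\Delta_u$, apply Lemma~\ref{lemma_link_iso}, induct, and use Proposition~\ref{vd_join}. But you leave open the one step the lemma actually needs, namely whether a deleted $v_j$ can lie in $\ell k_{\Delta_u}(v)$. You write ``I would check case by case'', and the rest of your plan assumes some $v_j$ do survive: the modified shape $\tilde u^+$, switching the induction to the number of boxes, and the cone-vertex fallback.

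In fact none survive. You were right that same row/column incompatibility does not cover the cross case, where $v$ is in the bottom row and $v_j$ is above $q_1$ in its column, or the reverse. What you missed is that the rectangle clause of Definition~\ref{def_boxcomples} handles it.
\begin{itemize}
\item The bottom row lies at depth $\max_i u_i$. By unimodality, the columns reaching it form a contiguous block of maximal height, and the rightmost of these is the column of $q_1$.
\item A bottom-row box left of $q_1$ and a box above $q_1$ are in SW/NE position.
\item Their bounding rectangle lies in this block, hence in $F_u$, so they are incompatible.
\item In the language of Definition~\ref{def_split}: for $v$ in the bottom row, $i^*$ is the column of $q_1$, and compatible NE boxes lie strictly right of column $i^*$.
\end{itemize}
So every $v_j$ is incompatible with $v$, and $\ell k_{\Delta}(v)=\ell k_{\Delta_u}(v)$. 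This is the identity the paper's proof rests on. The paper justifies it only by the same row/column remark; the cross case needs the rectangle argument above. Lemma~\ref{lemma_link_iso}, induction on the number of columns ($u^+$ and $u^-$ both have fewer than $n$ columns), and Proposition~\ref{vd_join} then finish the proof with no change to $u^+$.

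Some of your provisional geometric claims are also wrong, which shows the configuration was not pinned down.
\begin{itemize}
\item The column of $q_1$ is the rightmost column of maximal height, not in general the last column of $F_u$.
\item $F_{u^-}$ can have several columns. For $u=(5,4,3,2)$ and $v$ the top box of the first column, one gets $u^-=(3,2,1)$.
\item Remark~\ref{remark_iso}\eqref{remark_iso_2} sends $F_u$ to a left-aligned shape with unimodal row lengths, which is not of the form $F_w$. So it does not let you transfer the bottom-row case to the column case. The rectangle argument handles both cases at once, so no symmetry is needed.
\end{itemize}
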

\begin{proof}

Two boxes in the same row or column are not compatible. Therefore, the link does not change by removing a box in the same row or column as $q_1$. Hence, $$\ell k_\Delta (v)=\ell k _{\Delta_u}(v).$$ 
By Lemma~\ref{lemma_link_iso}, the link $\ell k_\Delta (v)$ decomposes into the join $\Delta_{u^+} \ast \Delta_{u^-}$ of smaller box complexes, meaning $u^+$ and $u^-$ are of length $<n$. In symbols, $$\ell k _\Delta(v)\cong\Delta_{u^+} \ast \Delta_{u^-}.$$ By the inductive hypothesis, these smaller complexes $\Delta_{u^+}$, $\Delta_{u^-}$ are vertex decomposable. Since by Proposition~\ref{vd_join} joins preserve vertex decomposability, it follows that the link itself is vertex decomposable.

\end{proof}

%\subsubsection{No facet of $\ell k_{\Delta_u}(v)$ is a facet of $\Delta_u \setminus \{v\}$:}\label{section::nofacetoflink}

\begin{lemma}\label{nofacetoflink}
    For any $J \subseteq [n]$ let $\Delta = \Delta_u \setminus \{v_j : j \in J\}$ and $v \in \{v_j : j \in [n] \setminus J\}$. Then, no facet of the link $\ell k _{\Delta}(v)$ is a facet of the deletion~$\Delta \setminus \{v\}$.
\end{lemma}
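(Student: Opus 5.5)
The goal is to show that no facet of $\ell k_{\Delta}(v)$ is a facet of $\Delta\setminus\{v\}$. Equivalently, for every facet $G$ of the link there is a vertex $w\neq v$ of $\Delta$ with $w\notin G$ and $G\cup\{w\}\in\Delta$. We will take $w=q_1$.

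\emph{Key observation about $q_1$.} Since $q_1$ is the rightmost box of the bottom row, there are no boxes strictly southeast or south of it in $F_u$. For a box $a$ northeast of $q_1$, the bounding rectangle of $a$ and $q_1$ would have to contain the box directly east of $q_1$ in the bottom row. That box does not exist, so $a$ and $q_1$ are compatible. Boxes northwest of $q_1$ are compatible with $q_1$ by definition. Hence the only boxes incompatible with $q_1$ are those in its row or column, namely the $v_j$. This is the argument already used in Lemma~\ref{lemma_vd_v1tovn}. Note that $q_1$ itself lies in its own row and column, so we should first settle whether $q_1\in\{v_1,\dots,v_n\}$.

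\textbf{Case $v\neq q_1$.} Let $G$ be a facet of $\ell k_{\Delta}(v)$. By the proof of Lemma~\ref{link}, the link is unaffected by deleting other $v_j$, because every $v_j$ is incompatible with $v$. So $G$ consists of boxes compatible with $v$, and in particular $G$ contains no box in the row or column of $q_1$. The boxes of $G$ are therefore pairwise compatible and each is compatible with $q_1$. Thus $G\cup\{q_1\}$ is a face, provided $q_1$ is still a vertex of $\Delta$, i.e. $q_1\notin\{v_j:j\in J\}$. Then $G\cup\{q_1\}\in\Delta\setminus\{v\}$ with $q_1\notin G$, since $q_1$ shares a row or column with $v$ and so is not in the link. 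Hence $G$ is not a facet of the deletion.

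\textbf{Case $v=q_1$, or $q_1$ already deleted.} This is the main obstacle. I would resolve it by reading Definition~\ref{def_dec_vertices} as excluding $q_1$ itself; the example supports this, since the deletion is $\Delta_{u'}\ast\{q_1\}$, so $q_1$ survives every deletion. With that convention, $q_1$ is always a vertex of $\Delta$ and never equal to $v$, and the argument above is complete.

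If instead one had to handle $v=q_1$, the plan would be as follows. Take a facet $G$ of $\ell k(q_1)$; this is all boxes outside the row and column of $q_1$, so $G$ is a facet of $\Delta_{u'}$. Then find a surviving $v_j$ that is compatible with all of $G$. The natural candidate is the bottom box of the column of $q_1$, or the leftmost box of the bottom row. One would check this using maximality of $G$ in $\Delta_{u'}$, and it is here that I expect the delicate case analysis to lie.
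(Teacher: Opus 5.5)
Your proof is correct and is essentially the paper's argument: every facet $G$ of the link consists of boxes northwest or northeast of $q_1$, so $G\cup\{q_1\}$ is a strictly larger face of $\Delta\setminus\{v\}$. Like you, the paper reads Definition~\ref{def_dec_vertices} as excluding $q_1$, as its example $\Delta_{u'}\ast\{q_1\}$ confirms. Your fallback plan for $v=q_1$ is unnecessary and could not succeed, because under that reading the statement is false: if every other box in the row and column of $q_1$ has been deleted, the link and the deletion of $q_1$ are both $\Delta_{u'}$.
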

%Finally, we verify that no facet of the link $\ell k _{\Delta_u}(v)$ is also a facet of the deletion~$\Delta_u \setminus \{v\}$. 
\begin{proof}

Let~$F \in \ell k_{\Delta}(v)$ be a facet of the link of $v$ in $\Delta=\Delta_u \setminus \{v_j : j \in J \subseteq [n]\}$. All boxes comprising the link~$\ell k _{\Delta}(v)=\ell k _{\Delta_u}(v)$ are situated to the northwest or northeast of~$q_1$. Because neither $q_1$ nor any of the boxes within the link have been removed, the union $F \cup \{q_1\}$ forms a valid face in $\Delta$. This guarantees that $F\cup \{q_1\}$ is a face of the deletion $\Delta \setminus \{v_1\}$, meaning no facet of the link can be a maximal face of the deletion.
\end{proof}

\begin{remark}\label{remark_uparrow_boxes}
The box $q_1$ is chosen as the rightmost box in the bottom row. We proceed iteratively with the remaining glued shape, denoting the subsequently chosen elements by $q_2, \dots, q_m$. We point out that the boxes $q_1, \dots, q_m$ can be directly constructed by repeatedly choosing the rightmost box from bottom to top that is compatible with all previously selected boxes, if possible. See \Cref{Fig_termination}.
        \begin{figure}[!h]
    \centering
\includegraphics[width=0.7\textwidth]{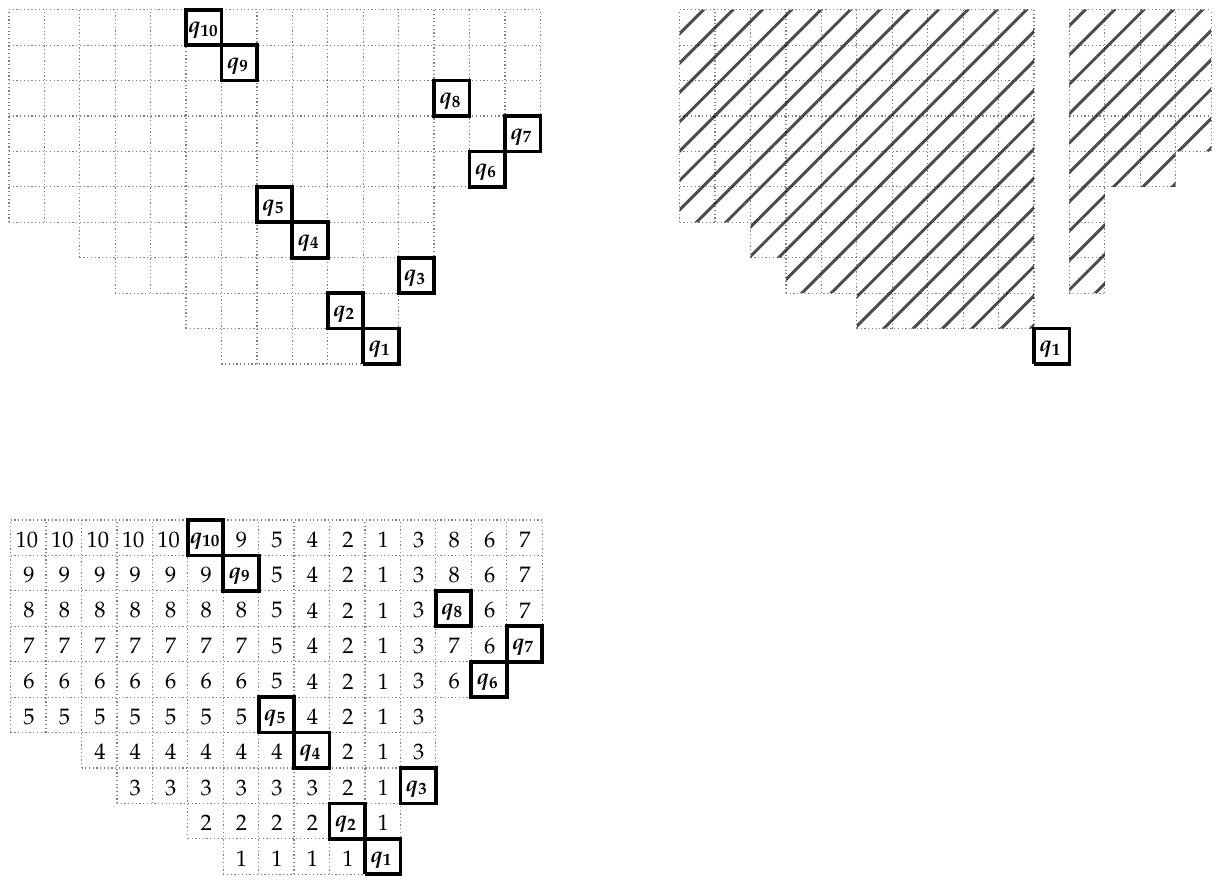} 
    \caption{The vertices $q_1,\dots,q_{10}$ are pairwise compatible.}
    \label{Fig_termination}
    \end{figure}
\end{remark}

%\subsubsection{The deletion $\Delta_u \setminus \{v_k,\dots,v_n\}$ is vertex decomposable:}

\begin{lemma}
    %Let $J \subseteq \{1, \dots, n\}$, $\Delta = \Delta_u \setminus \{v_j : j \in J\}$ and $v \in \{v_j : j \in [n] \setminus J\}$. 
    For any $k\in[n]$, the complex $\Delta_u \setminus \{v_k,\dots,v_n\}$ is vertex decomposable.
\end{lemma}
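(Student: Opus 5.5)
We argue by downward induction on $k$, from $k=n+1$ down to $k=1$. We use the convention that $\{v_k,\dots,v_n\}=\emptyset$ when $k=n+1$, so that the deletion is $\Delta_u$ itself. This ordering is natural because every step removes one more vertex from the complex treated in the step before.

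The base of the induction is the fully deleted complex $\Delta_u\setminus\{v_1,\dots,v_n\}$, which is vertex decomposable by Lemma~\ref{lemma_vd_v1tovn}. It is convenient to organise the induction the other way round. We show that for each $k\in[n]$, if $\Delta' := \Delta_u\setminus\{v_{k+1},\dots,v_n\}$ is the current complex, then $v_k$ is a decomposing vertex of $\Delta'$ as soon as $\Delta'\setminus\{v_k\}=\Delta_u\setminus\{v_k,\dots,v_n\}$ is vertex decomposable. Read downward, $\Delta_u\setminus\{v_k,\dots,v_n\}$ is obtained from $\Delta_u\setminus\{v_{k-1},\dots,v_n\}$ by re-adding $v_{k-1}$. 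So we induct on $m=k-1$, the number of re-added vertices, starting at $m=0$, where the complex is $\Delta_u\setminus\{v_1,\dots,v_n\}$.

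Concretely, suppose $\Delta'' := \Delta_u\setminus\{v_{k},\dots,v_n\}$ is vertex decomposable. Put $\Delta' = \Delta_u\setminus\{v_{k+1},\dots,v_n\}$, with the convention $\Delta'=\Delta_u$ when $k=n$. To conclude that $\Delta'$ is vertex decomposable via the decomposing vertex $v_k$, we check the conditions of Definition~\ref{vd} with $\Delta=\Delta'$ and $J=\{k+1,\dots,n\}$.
\begin{enumerate}[label=(\alph*)]
\item The deletion $\Delta'\setminus\{v_k\}=\Delta''$ is vertex decomposable by the induction hypothesis.
\item The link $\ell k_{\Delta'}(v_k)$ is vertex decomposable by Lemma~\ref{link}, applied with this $J$ and $v=v_k\notin\{v_j:j\in J\}$.
\item No facet of $\ell k_{\Delta'}(v_k)$ is a facet of $\Delta'\setminus\{v_k\}$, by Lemma~\ref{nofacetoflink} with the same $J$.
\end{enumerate}
Since (a)–(c) hold, $\Delta'$ is vertex decomposable. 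Iterating from $k=1$ up to $k=n$ shows that every $\Delta_u\setminus\{v_k,\dots,v_n\}$ is vertex decomposable, and at the end also $\Delta_u$ itself; this last fact is what is needed for Theorem~\ref{vertex_decomposable}.

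One technical point remains. For $v_k$ to be a legitimate choice in Definition~\ref{vd}, it must be a vertex of $\Delta'$. Each box is compatible with itself vacuously, so $\{v_k\}$ is a face and $v_k$ is a vertex. We must also exclude the degenerate case where $\Delta'$ is a simplex; in that case there is nothing to prove.

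The main obstacle is that Lemmas~\ref{link} and~\ref{nofacetoflink} both rest on the overall induction on the number of columns $n$. We must make sure we apply them only in that inductive setting: the link is a join $\Delta_{u^+}\ast\Delta_{u^-}$ of box complexes with fewer columns, as given by Lemma~\ref{lemma_link_iso}. The other point to double-check is Lemma~\ref{nofacetoflink}. There the witness $q_1$ must survive in $\Delta'\setminus\{v_k\}$, that is, $q_1$ must not be one of the deleted $v_j$. If the definition of $v_1,\dots,v_n$ includes $q_1$, we order the $v_j$ so that $q_1$ is never deleted in this chain, or we treat $q_1$ separately as a cone point.
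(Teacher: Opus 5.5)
Your proposal is correct and is essentially the paper's proof: the case $k=1$ is Lemma~\ref{lemma_vd_v1tovn}, and $v_k$ is a decomposing vertex of $\Delta_u\setminus\{v_{k+1},\dots,v_n\}$, with the deletion covered by the induction hypothesis and the link and shedding conditions by Lemmas~\ref{link} and~\ref{nofacetoflink}. Two small points: the induction actually runs upward in $k$, although your opening sentence calls it downward; and your worry about $q_1$ does not arise, because the $v_j$ exclude $q_1$ itself, as the example $\Delta_u\setminus\{v_1,\dots,v_{13}\}\cong\Delta_{u'}\ast\{q_1\}$ shows.
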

\begin{proof}
    We argue by induction on $k$. Lemma~\ref{lemma_vd_v1tovn} established the case $k=1$.    
    By induction hypothesis $\Delta_u \setminus \{v_k,\dots,v_n\}$ is vertex decomposable. For the induction step, we argue that $v_k$ is a decomposing vertex in $\Delta'_u=\Delta_u \setminus \{v_{k+1},\dots,v_n\}$. Indeed, the deletion $\Delta'_u\setminus \{v_k\}$ is $\Delta_u \setminus \{v_k,\dots,v_n\}$ and vertex decomposable by induction hypothesis. By Lemma~\ref{link} and \ref{nofacetoflink} the complex is vertex decomposable.
    %This is clear from the arguments above, since two boxes in the same column or row are not compatible, the deletion of such a box, takes no impact.
\end{proof}
    
\section{Applications}
The family of alt $\nu$-Tamari lattices exhibits a remarkable invariance of structural properties. In particular, many fundamental characteristics of $\altTam{\nu}{\delta}$ are independent of the increment vector $\delta$. In this final section, we present some invariants at the level of the canonical join complex.

%In this section, we apply our new combinatorial model, the box complex, to~$\nu$-Dyck and $\nu$-Tamari lattices. 

%This framework enables us to compute the Euler characteristic and investigate the shellability and homology of these structures.

\subsection{The Euler characteristic}\label{section::Euler}
As an initial application, we compute the Euler characteristic of the canonical join complex of alt $\nu$-Tamari lattices $\altTam{\nu}{\delta}$. To do so, we recall the following notions.

\begin{definition}[\textnormal{\cite{tamcom}}]\label{definition_nara_poly}
The \defn{$\nu$-Narayana numbers}, denoted $\text{Nar}_\nu(i)$, count the number of $\nu$-paths containing exactly $i$ valleys $EN$. The~\defn{$\nu$-Narayana polynomial} is given by $$\text{Nar}_\nu(x) = \sum_{i \geq 0} \text{Nar}_\nu(i) x^i.$$\end{definition}
The \defn{$f$-vector} of a simplicial complex $\Delta$ consists of the numbers $f_i(\Delta)$ counting
the faces of dimension $i$. The \defn{Euler characteristic} of $\Delta$ is defined as$$\chi(\Delta) = \sum_{i=0}^{\dim(\Delta)} (-1)^i f_i(\Delta),$$ and the \defn{reduced Euler characteristic} is given by $\tilde{\chi}(\Delta) = \chi(\Delta) - 1$. The following Lemma~\ref{Lemma_f_vector} follows from known facts about unimodular triangulations and is stated in \cite[Section $8$]{alt}.

\begin{proposition}[\textnormal{\cite[Theorem 8.1]{alt}}] \label{Prop_cc_nar}
    The number elements in the alt $\nu$-Tamari lattice $\altTam{\nu}{\delta}$ with $i$ upper covers is given by $Nar_\nu(i)$.
\end{proposition}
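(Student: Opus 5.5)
The statement immediately above is Proposition~\ref{Prop_cc_nar}: the number of elements of $\altTam{\nu}{\delta}$ with exactly $i$ upper covers equals $\mathrm{Nar}_\nu(i)$. My plan is to reduce it to the known $\delta$-independent count of valleys. First, upper covers of a $(\delta,\nu)$-tree $T$ correspond to right rotations, hence to ascents of $T$. So the claim is that the number of $(\delta,\nu)$-trees with exactly $i$ ascents is $\mathrm{Nar}_\nu(i)$.

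The key step is to find a bijection $\Phi_\delta$ from $(\delta,\nu)$-trees to $\nu$-paths that sends ascents to valleys $EN$. I will use the bijection of Ceballos and Chenevière between $(\delta,\nu)$-trees and $\nu$-Dyck paths mentioned in the remark after Definition~\ref{def_altvtamari}.

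\begin{itemize}
\item[1.] Check the extreme case $\delta=(0,\dots,0)$. Here the lattice is the $\nu$-Dyck lattice, and the covers $P\lessdot P'$ are given by swapping a valley $EN$ into $NE$. So the number of upper covers of a path equals its number of valleys, and the claim holds immediately.
\item[2.] Prove that the number of ascents of $T$ does not depend on $\delta$ in the following sense. Under the bijection, ascents of $T$ correspond to nodes $q$ for which there are nodes $p$ to the north and $r$ to the east with an empty bounding box. These are exactly the nodes that are not the last node in their row and not the top node in their column, counted appropriately. I would show that, under the bijection, such nodes correspond to the valleys of the path. The reason is that the bijection reads the path column by column from the number of nodes in each column, and a valley corresponds to a column boundary where the tree can be rotated.
\item[3.] Alternatively, avoid the bijection and argue through the dual count. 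The number of elements with $i$ lower covers equals the number of faces of size $i$ of the canonical join complex. By Theorem~\ref{realization}, this is the number of $i$-sets of pairwise compatible boxes in $F_u$. One could then hope to show that this count is independent of how $F_u$ is reshaped. However, this gives lower covers rather than upper covers, so I would also need self-duality of the counts, for example via $\mathrm{Nar}_\nu(i)=\mathrm{Nar}_\nu(n-i)$-type symmetry. That is less clean.
\end{itemize}

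I expect step 2 to be the main obstacle: matching ascents of a $(\delta,\nu)$-tree with valleys of the corresponding path uniformly in $\delta$. What I would try first is to describe the ascents of $T$ through the nonempty columns of the tree. This follows the proof of Lemma~\ref{joinirreducible}, where descents were read off columns with at least two nodes.

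In that description, an ascent should correspond to a node that has a node strictly east in its row and a node strictly north in its column. Equivalently, it is a non-root row-end situation. I would then verify that the Ceballos--Chenevière bijection records the row lengths as the horizontal runs of the path. With that in place, a nonterminal row with a node above corresponds exactly to an $E$ followed by an $N$.

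Summing over trees with $i$ ascents then gives $\mathrm{Nar}_\nu(i)$, independently of $\delta$.
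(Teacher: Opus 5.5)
Your reduction to counting $(\delta,\nu)$-trees by ascents is sound, and so is your base case $\delta=(0,\dots,0)$. But the proof stops at the only nontrivial step. Step~2 is stated as a plan (``I would show'', ``I would then verify''), and the heuristic you give for it is partly off.

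Imitating the proof of Lemma~\ref{joinirreducible} via columns counts \emph{descents} (non-leftmost columns with at least two nodes), i.e.\ lower covers. For ascents you need rows, and the relevant node is the \emph{leftmost} node of its row, not a row end. Concretely, you need two facts.

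(a) In every $(\delta,\nu)$-tree the ascents are exactly the leftmost nodes of rows of height $<n$ containing at least two nodes. Your description of an ascent is correct: a node with a node above it in its column and a node to its right in its row, the bounding rectangle being automatically empty. The real content is the per-row count. Since $F_{\check\nu}$ is closed under moving north and west, suppose $a$ lies left of $b$ in one row and both have a node above them. Then $a$ and the node above $b$ are $\check\nu$-incompatible, so at most one node per row has a node above it. The leftmost node of a non-top row has its parent (north or west) above it. Hence the number of ascents equals the number of rows $y<n$ with at least two nodes.

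(b) You need a bijection $\Phi_\delta$ to $\nu$-paths under which row $y$ of $T$ has $1+e_y$ nodes, where $e_y$ is the number of east steps of $\Phi_\delta(T)$ at height $y$. Then valleys, which are exactly the heights $y<n$ with $e_y\ge 1$, match ascents. Fact (b) is where all the dependence on $\delta$ sits. You neither state it precisely nor verify it; you describe the bijection once through column counts and once through row lengths. Without (b), fact (a) gives no connection to $\mathrm{Nar}_\nu$.

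A shorter repair is available if you take as known the Ceballos--Chenevière isomorphism with their path model of $\altTam{\nu}{\delta}$. There, the elements covering a $\nu$-path are its $\delta$-rotations, one at each valley. Your Step~1 argument then goes through verbatim for every $\delta$, with no analysis of trees needed.

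Step~3 would not help. Passing between lower and upper covers is done by Barnard's equality for semidistributive lattices, as in Corollary~\ref{Lemma_f_vector}, not by a symmetry of $\mathrm{Nar}_\nu$. Such a symmetry fails in general: for $\nu=ENEEN$ the coefficients are $1,4,2$. Moreover, the paper deduces the face numbers of the box complex \emph{from} this proposition, so that route is circular unless you count compatible box sets independently.

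For reference, the paper does not prove the statement itself but imports it from \cite[Theorem 8.1]{alt}, which, as the paper remarks, rests on facts about unimodular triangulations. A completed version of (a) and (b) would give a purely combinatorial proof.
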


\begin{corollary}\label{Lemma_f_vector}
    The number of $(i-1)$-dimensional faces of the canonical join complex $\Delta_{\altTam{\nu}{\delta}}$ is $Nar_\nu(i)$.
\end{corollary}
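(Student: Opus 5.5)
The plan is to count $(i-1)$-dimensional faces of $\Delta_{\altTam{\nu}{\delta}}$ by setting up a bijection between faces of size $i$ and lattice elements. This bijection should send elements with $i$ lower covers to faces of size $i$. Then Proposition~\ref{Prop_cc_nar} finishes the count, after a duality step that converts ``lower covers'' into ``upper covers''.

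\textbf{Step 1.} By definition of the canonical join complex and the characterization $\mathrm{Can}(a)=\{\lambda_{jsd}(a',a)\mid a'\lessdot a\}$, every face is the canonical join representation of a unique element $a$. Uniqueness holds because $a=\bigvee \mathrm{Can}(a)$. Conversely, every element has a canonical join representation, since the lattice is join-semidistributive. So faces correspond bijectively to elements of $\altTam{\nu}{\delta}$.

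\textbf{Step 2.} Show that $|\mathrm{Can}(a)|$ equals the number of lower covers of $a$. For this we need the map $a'\mapsto\lambda_{jsd}(a',a)$ to be injective. If two lower covers $a',a''$ had the same label $j$, then by Lemma~\ref{observation_labeling}\eqref{cond3} we would have $a'\wedge j=j_\downarrow=a''\wedge j$. Also $a'\vee j=a=a''\vee j$. Join-semidistributivity then gives $a=a'\vee(a'\wedge a'')$. But $a'\wedge a''\le a'$, so the right-hand side equals $a'$, a contradiction. Alternatively, one can read injectivity off Corollary~\ref{labeling_altv}: distinct descents of a tree give distinct rotation boxes. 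Hence the number of $(i-1)$-faces equals the number of elements with exactly $i$ lower covers.

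\textbf{Step 3.} Convert lower covers to upper covers. This is the main obstacle, since Proposition~\ref{Prop_cc_nar} counts elements with $i$ upper covers (ascents) rather than descents. I would first try to use that $\altTam{\nu}{\delta}$ is self-dual up to reversing $\nu$. Concretely, rotating the shape by $180^\circ$ should send $(\delta,\nu)$-trees to trees for the reversed data, exchanging ascents and descents. Since $\mathrm{Nar}_\nu(i)=\mathrm{Nar}_{\overleftarrow{\nu}}(i)$ by reversing paths (valleys $EN$ map to valleys), the count transfers.

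If this duality is cumbersome, the fallback is a more direct argument. The number of elements with $i$ lower covers equals the number with $i$ upper covers whenever the descent and ascent generating functions agree. This agreement follows from the $h$-vector symmetry implicit in the unimodular triangulation framework of \cite[Section 8]{alt}, which treats ascents and descents symmetrically. Either route gives $f_{i-1}=\mathrm{Nar}_\nu(i)$.
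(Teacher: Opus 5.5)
Your Steps 1--2 give the same reduction as the paper: faces of size $i$ correspond to elements with exactly $i$ lower covers. Step 3, however, has a genuine gap, because neither of your routes turns lower covers into upper covers.

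Route (a) rests on a false premise. The anti-isomorphism $\mathrm{Tam}(\nu)\cong\mathrm{Tam}(\overleftarrow{\nu})^{*}$ of Pr\'eville-Ratelle and Viennot is special to $\delta=\nu$; it does not extend to the whole family. Take $\nu=ENEEN$ and $\delta=(0,0)$. The map $\mu\mapsto\overleftarrow{\mu}$ makes the $\nu$-Dyck lattice isomorphic, not anti-isomorphic, to the $\overleftarrow{\nu}$-Dyck lattice. Yet the $\nu$-Dyck lattice is not self-dual: its bottom element $\nu$ has two upper covers, while its top element $NNEEE$ has only one lower cover. Nor is it anti-isomorphic to the only other alt $\overleftarrow{\nu}$-Tamari lattice, the $\overleftarrow{\nu}$-Tamari lattice. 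That would make it isomorphic to the $\nu$-Tamari lattice, which is not graded, whereas the $\nu$-Dyck lattice is distributive.

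Route (b) is circular as written, since agreement of the ascent and descent generating functions is exactly what must be proved. The ``$h$-vector symmetry'' you invoke also fails for these triangulated balls: in the example above, $(\mathrm{Nar}_\nu(0),\mathrm{Nar}_\nu(1),\mathrm{Nar}_\nu(2))=(1,4,2)$ is not palindromic.

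The missing ingredient is purely lattice-theoretic, and it is what the paper's proof uses. In any finite semidistributive lattice, the number of elements with $i$ lower covers equals the number with $i$ upper covers \cite[Proposition 9 and Corollary 5]{barnard2019}. Since $\altTam{\nu}{\delta}$ is semidistributive, this fact together with your Steps 1--2 and Proposition~\ref{Prop_cc_nar} closes the argument, with no duality of the lattice needed.

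There is also a slip in Step 2. From $a'\vee j=a''\vee j$, join-semidistributivity gives $a=j\vee(a'\wedge a'')$, not $a'\vee(a'\wedge a'')$. Use instead $a'\vee j=a=a'\vee a''$. Then $a=a'\vee(j\wedge a'')=a'\vee j_\downarrow=a'$, because $j\wedge a''=j_\downarrow=j\wedge a'\le a'$, which is the contradiction you wanted.
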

\begin{proof}
    In any semidistributive lattice, the number of elements with $i$ up-covers is equal to the number of elements with $i$ down-covers \cite[Proposition 9 and Corollary 5]{barnard2019}. Since an $(i-1)$-face of the canonical join complex corresponds to a lattice element with $i$ down covers, the result follows by Proposition~\ref{Prop_cc_nar}.
\end{proof}

Finally, we compute the Euler characteristic of the box complex. In particular, we establish that $N_\nu(-1) = -\widetilde{\chi}(\Delta_{\altTam{\nu}{\delta}})$ holds.

\begin{proof}[Proof of Proposition~\ref{theorem_reciprocity}]
Using Corollary~\ref{Lemma_f_vector}, the number of $(i-1)$-dimensional faces of~$\Delta_{\nu, {Tam}}$ is given by $\text{Nar}_\nu(i)$. Therefore,
    $$\text{Nar}_\nu(-1)= \sum \limits_{i\geq 0} (-1)^i \;\text{Nar}_\nu(i)=\sum \limits_{i\geq 0}  (-1)^i f_{i-1}$$
Since, $f_{-1}=1$, this is equal to   
    $$(-1) \big[ -1 + \sum_{i\geq 0} (-1)^if_i]=-\widetilde{\chi}(\Delta_{\altTam{\nu}{\delta}}).$$
\end{proof}
In particular, the reduced Euler characteristic is given by $\widetilde{\chi}(\Delta_{\altTam{\nu}{\delta}})=-Nar_\nu(-1)$.
%*****************************

\begin{remark}\label{remark_polynomial}
    
For $\nu=(NE^m)^n$, the $\nu$-Narayana numbers simplify to Fuss-Narayana numbers~$N_m(n, k) = \frac{1}{n} \binom{mn}{k} \binom{n}{k-1}$. In this case the $\nu$-Narayana polynomial evaluated at~$-1$ becomes $$\frac{1}{n} \sum \limits_{i=1}^n (-1)^{i}  \binom{mn}{i} \binom{n}{i-1}.$$
Using the properties of polynomial coefficient extraction, this alternating sum can be elegantly expressed as the coefficient of $x^{n-1}$ in a Jacobi polynomial $$Nar_\nu(-1) =  \frac{1}{n} [x^{n-1}] (1-x)^{mn} (1+x)^n .$$
\end{remark}

\begin{corollary}
    The reduced Euler characteristic of the canonical join complex of the Dyck lattice is either zero or a signed Catalan number. 
\end{corollary}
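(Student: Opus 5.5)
The plan is to combine Proposition~\ref{theorem_reciprocity} with the explicit polynomial of Remark~\ref{remark_polynomial}, specialized to $m=1$. By Remark~\ref{remark_special_cases_lattice}, the Dyck lattice is the alt $\nu$-Tamari lattice with $\nu=(NE)^n$ and $\delta=0$. By the proof of Proposition~\ref{theorem_reciprocity},
\[
\widetilde{\chi}(\Delta_{\altTam{\nu}{\delta}})=-\mathrm{Nar}_\nu(-1),
\]
and this value does not depend on $\delta$. So it suffices to evaluate the classical Narayana polynomial at $-1$. This also shows the Tamari lattice gives the same value.

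For $m=1$ the Narayana numbers are $N(n,k)=\frac1n\binom nk\binom n{k-1}$. I will not rely on the exact index convention of Remark~\ref{remark_polynomial}, but recompute the alternating sum directly. Write $\binom n{k-1}=\binom n{n-k+1}$ and apply Vandermonde's convolution. This gives
\[
\sum_k(-1)^k\binom nk\binom n{n+1-k}=[x^{n+1}]\,(1-x)^n(1+x)^n=[x^{n+1}]\,(1-x^2)^n .
\]
The same argument with the remark's convention yields $[x^{n-1}](1-x^2)^n$. The two agree up to sign, because the coefficients of $(1-x^2)^n$ at $x^j$ and $x^{2n-j}$ coincide up to the sign $(-1)^n$.

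Now split by the parity of $n$.
\begin{itemize}
\item If $n$ is even, the exponent $n\pm1$ is odd. Since $(1-x^2)^n$ contains only even powers, the coefficient vanishes, so $\widetilde{\chi}=0$.
\item If $n=2k+1$ is odd, the coefficient of $x^{2k}$ is $(-1)^k\binom{2k+1}{k}$. Hence
\[
\mathrm{Nar}_\nu(-1)=\pm\frac{1}{2k+1}\binom{2k+1}{k}=\pm\frac{(2k)!}{k!\,(k+1)!}=\pm C_k ,
\]
and $\widetilde{\chi}$ is a signed Catalan number.
\end{itemize}

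The main obstacle is bookkeeping rather than ideas. The Narayana index (valleys versus peaks, i.e.\ $i$ versus $i-1$) shifts the sign. The remark's normalization, in particular the range of summation and whether $f_{-1}$ is included, must also be matched correctly with $1-\mathrm{Nar}_\nu(-1)$. I would pin these down by checking small cases, e.g.\ $n=1$, where the complex is a single point and $\widetilde{\chi}=0$. Since the claim is only up to sign, the parity argument settles the statement regardless.
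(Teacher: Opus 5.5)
Your proposal is correct and follows the same route as the paper. You use $\widetilde{\chi}=-\mathrm{Nar}_\nu(-1)$ from Proposition~\ref{theorem_reciprocity}, write the alternating Narayana sum as a coefficient of $(1-x^2)^n$, and split by parity. Your direct Vandermonde computation is, if anything, more careful than Remark~\ref{remark_polynomial}, whose $[x^{n-1}]$ extraction matches the alternating sum only up to the sign $(-1)^n$ when $m=1$.

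One small slip is in your side check. For $\nu=(NE)^1$ the lattice has a single element, so the canonical join complex is $\{\emptyset\}$ with $\widetilde{\chi}=-1=-C_0$. It is not a point with $\widetilde{\chi}=0$, which would in fact contradict your own odd case.
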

\begin{proof}
    By Proposition~\ref{theorem_reciprocity} the reduced Euleer characteristic is given by $-\text{Nar}_\nu(-1)$ for $\nu=(NE)^n$. Using Remark~\ref{remark_polynomial}, this is equal to
   $$\text{Nar}_\nu(-1)=-\frac{1}{n}[x^{n-1}](x^2-1)^n.$$
For $n$ even, the coefficient vanishes. For $n=2r+1$ it simplifies to $(-1)^r \frac{1}{2r+1} \binom{2r+1}{r}=(-1)^r C_r$, where $C_r$ is the $r$th Catalan number.
\end{proof}
%\begin{proof}
%    By Proposition~\ref{theorem_reciprocity} the reduced Euler characteristic is given by the alternating sum of classical Narayana numbers, given by $N(n,k)=\frac{1}{n} \binom{n}{k}\binom{n}{k-1}$. Their alternating sum is well-known to be either zero or a signed Catalan number.
%\end{proof}

%\subsubsection{Another way to match elements}
\begin{remark}
In Definition~\ref{definition_nara_poly}, we defined the Narayana polynomial in terms of valleys, following the convention of \cite{tamcom}. We should note that there is an elegant bijection between the faces of $\Delta_{\nu, \text{Dyck}}$ and $\nu$-paths. This is achieved by taking the unique northwest path with peaks $NW$ at the marked boxes and mirroring it, as illustrated in \Cref{Fig_bij_nara}. This approach leads similar results via peaks. %This approach yields similar results to the $\nu$-Narayana polynomial defined via peaks. But the two definitions do not coincide for arbitrary~$\nu$.
\begin{figure}[h!]
  \centering
  \begin{subfigure}[b]{0.45\textwidth}
    \centering
    \resizebox{0.6\linewidth}{!}{\includegraphics[page=1]{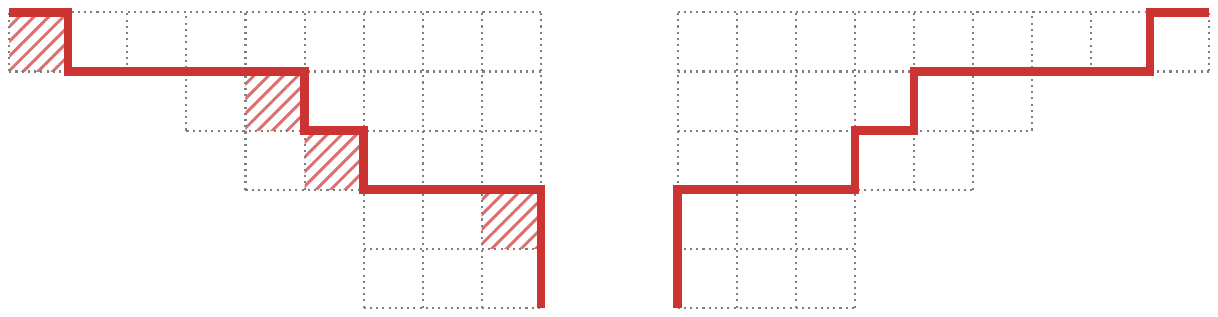}}
    \caption{A face in $\Delta_{\nu,Dyck}$ and a northwest path with peaks at the boxes.}
    \label{Fig_bij_nara1}
  \end{subfigure}\hfill
  \begin{subfigure}[b]{0.45\textwidth}
    \centering
    \resizebox{0.6\linewidth}{!}{\includegraphics[page=1]{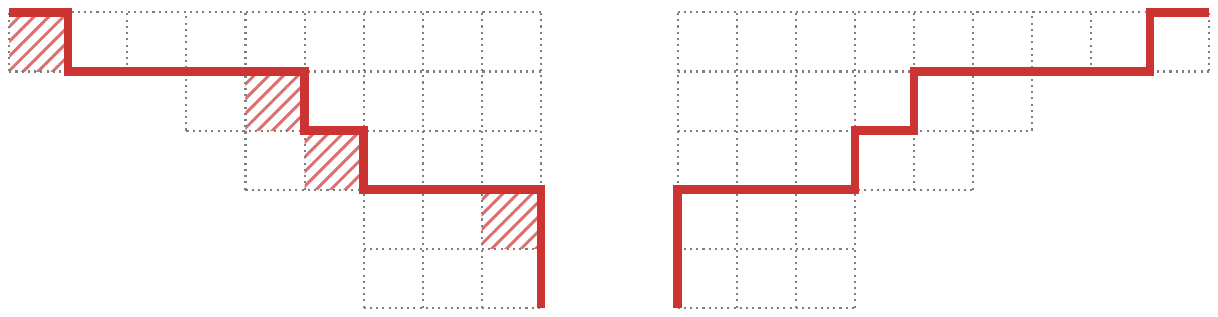}}
    \caption{The corresponding $\nu$-path for the face in \Cref{Fig_bij_nara1}.}
    \label{Fig_bij_nara2}
  \end{subfigure}
 \caption{An alternative bijection form faces of $\Delta_{\nu,Dyck}$ to $\nu$-Dyck paths.}
  \label{Fig_bij_nara}
\end{figure}
\end{remark}

\subsection{Shellability}\label{section::shellability}
Finally, we study the shellability of the canonical join complex of alt $\nu$-Tamari lattices. We start with a brief overview of basic notions following~\cite{Anders1996, Anders1997,wachs2007poset}.
%A facet $F_k$ in a sequence of facets is a \defn{homology facet} if its intersection with the preceding facets, $\left( \bigcup_{i=1}^{k-1} \langle F_i \rangle \right) \cap \langle F_k \rangle$, is equal to $\langle F_k \rangle$. 
A non-pure simplicial complex $\Delta$ is \defn{shellable} if there exists an ordering of its facets~$F_1, F_2, \dots, F_m$ (a \defn{shelling}) such that each $j>1$, $F_j$ intersects with the complex $\Delta_{j-1}$ generated by $F_1,\dots,F_{j-1}$ in a pure $(\dim F_j)-1$ complex.
Each facet $F_j$ in a shelling contains a minimal face, that has not appeared before, given by $R(F_j)=\{x \in F_j : F_j \setminus \{x\} \subseteq F_k \text{ for some } k < j\}.$ We call~$F_j$ a \defn{homology facet} if $R(F_j)=F_j$ holds.

%A facet $F_k$ is a \defn{homology facet} if its closure $\langle F_k \rangle$ is equal to~$\left( \bigcup_{i=1}^{k-1} \langle F_i \rangle \right) \cap \langle F_k \rangle$.

The notion of shellability connects directly to our previous findings. Specifically, Bj\"{o}rner and Wachs showed that every non-pure vertex decomposable complex is shellable \cite[Theorem 11.3]{Anders1997}. In particular, a shelling sequence can be constructed inductively. For a be a decomposing vertex $v$ of $\Delta$. Suppose $F_1, \dots, F_a$ is a shelling of the deletion~$\Delta \setminus \{v\}$ and $E_1, \dots, E_b$ a shelling of the link $\ell k_{\Delta}(v)$. As established in \cite[Lemma 6]{Wachs1999}% (see also \cite{Anders1996})
, the concatenated sequence 
    \[ F_1, \dots, F_a, E_1 \cup \{v\}, \dots, E_b \cup \{v\} \] 
    forms a shelling of $\Delta$. 
Thus, \Cref{vertex_decomposable} yields Corollary \ref{cor_shellable}.

%By \cite[Theorem 11.3]{Anders1997}, every non-pure vertex decomposable complex is shellable. Therefore, \Cref{vertex_decomposable} implies the following.

\begin{repcorollary}
     The canonical join complex of the $\text{alt}$ $\nu$-Tamari lattice is shellable.
\end{repcorollary}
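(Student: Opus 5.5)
The corollary follows by combining \Cref{vertex_decomposable} with the theorem of Bj\"{o}rner and Wachs \cite[Theorem 11.3]{Anders1997}, which states that every (possibly non-pure) vertex decomposable simplicial complex is shellable. By \Cref{realization}, the canonical join complex of $\altTam{\nu}{\delta}$ is isomorphic to the box complex $\Delta_u$ with $u=u(\delta,\nu)$. Shellability is invariant under simplicial isomorphism, so it suffices to show that $\Delta_u$ is shellable. \Cref{vertex_decomposable} gives vertex decomposability, and the cited theorem then yields a shelling.

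To make the shelling explicit, I would unwind the induction of \Cref{section::vd} using the concatenation rule of \cite[Lemma 6]{Wachs1999}.

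1. Choose $q_1$ as in Definition~\ref{def_dec_vertices} and list the boxes $v_1,\dots,v_n$ in its row and column.
2. Shell the deletion $\Delta_u\setminus\{v_1,\dots,v_n\}\cong\Delta_{u'}\ast\{q_1\}$. By induction on the number of columns, this is a shelling of $\Delta_{u'}$ with $q_1$ adjoined to every facet.
3. Add back $v_1,\dots,v_n$ one at a time. At step $k$, append the facets of $\ell k(v_k)\cong\Delta_{u^+}\ast\Delta_{u^-}$ (Lemma~\ref{lemma_link_iso}), each joined with $v_k$.
4. Shell each join $\Delta_{u^+}\ast\Delta_{u^-}$ by ordering pairs of facets lexicographically from shellings of the two factors. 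This is the standard fact that joins of shellable complexes are shellable.

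The only point needing care is that \cite[Lemma 6]{Wachs1999} requires $v$ to be a shedding vertex, so that no facet of the link is a facet of the deletion. For the $v_k$, Lemma~\ref{nofacetoflink} provides exactly this, so each concatenation step is a valid shelling. I expect no real obstacle beyond checking that the base case, a disjoint union of $u_1$ points, is shellable in the non-pure sense, which is immediate.
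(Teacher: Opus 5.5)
Your proposal is correct and follows the paper's own argument: \Cref{vertex_decomposable} combined with \cite[Theorem 11.3]{Anders1997}, with an explicit shelling obtained by unwinding the decomposition through the concatenation rule of \cite[Lemma 6]{Wachs1999}. One small point for your step 3: the link of $v_k$ is taken in $\Delta_u\setminus\{v_{k+1},\dots,v_n\}$. It coincides with $\ell k_{\Delta_u}(v_k)$ by the argument of Lemma~\ref{link}, which is what justifies applying Lemma~\ref{lemma_link_iso} there.
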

Furthermore, as every right interval of the classical Tamari lattice is an alt~$\nu$-Tamari lattice \cite{CCh2024}, we also obtain the following.
\begin{corollary}
The canonical join complex of any right interval of the classical Tamari lattice is vertex decomposable and, consequently, shellable.
\end{corollary}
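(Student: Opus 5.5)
The plan is to reduce the statement to the two facts already established: that alt $\nu$-Tamari lattices contain every right interval of the Tamari lattice, and that their canonical join complexes are vertex decomposable (\Cref{vertex_decomposable}) and hence shellable (Corollary~\ref{cor_shellable}).

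First, let $[x,\hat 1]$ be a right interval of the classical Tamari lattice $\operatorname{Tam}(n)$. By the result of Ceballos and Chenevière \cite{CCh2024} quoted just before the statement, there exist a northeast path $\nu$ and an increment vector $\delta$ with an isomorphism of posets $[x,\hat 1]\cong \altTam{\nu}{\delta}$. Since this is a poset isomorphism, it is automatically a lattice isomorphism. It therefore preserves cover relations, join-irreducibles and joins, hence canonical join representations. Consequently, the canonical join complex of $[x,\hat 1]$ is isomorphic, as a simplicial complex, to $\Delta_{\altTam{\nu}{\delta}}$.

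Some care is needed over which lattice the canonical join complex is taken in. It is intrinsic to the interval regarded as a lattice in its own right, not to its embedding in $\operatorname{Tam}(n)$. The interval is join-semidistributive, as a sublattice of a semidistributive lattice, so its canonical join complex is well defined. We will state explicitly that we mean this intrinsic complex.

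With the isomorphism in hand, \Cref{vertex_decomposable} shows that $\Delta_{\altTam{\nu}{\delta}}$ is vertex decomposable. Vertex decomposability is invariant under simplicial isomorphism, because Definition~\ref{vd} is purely combinatorial: links and deletions correspond under the isomorphism. Hence the canonical join complex of $[x,\hat 1]$ is vertex decomposable. Shellability then follows from \cite[Theorem 11.3]{Anders1997}, exactly as in Corollary~\ref{cor_shellable}.

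The only step that is not routine is the identification of right intervals with alt $\nu$-Tamari lattices. This is imported from \cite{CCh2024}, where it is stated for right intervals $[x,\hat 1]$, so it should be invoked precisely in that form. In particular, the trivial interval and the full lattice $\operatorname{Tam}(n)$ are both covered, with $\delta=\nu$ giving the full Tamari lattice. Everything else is formal transport of structure along an isomorphism.
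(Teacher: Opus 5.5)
Your proposal is correct and follows the paper's argument: the paper likewise obtains the corollary by invoking \cite{CCh2024} to identify every right interval of the Tamari lattice with an alt $\nu$-Tamari lattice, and then applying \Cref{vertex_decomposable} and Corollary~\ref{cor_shellable}. Your remarks on transporting vertex decomposability along the lattice isomorphism, and on using the interval's intrinsic canonical join complex, only make explicit what the paper leaves implicit.
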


\subsubsection{Explicit shelling sequence} 
By suitably restricting the recursive construction, we can provide an explicit description of a shelling. To this end, we label the boxes according to Definition~\ref{def_labeling}.

\begin{definition}[Box-labeling]\label{def_labeling}
Let $q_1$ be the rightmost box in the bottom row of the shape $F_{\delta,\nu}$. Moving upwards, we identify $q_2, \dots, q_n$ by selecting the rightmost box in each row compatible with the chosen boxes below, if possible. This follows~Remark~\ref{remark_uparrow_boxes}. We assign the label $1$ to all boxes that lie in the same row or column as $q_1$, and proceed to label the remaining boxes in the rows and columns of $q_2, \dots, q_n$ accordingly. This is illustrated in \Cref{Fig_shelling}.
\begin{figure}[!h]
    \centering
    \includegraphics[width=0.7\textwidth]{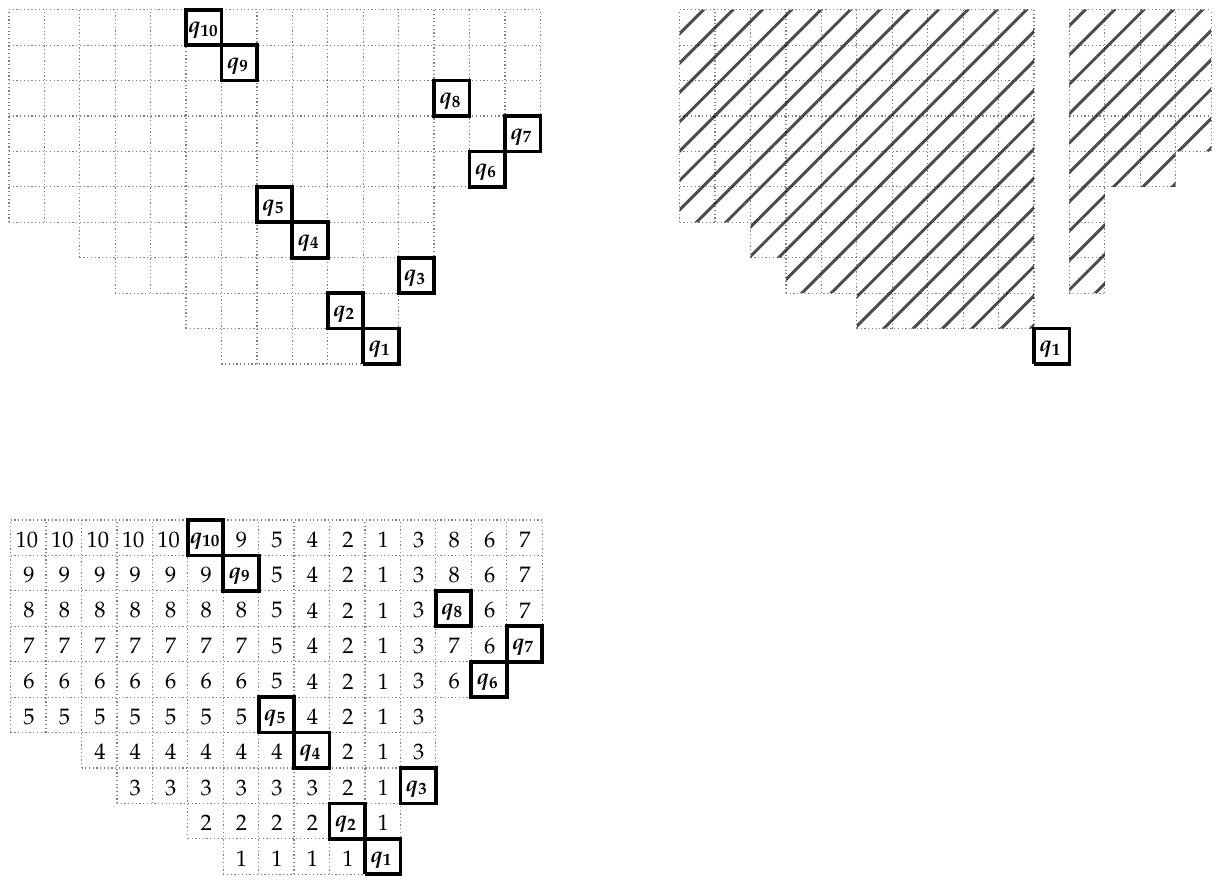} 
    \caption{Box-labeling according to Definition~\ref{def_labeling}.}
    \label{Fig_shelling}
\end{figure}    
\end{definition}
Using the box-labeling from Definition~\ref{def_labeling}, we can describe an explicit shelling of the canonical join complex. This is made precise in Definition~\ref{def_shelling_sequence}.
\begin{observation}\label{remark_comp}
    Note that each box labeled by $i\in[n]$ is compatible to the boxes labeled $q_{1},\dots,q_{i-1}$.
\end{observation}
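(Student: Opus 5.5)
The statement to prove is Observation~\ref{remark_comp}: every box carrying label $i$ is compatible with $q_1,\dots,q_{i-1}$. The plan is to unwind the iterative construction of Definition~\ref{def_labeling} together with the gluing in Remark~\ref{remark_uparrow_boxes}, and to argue by induction on $i$.

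The starting point is the base step of the recursion, as in the proof of Lemma~\ref{lemma_vd_v1tovn}. Since $q_1$ is the rightmost box of the bottom row, it is compatible with every box not in its row or column. Consider a box $b$ northeast of $q_1$. The column of $q_1$ is the lowest column reaching that depth, so the bounding rectangle of $b$ and $q_1$ would need a box strictly to the right of $q_1$ in the bottom row, and no such box exists. Boxes northwest of $q_1$ are compatible by definition, and boxes south of the bottom row do not exist. Deleting the row and column of $q_1$ leaves the glued shape $F_{u'}$, and $\Delta_u\setminus\{v_1,\dots,v_n\}\cong \Delta_{u'}\ast\{q_1\}$. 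Hence every box of $F_{u'}$ is compatible with $q_1$.

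The key step is to check that the gluing respects compatibility, so that the induction can run inside $F_{u'}$. Every box with label $i\ge 2$ lies in $F_{u'}$, and $q_2,\dots,q_m$ are defined there (Remark~\ref{remark_uparrow_boxes}). I would verify that two boxes of $F_u$ outside the row and column of $q_1$ are compatible in $F_u$ if and only if their images are compatible in $F_{u'}$. The order relations (same row or column, NE/SW) are unchanged, because removing one row and one column shifts coordinates monotonically. Containment of the bounding rectangle is also preserved, because unimodality of $u$ implies that a rectangle in $F_u$ crossing the deleted row or column contracts to a rectangle in $F_{u'}$, and conversely. This is exactly the identification already used in Lemma~\ref{lemma_vd_v1tovn}.

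With this in hand, the induction is immediate. Boxes labeled $i$ are, in $F_{u'}$, the boxes labeled $i-1$ for the sequence $q_2,\dots$. By the induction hypothesis applied to $F_{u'}$ (fewer columns), they are compatible with $q_2,\dots,q_{i-1}$ there, and hence in $F_u$. The base step gives compatibility with $q_1$.

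The main obstacle is the rectangle-containment part of the gluing argument, in particular for pairs straddling the deleted column near the peak of $u$. There I would split into cases according to whether the deleted column lies left or right of the peak index $k$, and use that the column of $q_1$ has minimal height among the columns of the horizontal span in question.
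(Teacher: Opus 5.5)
Your argument is correct, and it is essentially the reasoning the paper leaves implicit: iterate the decomposition $\Delta_u\setminus\{v_1,\dots,v_n\}\cong\Delta_{u'}\ast\{q_1\}$ of Lemma~\ref{lemma_vd_v1tovn} along the glued shapes of Remark~\ref{remark_uparrow_boxes}, so that a box labeled $i$, which lies outside the rows and columns of $q_1,\dots,q_{i-1}$, is compatible with all of them. The step you flag as the main obstacle is actually immediate and needs no case split: the deleted column has maximal height $u_c=\max_j u_j$ (not minimal, as you wrote), so it never obstructs a bounding rectangle, and removing the bottom row cannot affect rectangles whose lowest row lies above it.
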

\begin{definition}\label{def_sequences}
Each facet of the canonical join complex $\Delta_{\delta,\nu}$ corresponds to a maximal collection of pairwise compatible boxes in the shape $F_{\delta,\nu}$. With respect to the box-labeling introduced above, a facet $F$ is represented by a sequence $a(F)=(a_1,\dots,a_n)$, where, for each $i\in[n]$,
\[
a_i=
\begin{cases}
q_i & \text{if the facet contains the box labeled } q_i,\\
i   & \text{if the facet contains a box labeled } i,\\
0  & \text{if the facet contains neither a box labeled } i \text{ nor } q_i.
\end{cases}
\]
Notice that this representation is not injective. In particular, distinct facets of $\Delta_{\delta,\nu}$ may be represented by the same sequence.
\end{definition}

\begin{definition}[Shelling sequence]\label{def_shelling_sequence}
    Identify the facets $F_1, F_2, \dots, F_m$ of the canonical join complex $\Delta_{\delta,\nu}$ with their sequences from Definition~\ref{def_sequences}, with possible repetition. We order the facets according to their sequences in reverse lexicographic order, using the relations $q_i > i>0$. We denote the corresponding shelling by~$\mathcal{F}: F_1, F_2, \dots, F_m$, where the initial facet is always $F_1 = \{q_1, \dots, q_n\}$.
\end{definition}

\begin{remark}
    Note that the ordering in Definition~\ref{def_shelling_sequence} is well-defined, since no facet contains two boxes labeled with the same $i \in [n]$. Furthermore, we do not impose any order on the facets corresponding to the same sequence. 
\end{remark}

\begin{example}\label{Fig_ex_lattice2}
 We continue Example~\ref{Fig_ex_lattice} for $\nu=(2,3,0)$ and~$\delta=(3,0)$. The canonical join complex~$\Delta_{\delta,\nu}$ has facets $F_1,\dots,F_8$, illustrated in~\cref{ex_complex}.
    \begin{figure}[!h]
    \centering
    \includegraphics[width=1\textwidth]{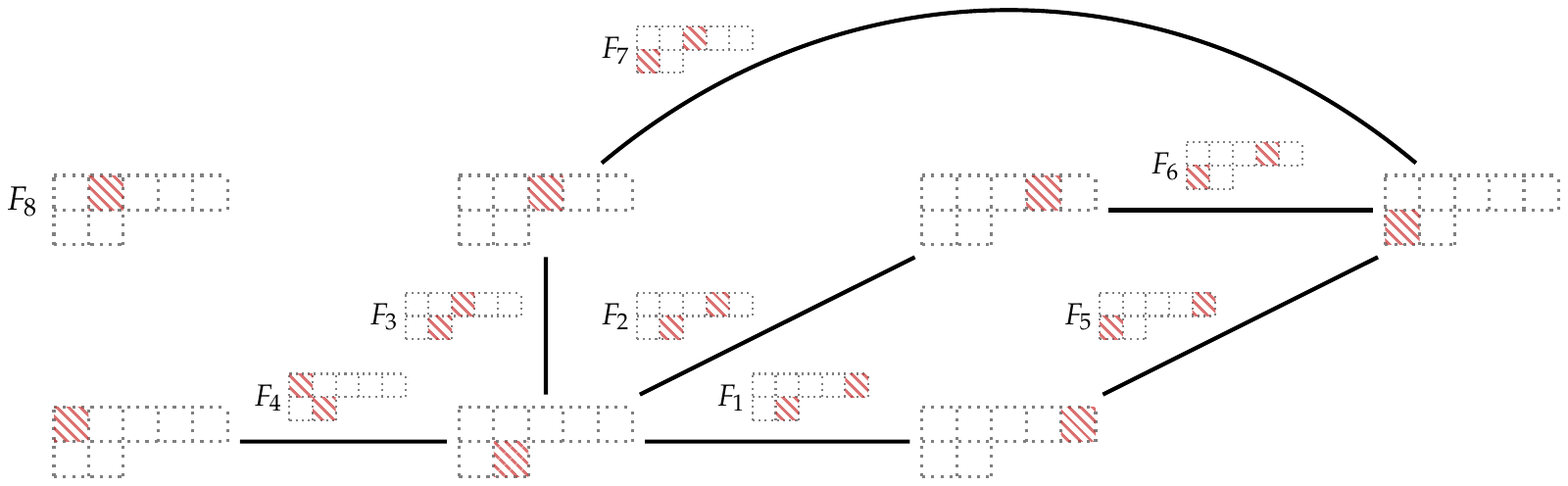} 
    \caption{The canonical join complex $\Delta_{\delta,\nu}$ for $\nu=(2,3,0)$ and~$\delta=(3,0)$. The lattice is illustrated in \cref{Fig_ex_lattice}.}
    \label{ex_complex}
\end{figure}
Following Definition~\ref{def_sequences}, we identify the facets $F_1,\dots,F_8$ with the sequences $a(F_1)=(q_1,q_2),\, a(F_2)=a(F_3)=a(F_4)=(q_1,2),$ $ a(F_5)=(1,q_2), a(F_6)= a(F_7)=(1,2)$, $ a(F_8)=(1,0)$, as illustrated in \Cref{ex_sequences}. Ordering these sequences in reverse lexicographic order yields: $$(q_1,q_2)> (q_1,2)>(1,q_2)> (1,2)> (1,0).$$
    \begin{figure}[!h]
    \centering
    \includegraphics[width=.9\textwidth]{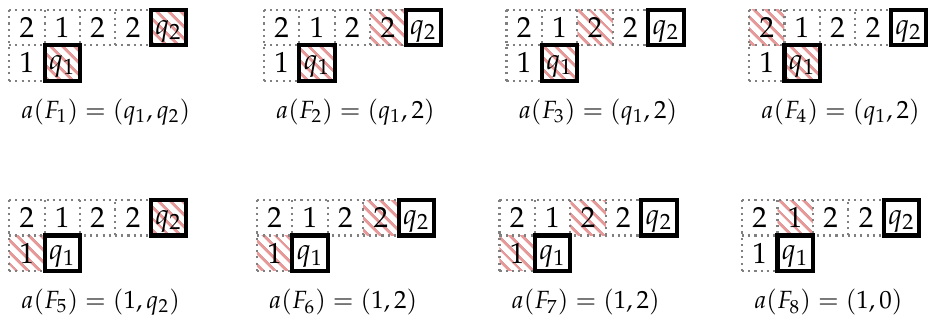} 
    \caption{Sequences corresponding to the facets of \cref{ex_complex}.}
    \label{ex_sequences}
\end{figure}

One can verify that any sequence preserving this order forms a shelling of $\Delta_{\delta,\nu}$. These are listed explicitly below:
\begin{align*}
\mathcal{F}_{1}  &: F_1, F_2, F_3, F_4, F_5, F_6,F_7,F_8 & \mathcal{F}_{7}  &: F_1, F_2, F_3, F_4, F_5, F_7,F_6,F_8 \\
\mathcal{F}_{2}  &: F_1, F_2, F_4, F_3, F_5, F_6,F_7,F_8 & \mathcal{F}_{8}  &: F_1, F_2, F_4, F_3, F_5, F_7,F_6,F_8 \\
\mathcal{F}_{3}  &: F_1, F_3, F_2, F_4, F_5, F_6,F_7,F_8 & \mathcal{F}_{9}  &: F_1, F_3, F_2, F_4, F_5, F_7,F_6,F_8 \\
\mathcal{F}_{4}  &: F_1, F_3, F_4, F_2, F_5, F_6,F_7,F_8 & \mathcal{F}_{10} &: F_1, F_3, F_4, F_2, F_5, F_7,F_6,F_8 \\
\mathcal{F}_{5}  &: F_1, F_4, F_2, F_3, F_5, F_6,F_7,F_8 & \mathcal{F}_{11} &: F_1, F_4, F_2, F_3, F_5, F_7,F_6,F_8 \\
\mathcal{F}_{6}  &: F_1, F_4, F_3, F_2, F_5, F_6,F_7,F_8 & \mathcal{F}_{12} &: F_1, F_4, F_3, F_2, F_5, F_7,F_6,F_8.
\end{align*}
\end{example}

\begin{proposition}\label{prop_shelling}
    Each ordering $\mathcal{F}$, introduced in Definition~\ref{def_shelling_sequence}, is a shelling for the canonical join complex of the alt~$\nu$-Tamari lattice.
\end{proposition}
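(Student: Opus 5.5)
The plan is to prove \Cref{prop_shelling} by induction on the number $n$ of labels, i.e.\ on the number of steps in the iterated decomposition of Remark~\ref{remark_uparrow_boxes}. The aim is to show that the reverse lexicographic order of Definition~\ref{def_shelling_sequence} is exactly the order produced by the recursive shelling construction of \cite[Lemma 6]{Wachs1999}, applied along the decomposing vertices $v_1,\dots,v_n$ of \Cref{section::vd}.

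First I would record the structure coming from the first coordinate $a_1$. The boxes labeled $1$ are precisely $q_1$ and the vertices $v_1,\dots,v_n$ of Definition~\ref{def_dec_vertices}. Any two of them share a row or column, so each facet contains at most one of them. By Lemma~\ref{nofacetoflink}, a facet avoiding all the $v_j$ contains $q_1$, since it can be extended by $q_1$. Hence $a_1\in\{q_1,1\}$ for every facet, except possibly in degenerate cases like $a(F_8)=(1,0)$. These degenerate cases are handled the same way, because the value $0$ only appears in later coordinates.

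Facets with $a_1=q_1$ are exactly the facets of $\Delta_u\setminus\{v_1,\dots,v_n\}\cong\Delta_{u'}\ast\{q_1\}$ (Lemma~\ref{lemma_vd_v1tovn}). On these facets, the remaining coordinates $(a_2,\dots,a_n)$ are the sequence of the corresponding facet of $\Delta_{u'}$ with respect to its own box-labeling. This holds because the glued shape $F_{u'}$ inherits the labels $2,\dots,n$ and the boxes $q_2,\dots,q_n$. By the inductive hypothesis, reverse lexicographic order on $(a_2,\dots,a_n)$ shells $\Delta_{u'}$, and joining with the single vertex $q_1$ preserves shellings. So these facets come first in $\mathcal{F}$, in a valid shelling order.

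Next, each remaining facet contains exactly one $v_j$, and I would then add the vertices $v_j$ one at a time as in the proof that $\Delta_u\setminus\{v_k,\dots,v_n\}$ is vertex decomposable. At each step, \cite[Lemma 6]{Wachs1999} lets us append a shelling of $\ell k(v_j)\cong\Delta_{u^+}\ast\Delta_{u^-}$ (Lemma~\ref{lemma_link_iso}), with each facet coned by $v_j$. Two points need checking. First, facets containing different $v_j$ can be interleaved arbitrarily, since they all share $a_1=1$; this requires that the shelling condition does not depend on the order among the $v_j$. I expect this to follow because any two $v_j$ are incompatible, so the links are attached to the common deletion independently. Second, the induced order on link facets is again reverse lexicographic in $(a_2,\dots,a_n)$.

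The second point is the main obstacle. I must show that the box-labeling of $F_u$, restricted to the boxes compatible with $v_j$ and transported to $F_{u^+}\ast F_{u^-}$, is compatible with the box-labelings of $F_{u^+}$ and $F_{u^-}$. Concretely, the relative order $q_i>i>0$ must be respected on the pieces, and a reverse lexicographic order on a join must restrict to product-type orders that shell the join. I would first try to prove directly, using Observation~\ref{remark_comp}, that the $q_i$ compatible with $v_j$ are the distinguished boxes of the glued shapes. Failing that, I would instead verify the shelling condition directly. For $F_j$ with sequence $a$ and any earlier $F_k$, I would exhibit a facet $F_l$ with $l<j$ such that $F_j\setminus\{x\}\subseteq F_l$ and $F_k\cap F_j\subseteq F_l$. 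The facet $F_l$ would be obtained by replacing the box $x$ with label $i$ at the first coordinate where $a$ is smaller by the box $q_i$, which is compatible with lower labels by Observation~\ref{remark_comp}, and completing to a facet. That completion makes the sequence of $F_l$ strictly larger, so $F_l$ comes earlier in $\mathcal{F}$.
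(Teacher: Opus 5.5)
\textbf{There is a genuine gap, at exactly the step you flagged, and it cannot be closed.} With ties broken arbitrarily, as Definition~\ref{def_shelling_sequence} allows, that step is false, and so is the proposition.

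\emph{Your reduction is correct.} It is in fact an equivalence. The box $q_1$ is compatible with every box outside its row and column, and the boxes labeled $1$ are pairwise incompatible. Hence every facet contains $q_1$ or exactly one box labeled $1$. The facets containing $q_1$ are $\{q_1\}\cup G$ for facets $G$ of $\Delta_{u'}$, with the inherited labelling. For $v$ labeled $1$ and a facet $E$ of $\ell k_{\Delta_u}(v)$, the set $E\cup\{q_1\}$ is a face, so $E$ already lies in $\Delta_{j-1}$. Therefore $\langle \{v\}\cup E\rangle\cap\Delta_{j-1}=\langle E\rangle\cup(\{v\}\ast(\langle E\rangle\cap\Gamma_v))$, where $\Gamma_v$ is generated by the link facets of $v$ listed earlier. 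So interleaving different $v$ is harmless. Consequently $\mathcal F$ is a shelling exactly when, for each such $v$, ordering the facets of $\ell k_{\Delta_u}(v)$ by $(a_2,\dots,a_n)$ shells the link.

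\emph{Counterexample.} Take the $2$-Dyck lattice, $\nu=(NE^2)^4$ and $\delta=(0,0,0,0)$, with shape $F_{(1,1,2,2,3,3)}$. Write boxes as (row, column) from the top left.
\begin{itemize}
\item Column heights increase, so faces are exactly the strict NW--SE chains.
\item The labelling gives $q_1=(3,6)$, $q_2=(2,5)$, $q_3=(1,4)$. Label $1$ goes to $(3,5),(2,6),(1,6)$, label $2$ to $(2,3),(2,4),(1,5)$, and label $3$ to $(1,1),(1,2),(1,3)$.
\item No box labeled $1$ is compatible with $q_2$, and a $3$-chain through $q_3$ is forced to be $\{q_1,q_2,q_3\}$. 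So the only facets before sequence $(1,2,3)$ are those containing $q_1$.
\item The five facets $\{(1,j),(2,k),(3,5)\}$ with $j<k\le 4$ all have sequence $(1,2,3)$. List $\{(1,1),(2,3),(3,5)\}$ and then $\{(1,2),(2,4),(3,5)\}$ first.
\item The second meets the earlier complex in $\langle\{(1,2),(2,4)\}\rangle\cup\langle\{(3,5)\}\rangle$. The edge lies in the earlier facet $\{(1,2),(2,4),(3,6)\}$. Neither $\{(1,2),(3,5)\}$ nor $\{(2,4),(3,5)\}$ lies in any earlier facet.
\end{itemize}
This intersection is not pure.

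\emph{Where your two strategies break.}
\begin{itemize}
\item \emph{Labels do not transport.} The link here is $\Delta_{u^+}$ with $u^+=(1,1,2,2)$, whose distinguished box is $(2,4)$. In $F_u$, however, $(2,4)$ carries the plain label $2$, tied with $(2,3)$, because $q_2$ is not in the link. So the labels do not pass to $F_{u^\pm}$, and the induced order cannot see the distinction that matters.
\item \emph{The fallback has nothing to act on.} The offending earlier facet has the same sequence, so there is no coordinate at which to swap in $q_i$.
\item \emph{Observation~\ref{remark_comp} is misapplied.} It gives compatibility of $q_i$ only with boxes of label $>i$, not $<i$. For example, take $u=(2,4,1)$. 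There $F_j=\{(1,1),(2,2)\}$ and $F_k=\{(2,1),(3,2),(1,3)\}$ first differ at $i=2$, yet $q_2=(2,1)$ shares a row with $(2,2)$.
\end{itemize}

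\emph{Comparison with the paper.} The paper's proof uses the smallest plain label $i_0$, for which $(F_j\setminus\{x\})\cup\{q_{i_0}\}$ is indeed a face. It then concludes from this single codimension-one face in $\Delta_{j-1}$. In the example that face, $\{(1,2),(2,4)\}$, exists, yet purity fails. So the condition you correctly formulated, quantifying over all earlier $F_k$, is what is missing there too.

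To get a true statement you must at least restrict the order inside tie classes. Listing the label-$2$ box from right to left repairs this example. Alternatively, use the recursive order that \cite[Lemma 6]{Wachs1999} attaches to the vertex decomposition behind \Cref{vertex_decomposable}. That order is a shelling, but it generally differs from Definition~\ref{def_shelling_sequence}.
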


\begin{proof}
Let $\mathcal{F}: F_1, F_2, \dots, F_m$ be an ordering of the facets of the canonical join complex $\Delta_{\delta,\nu}$ as in Definition~\ref{def_shelling_sequence}, and identify each facet $F_j$ with the sequence $a_j$. For~$j > 1$, let $\Delta_{j-1}$ denote the complex generated by $F_1,\dots,F_{j-1}$. To show that $F_j$ intersects $\Delta_{j-1}$ in a pure $(\dim(F_j)-1)$-dimensional complex, it suffices to show that each facet $F_j$ contains a unique vertex $v \in F_j$ such that the face $F_j \setminus \{v\}$ has already appeared in a previous facet of the ordering. 

Since $F_j$ is a facet and $a_{j-1}>a_j$, the sequence $a_j$ contains at least one index $i$ such that the $i$th entry of $a_j$ is equal to $i$. Let $v$ be the vertex of $F_j$ labeled by the smallest label $i\in\{1,\dots,n\}$ appearing in $F_j$. Because $i$ is minimal, the first~$i-1$ entries of $a_j$ are in $\{0,q_1,\dots,q_{i-1}\}$ and by construction $q_i$ is compatible with~$q_1,\dots,q_{i-1}$. By Observation~\ref{remark_comp}, $q_i$ is compatible with all other vertices of $F_j$, corresponding to labels $\geq i+1$. Hence~$F_j\setminus\{v\} \cup\{q_i\}$ is a face of the complex. Now, every facet containing the face $F_j\setminus\{v\} \cup\{q_i\}$ is listed before~$F_j$, since its sequence is of the form $(a_{j,1},\dots a_{j,i-1},q_i,\dots)$, appearing before~$a_j$ in reverse lexicographic order.
\end{proof}

%\subsubsection{Proof of~\Cref{theorem_wedge}}
%\subsubsection{Enumeration of homology facets in general}
\subsubsection{An invariance property}
Our next purpose is to prove that, for a fixed path\footnote{We associate the northeast path $NE^{\nu_1}\dots NE^{\nu_n}$ with the sequence $\nu=(\nu_1,\dots,\nu_n)$.}~$\nu=NE^{\nu_1}\dots NE^{\nu_n}$, the canonical join complexes of all alt $\nu$-Tamari lattices have the same number of homology facets of size $(n-1)$.

\begin{convention}\label{remark_shelling}
We use the convention of listing facets in the shelling corresponding to the same sequence according to their boxes from \textbf{right to left}.
\end{convention}

\begin{definition}\label{def_tuple}
Let $\nu = (\nu_1, \dots, \nu_n)$, and let $F$ be a facet of the canonical join complex $\Delta_{\delta,\nu}$. The facet $F$ corresponds to a set of pairwise compatible boxes (shaded boxes) of shape $F_{\delta,\nu}$ consisting of $n-1$ rows. We consider row by row from bottom to top. Within each row, we enumerate the boxes from right to left that are compatible with the shaded boxes in the rows below. We identify $F$ with the tuple~$a=(a_1, \dots, a_{n-1})$, where~$a_i$ denotes the label of the marked box in row $i$. This construction is illustrated in~\Cref{Fig_sequencehf}.

    \begin{figure}[!h]
    \centering
    \includegraphics[width=0.5\textwidth]{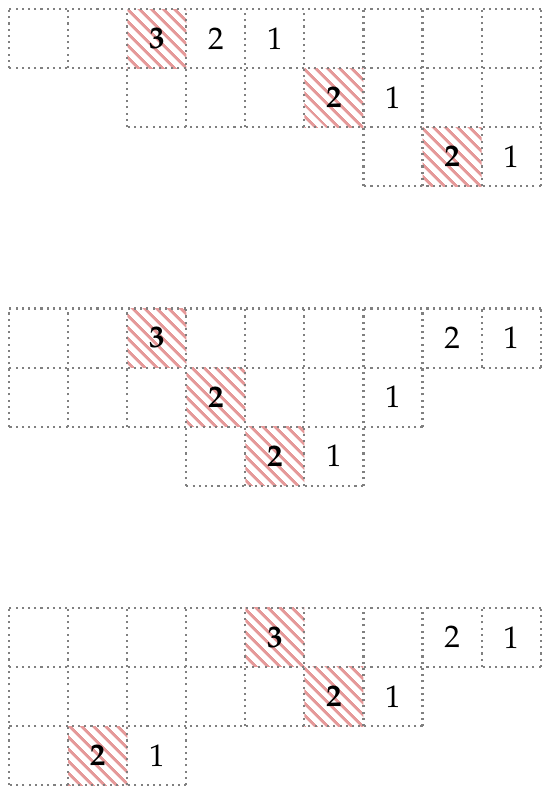} 
    \caption{Facet for $a=(2,2,3)$ in the shape $F_{\delta,\nu}$ for $\nu=(3,4,2,1)$ and $\delta=(1,2,0)$.}
    \label{Fig_sequencehf}
\end{figure}
\end{definition}

The following map is the key for the proof of \Cref{theorem_wedge}.

\begin{definition}
Let $\nu = (\nu_1, \dots, \nu_n)$ and let $\delta, \delta'$ be increment vectors with respect to $\nu$. Let 
$$\varphi : \operatorname{Facets}_{n-1}(\Delta_{\delta,\nu}) \longrightarrow \operatorname{Facets}_{n-1}(\Delta_{\delta',\nu})$$
be the map sending each $(n-1)$-facet $F \in \operatorname{Facets}_{n-1}(\Delta_{\delta,\nu})$ to the unique $(n-1)$-facet of $\Delta_{\delta',\nu}$ associated with the same tuple under Definition~\ref{def_tuple}.
\end{definition}

\begin{remark}
    While the set $\operatorname{Facets}_{n-1}(\Delta_{\delta,\nu})$ may be empty in general, it is guaranteed to be non-empty whenever $\nu_i \ge 1$ for all $i$.
\end{remark}

\begin{lemma}\label{lemma_bijection_labels}
    The map $\varphi$ is well-defined and a bijection.
\end{lemma}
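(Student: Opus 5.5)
The plan is to show that the tuples realized by $(n-1)$-facets do not depend on $\delta$. Concretely, I would prove that a tuple $a=(a_1,\dots,a_{n-1})$ arises from an $(n-1)$-facet of $\Delta_{\delta,\nu}$ if and only if $1\le a_i\le c_i(\nu)$ for all $i$, where the bounds $c_i(\nu)$ depend only on $\nu$. If this holds, $\varphi$ is well defined, since the target facet exists. It is then a bijection, because its inverse is the map of the same kind in the opposite direction.

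First I would settle injectivity of the encoding $F\mapsto a(F)$ on $(n-1)$-facets of a fixed complex. Two boxes in the same row are incompatible, and the shape has $n-1$ rows, so an $(n-1)$-facet has exactly one box in each row. Processing rows bottom to top, the set of boxes in row $i$ compatible with the already chosen boxes below is determined by those choices. Its right-to-left enumeration is therefore determined as well, and $a_i$ recovers the box in row $i$. Hence each tuple corresponds to at most one $(n-1)$-facet, which gives the uniqueness in the definition of $\varphi$.

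The key step is to count the available boxes in row $i$ given compatible choices in rows $1,\dots,i-1$. Describe $F_{\delta,\nu}$ row by row via $\check\nu$ and $\hat\nu$. Passing from the bottom row to the row above, the right boundary (from $\check\nu$) moves right by $\delta$-steps and the left boundary (from $\hat\nu$) moves left by $\nu-\delta$ steps, so row lengths grow by $\nu_j$ in total. A chosen box $b$ in a lower row kills, in higher rows, its own column and every box northeast of $b$ whose bounding rectangle with $b$ lies in the shape. I would argue that each chosen box removes a block of boxes whose effect on the count of survivors in row $i$ depends only on the $\nu_j$ and not on the split into $\delta_j$ and $\nu_j-\delta_j$. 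The mechanism is that the northeast-killed region is bounded by the $\check\nu$ staircase, while the northwest side is never killed because northwest pairs are compatible. Doing this inductively would give $c_i = 1+\sum_{j\le i}(\nu_{n-j+1}-1)$ or a similar expression (the exact indexing to be fixed from the figure), independent of $\delta$ and of the particular earlier choices. One also has to check that every survivor extends to an $(n-1)$-facet, which holds as long as $c_{i+1}\ge 1$ remains true above it.

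I expect this counting step to be the main obstacle, specifically showing that the number of survivors in row $i$ does not depend on which compatible boxes were chosen below. To handle it, I would try the gluing trick from Lemma~\ref{lemma_link_iso}. Choosing a box in a row deletes its column and glues the northwest and northeast compatible regions into a smaller unimodal shape of the same type. After removing the bottom row, this shape is again of the form $F_{\delta'',\nu''}$, with $\nu''$ obtained from $\nu$ by a rule independent of $\delta$. Induction on $n$ then makes both the existence and the count of later rows independent of $\delta$.
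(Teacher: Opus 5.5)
There is a genuine gap. Your reduction is sound: the encoding $F\mapsto a(F)$ is injective on facets of size $n-1$, so it suffices to show that the set of realized tuples depends only on $\nu$. But the characterization you set out to prove is false. You claim $1\le a_i\le c_i(\nu)$ with bounds independent of the earlier choices, so the step you flag as the main obstacle cannot be done.

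Take the running example $\nu=(2,3,0)$, $\delta=(3,0)$. The shape is left-justified, with a bottom row of $2$ boxes and a top row of $5$. The right bottom box ($a_1=1$) kills only its own column, leaving $4$ compatible top boxes. The left one ($a_1=2$) also kills the box northeast of it, leaving $3$. The realized tuples are $(1,1),\dots,(1,4),(2,1),(2,2),(2,3)$, the seven facets of size two in that example. They do not form a product of intervals, and $\delta=(0,0)$ gives the same set. In particular, the block killed by a chosen box is not determined by the $\nu_j$ alone; its size depends on where the box sits.

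The true invariant is that the survivor count depends on the earlier choices only through their labels, not on $\delta$. This is what the paper's proof relies on when it says the top row has the same number of compatible positions in both shapes. Your own description of the killed region gives it once made quantitative:
\begin{enumerate}
\item Write row $k$ as a column interval $[\ell_k,r_k]$, with $r_k$ nondecreasing (from $\check\nu$) and $\ell_k$ nonincreasing (from $\hat\nu$). Its length $L_k$ then depends only on $\nu$.
\item A chosen box at column $c_j$ in row $j$ kills, in every higher row, exactly the columns $[c_j,r_j]$.
\item The columns it newly kills are the compatible positions of row $j$ weakly right of $c_j$, i.e.\ exactly $a_j$ of them.
\item Hence row $i+1$ has $L_{i+1}-(a_1+\dots+a_i)$ compatible positions.
\end{enumerate}
So the realized tuples are exactly those with $a_k\ge 1$ and $a_1+\dots+a_k\le L_k$ for all $k$. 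This set is independent of $\delta$, which gives well-definedness, and whether a partial choice extends is governed by the same inequalities.

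Your gluing fallback can also be made to work, but only if $\nu''$ is allowed to depend on the chosen label. After choosing a bottom box with label $a_1$, the glued shape has rows of lengths $L_k-a_1$ for $k\ge 2$. Gluing preserves compatibility and the right-to-left order of compatible boxes, so labels are preserved. Induction on the statement ``the tuple set depends only on $\nu$'' then closes the argument. As you framed it, however, the gluing was meant to prove that the count is independent of the earlier choices, and it actually refutes that.
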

\begin{proof}
If $\operatorname{Facets}_{n-1}(\Delta_{\delta,\nu})$ is empty, it is impossible to mark a box in each row of the shape $F_{\delta,\nu}$ in a compatible way. This implies that $\operatorname{Facets}_{n-1}(\Delta_{\delta',\nu})$ is also empty, and thus the statement holds trivially.

Let $F \in \operatorname{Facets}_{n-1}(\Delta_{\delta,\nu})$ be a facet of size $(n-1)$. Then $F$ contains one box in each row.
We proceed by induction on $n$, the length of $\nu$. The base case $n=1$ is trivially true.
Now, assume we have chosen the $n-2$ bottom boxes $b'_1,\dots,b'_{n-2}$ according to the definition of $\varphi$. 
Consider the largest rectangles inside the shape~$F_{\delta',\nu}$ that have the boxes~$b'_1,\dots,{b'_{n-2}}$ as their bottom-left corners. By the definition of compatibility, no box can be marked in these rectangles.
By construction, the top row contains the same number of compatible positions in both shapes. Hence, there is a unique position for the marked box in the top row, which establishes the claim.
\end{proof}

\begin{example}
An example of the bijection $\varphi$ for $\nu=(3,4,2,1)$ and $\delta_{left}=(0,0,0)$, $\delta_{middle}=(1,2,0)$, $\delta_{right}=(4,2,0)$ is illustrated in~\Cref{Fig_varphi}.

\begin{figure}[!h]
  \centering
  \begin{subfigure}[b]{0.3\textwidth}
    \centering
    \resizebox{1\linewidth}{!}{\includegraphics[page=1]{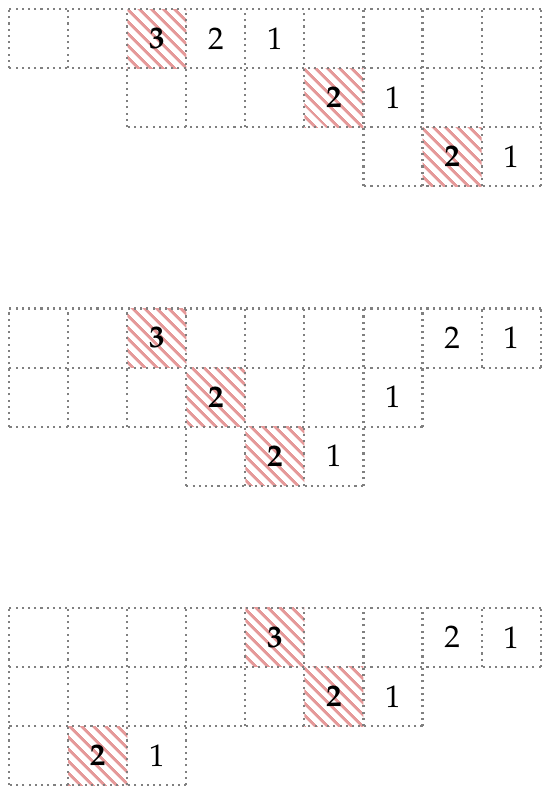}}
    \caption{$\delta_{left}=(0,0,0)$}
    \label{Fig_varphi1}
  \end{subfigure}\hfill
  \begin{subfigure}[b]{0.3\textwidth}
    \centering
    \resizebox{1\linewidth}{!}{\includegraphics[page=1]{figures/Fig_varphi2.pdf}}
    \caption{$\delta_{middle}=(1,2,0)$}
    \label{Fig_varphi2}
  \end{subfigure}\hfill
    \begin{subfigure}[b]{0.3\textwidth}
    \centering
    \resizebox{1\linewidth}{!}{\includegraphics[page=1]{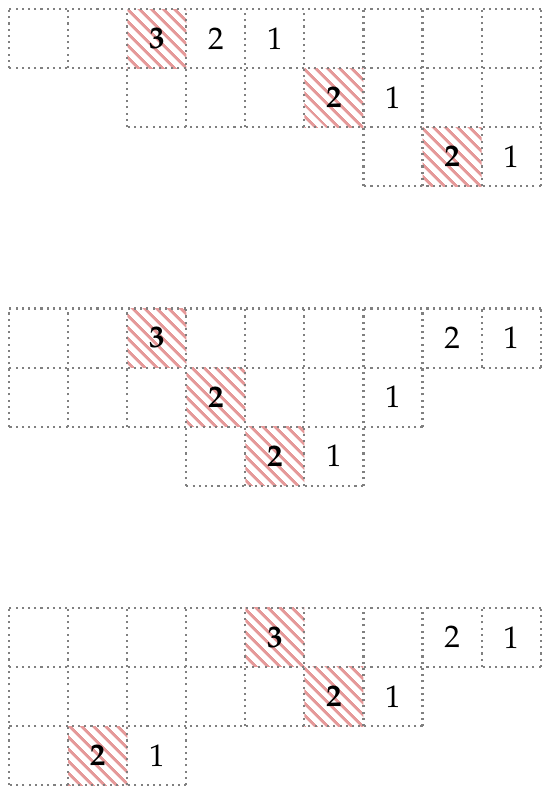}}
    \caption{$\delta_{right}=(4,2,0)$}
    \label{Fig_varphi3}
  \end{subfigure}\hfill
  \caption{The bijection $\varphi$ between $(n-1)$-facets of $\Delta_{\delta,\nu}$ for fixed~$\nu=(3,4,2,1)$ and varying $\delta$.}
  \label{Fig_varphi}
\end{figure}
\end{example}

\begin{theorem}\label{thm_hf_invariance}
Let $\nu$ be a finite northeast path. Then the number of homology facets of size $(n-1)$ for the canonical join complex $\Delta_{\delta,\nu}$ is invariant under the choice of the increment vector $\delta$.
\end{theorem}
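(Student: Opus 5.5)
The plan is to show that the bijection $\varphi$ of Lemma~\ref{lemma_bijection_labels} sends homology facets of size $(n-1)$ to homology facets of size $(n-1)$. We fix the shelling $\mathcal{F}$ of Definition~\ref{def_shelling_sequence}, with ties broken by Convention~\ref{remark_shelling}. Since $\varphi$ is a bijection between $\operatorname{Facets}_{n-1}(\Delta_{\delta,\nu})$ and $\operatorname{Facets}_{n-1}(\Delta_{\delta',\nu})$, it then suffices to prove
\[
F \text{ is a homology facet } \iff \varphi(F) \text{ is a homology facet}.
\]
By the description $R(F)=\{x\in F : F\setminus\{x\}\subseteq F_k \text{ for some } k<j\}$, the facet $F$ is a homology facet exactly when every box $x\in F$ can be swapped for another box that produces an earlier facet. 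The proof therefore reduces to a row-by-row local criterion that can be read off from the tuple $a=(a_1,\dots,a_{n-1})$ of Definition~\ref{def_tuple}.

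\textbf{Step 1: a local criterion.} Let $F$ be a facet with one box in each row. For the box $x$ in row $i$, I would show that $F\setminus\{x\}$ lies in an earlier facet if and only if $a_i>1$. In other words, $x$ is not the rightmost box in row $i$ that is compatible with the shaded boxes below it. For the forward direction, if $a_i>1$, replace $x$ by the box with label $a_i-1$ in row $i$. This box is still compatible with the rows below by construction of the enumeration. One must check that it is also compatible with the boxes in rows above, or else that the face $F\setminus\{x\}$ extends to some facet placed earlier. This is exactly the argument of Proposition~\ref{prop_shelling}: a box further right, or a $q_i$-type box, gives a larger sequence in reverse lexicographic order, or comes earlier under Convention~\ref{remark_shelling}. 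For the converse, if $a_i=1$, every facet containing $F\setminus\{x\}$ must place in row $i$ either nothing or a box further left, because the rows below are fixed. Using the ordering, such a facet comes later, so $x\notin R(F)$.

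\textbf{Step 2: transfer along $\varphi$.} By Step 1, an $(n-1)$-facet is a homology facet precisely when $a_i\ge 2$ for all $i$. This condition depends only on the tuple $a$, and $\varphi$ preserves tuples by definition. The proof of Lemma~\ref{lemma_bijection_labels} shows that each row has the same number of compatible positions in both shapes. Hence the condition $a_i\ge2$ means the same thing in $\Delta_{\delta,\nu}$ and in $\Delta_{\delta',\nu}$, and the theorem follows.

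\textbf{Main obstacle.} The hard part is Step 1, and specifically showing that the box labeled $a_i-1$ stays compatible with the shaded boxes in the rows \emph{above} row $i$. Moving a box to the right can create a northeast/southwest incompatibility with a higher box whose bounding rectangle now lies inside the shape. My first attempt would use unimodality of $u$ and the fact that the higher boxes were chosen compatible with $x$. If the rectangle spanned by the new box and a higher box lay inside $F_{\delta,\nu}$, one could enlarge it to contain $x$, contradicting the compatibility of $x$ with that higher box. If this fails, the fallback is to extend $F\setminus\{x\}\cup\{x'\}$ to some facet and check only that this facet precedes $F$ in $\mathcal{F}$, allowing changes in higher rows that increase the sequence.
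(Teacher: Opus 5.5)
Your proposal follows the paper's proof: both show that $\varphi$ preserves the row-wise property that each box of an $(n-1)$-facet can be slid to a compatible position further right in its row (your criterion $a_i\ge 2$). Your rectangle-enlargement argument does settle compatibility with the higher rows, since the rows of $F_u$ are nested intervals, and you could state it as settled rather than as a tentative first attempt.

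The step where you say the shifted facet comes earlier ``exactly as in Proposition~\ref{prop_shelling}'' is not literally that argument, and it is the point the paper also leaves implicit. What makes it work is that in an $(n-1)$-facet no box of row $i$ lies in the column of some $q_k$ with $k<i$: otherwise the row-$k$ box would have to sit to the right of $q_k$, hence in an earlier $q$-column, and this descends to row $1$, where nothing lies right of $q_1$. So every such box carries label $i$ in the box-labeling of Definition~\ref{def_labeling}. The sequences of $F$ and of any facet containing $F\setminus\{x\}$ therefore differ only in coordinate $i$, and Convention~\ref{remark_shelling} decides the tie.
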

\begin{proof}
By Lemma~\ref{lemma_bijection_labels}, the mapping $\varphi$ establishes a bijection between the $(n-1)$-facets. It remains to be shown that $\varphi$ restricts to a bijection between $(n-1)$-homology facets to $(n-1)$-homology facets. Let $F_1, \dots, F_m$ denote a shelling of~$\Delta_{\delta,\nu}$ as introduced in Remark~\ref{remark_shelling}, ordered from right to left for boxes sharing the same label.

Let $F\in \operatorname{Facets}_{n-1}(\Delta_{\delta,\nu})$ be a homology facet of size $n-1$. Then $F$ contains a box in each row. Moreover, if $F$ is a homology facet then each box $b$ in it can be replaced by a box to its right in the same row, forming a new facet $F'$ (because there is a facet $F'$ containing $F\setminus \{b\}$ before in the shelling). The image $\varphi(F) \in \operatorname{Facets}_{n-1}(\Delta_{\delta',\nu})$ also satisfies this property by construction of $\varphi$. Therefore, $\varphi$ is a homology facet.
\end{proof}

\begin{remark}
    According to \cite[Theorems 3.4 and 4.1]{Anders1996}, a shellable complex is homotopy equivalent to a wedge of spheres, where each $r$-sphere corresponds to an $r$-dimensional homology facet. Therefore, by Theorem~\ref{thm_hf_invariance} the number of~$(n-2)$-dimensional spheres for the canonical join complex $\Delta_{\delta,\nu}$ is invariant under~$\delta$.
%Consequently, to prove \Cref{theorem_wedge}, it suffices to establish Theorem~\ref{thm_hf_invariance}.
\end{remark}

\begin{remark}
    The number of homology facets of size $<(n-2)$ is not invariant under change of the increment vector $\delta$. Examples are listed in~\cref{tab:homology-dyck-uniform}.
\end{remark}

\subsubsection{Counting top-dimensional spheres}
Our next objective is to enumerate the homology facets of size $(n-1)$. For our counting formulation, we consider a finite northeast path $\nu=NE^{\nu_1} \dots NE^{\nu_n}$ with $\nu_i \geq 2$. %The key to this enumeration is given in Definition~\ref{def_shrunked}.

%\begin{figure}[!h]
%    \centering
%    \includegraphics[width=0.4\textwidth]{figures/Fig_boxes_specialcase.pdf} 
%    \caption{Boxes contained in a facet of size $(n-1)$.}
%    \label{Fig_boxes_specialcase}
%\end{figure}

\begin{definition}\label{def_shrunked}
    Let $\nu = ({\nu_1},\dots ,{\nu_n})$ be a finite northeast path with $\nu_i \geq 2$. We define the \defn{shrunken path}, denoted by $\bar{\nu}$, as the path~$\bar{\nu} = ({\nu_1-2},\dots, {\nu_{n}-2})$.
\end{definition}

Since by Theorem~\ref{thm_hf_invariance} the number of homology facets of size $(n-1)$ is invariant under the choice of $\delta$, we restrict our attention to the case $\delta^{\text{min}}=(0,\dots,0)$. This setting admits an elegant bijection, introduced in Definition~\ref{def_H_bijection}.

\begin{definition}\label{def_H_bijection}
Let $\nu = (\nu_1, \dots, \nu_n)$ be a finite northeast path with $\nu_i \geq 2$ for all~$i$. Within the shape $F_{\bar{\nu}}$ associated with the shrunken path $\bar{\nu}$, we label the~$j$th vertical segment of the $i$th row as $r_{i,j}$, where rows are indexed from bottom to top and vertical segments from left to right, see \Cref{Fig_hf2}.
We assign labels to the boxes of $F_{\delta^{\text{min}},\nu}$ as follows: in each row $i$, the~$j$th box from the right - omitting the first~$1+2(i-1)$ rightmost boxes - is labeled with $r_{i,j}$, see \Cref{Fig_hf1}. 
\begin{figure}[h!]
  \centering
\begin{subfigure}[b]{0.45\textwidth}
    \centering
    \resizebox{0.5\linewidth}{!}{\includegraphics[page=1]{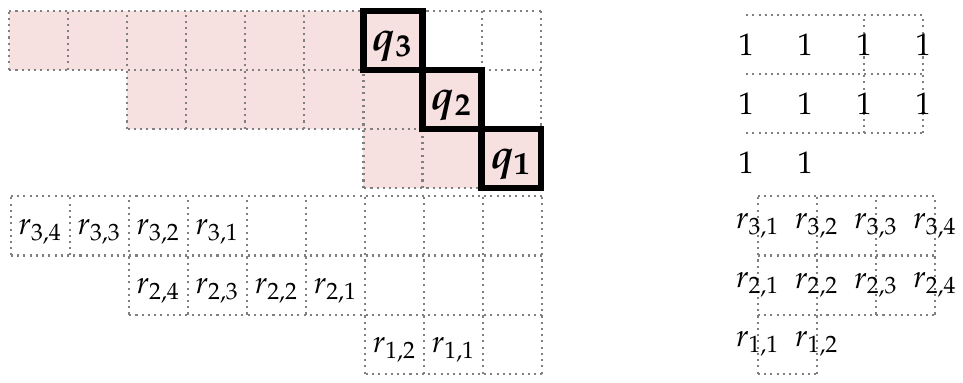}}
    \caption{Labeling of vertical line segments $F_{\bar{\nu}}$}
    \label{Fig_hf2}
  \end{subfigure}
\hfill
    \begin{subfigure}[b]{0.5\textwidth}
    \centering
    \resizebox{1\linewidth}{!}{\includegraphics[page=1]{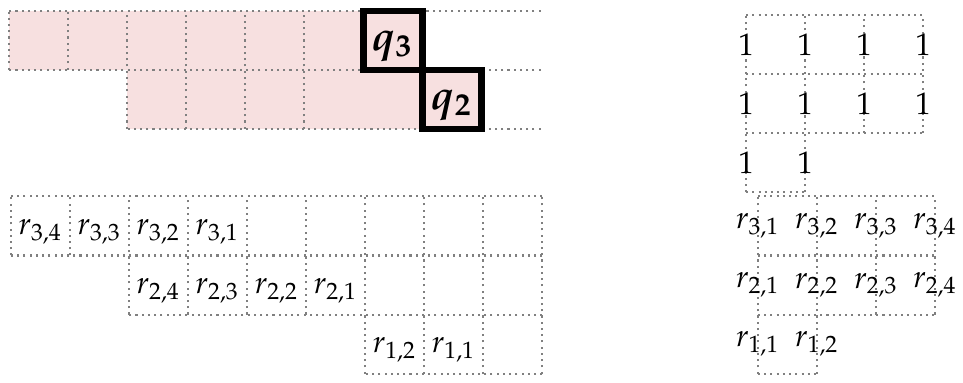}}
    \caption{Box labeling in $F_{\delta^{\text{min}},\nu}$}
    \label{Fig_hf1}
  \end{subfigure}
  
 \caption{Bijection between shrunken Dyck paths and homology facets of size $(n-1)$. %of~$\Delta_{\delta^{\text{min}}, \nu}$
 }
  \label{Fig_shrunken}
\end{figure}
\end{definition}

\begin{definition}
Let $\nu=(\nu_1,\dots,\nu_n)$ be a finite northeast path with $\nu_i \geq 2$, and let~$\bar{\nu}$ be its shrunken path. We define the map
\[
\mathcal{H} : \{\text{$\bar{\nu}$-Dyck paths}\} \longrightarrow \{\text{homology facets of $\Delta_{\delta^{min},\nu}$ with size } (n-1)\},
\]
which maps a~$\bar{\nu}$-Dyck path with vertical line segments $\{ r_{i_1,j_1}, \dots, r_{i_{n-1},j_{n-1}} \}$ to the collection of boxes sharing the same labels.
\end{definition}

\begin{lemma}\label{lemma_facet}
    Let $\nu=(\nu_1,\dots,\nu_n)$ be a finite northeast path, with $\nu_i \geq 2$ and $\mu$ a~$\bar{\nu}$-Dyck path. Then~$\mathcal{H}(\mu)$ is a facet of the canonical join complex for the alt $\nu$-Tamari lattice~$\altTam{\nu}{\delta}$ of size $(n-1)$.
\end{lemma}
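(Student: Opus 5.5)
We work throughout in the case $\delta=\delta^{\min}=(0,\dots,0)$, as fixed before Definition~\ref{def_H_bijection}; by Lemma~\ref{lemma_bijection_labels} this transfers to every $\delta$. Here $\check{\nu}=\nu$, and the shape $F_{\delta^{\min},\nu}$ has $n-1$ rows. Counting from the bottom, row $i$ consists of the boxes weakly above $\nu$ that are northwest of $\hat{\nu}$. Under the labeling of Definition~\ref{def_H_bijection}, row $i$ has its rightmost $1+2(i-1)$ boxes unlabeled. The next $\bar{\nu}$-row-length many boxes carry the labels $r_{i,1},r_{i,2},\dots$ from right to left.

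A $\bar{\nu}$-Dyck path $\mu$ has exactly one vertical (north) step in each of the rows $1,\dots,n-1$ of $F_{\bar{\nu}}$. Let $r_{i,j_i}$ be its step in row $i$; the $j_i$ are weakly monotone in $i$, and the bound $j_i\le$ (admissible length of row $i$) is exactly the condition of staying weakly above $\bar{\nu}$. First I will check that $\mathcal{H}(\mu)$ is well defined: every label $r_{i,j_i}$ must actually occur in $F_{\delta^{\min},\nu}$. This is a row-length count: the length of row $i$ of $F_{\delta^{\min},\nu}$ minus $1+2(i-1)$ should be at least the length of row $i$ of $F_{\bar{\nu}}$, using $\nu_k=\bar{\nu}_k+2$. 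So $\mathcal{H}(\mu)$ picks exactly one box $b_i$ in each row $i$, giving $n-1$ boxes.

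Next comes pairwise compatibility, which I expect to be the main obstacle. Distinct rows rule out the row case of Definition~\ref{def_boxcomples}. For $i<k$, I will show that $b_k$ lies in a different column from $b_i$, and is either strictly northwest of $b_i$ or northeast of it with the bounding rectangle leaving the shape. The offset $1+2(i-1)$ grows by $2$ per row, while the bottom-right boundary $\hat{\nu}$ shifts by $\nu_k$ per row. Since $\nu_k-2=\bar{\nu}_k$, the shift of the shrunken shape absorbs this difference. Hence the monotonicity of the $j_i$ (the Dyck condition) forces $b_k$ to be weakly left of the column directly above $b_i$'s column minus one, so $b_k$ is strictly northwest of $b_i$. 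I would first verify this with explicit coordinates: write the column of $b_i$ as $c_i=(\text{right end of row } i)-2(i-1)-j_i$, and show $c_k<c_i$ using $j_k\ge j_i$ together with the row-end recursion. If a northeast configuration can occur, I will instead show its bounding rectangle crosses $\hat{\nu}$.

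Finally, maximality and the facet property. A face with one box in every row of an $(n-1)$-row shape cannot be enlarged, because any added box shares a row with some $b_i$. Therefore $\mathcal{H}(\mu)$ is a facet of size $n-1$. By \Cref{realization} it is then a facet of the canonical join complex of $\altTam{\nu}{\delta^{\min}}$, and applying $\varphi$ from Lemma~\ref{lemma_bijection_labels} gives the corresponding facet for any $\delta$.
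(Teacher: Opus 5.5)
Your skeleton is the paper's argument run directly instead of by contradiction. The paper supposes that $r_{i_1,j_1}$ and $r_{i_2,j_2}$ with $i_1<i_2$ are incompatible, so the upper box is vertically above or northeast of the lower one. Since row $i_2$ omits $2(i_2-i_1)$ more boxes than row $i_1$, this forces $j_{i_1}>j_{i_2}$, contradicting the Dyck condition. Maximality then comes from having one box per row, exactly as in your last paragraph.

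What is wrong is your picture of the shape, and your key step is built on it. For $\delta^{\min}=(0,\dots,0)$ one has $\check\nu=E^{\nu_1+\dots+\nu_n}N^{n}$, not $\check\nu=\nu$, so $F_{\check\nu}$ is the full bounding rectangle. The path $\hat\nu$ is the left (staircase) boundary, and the $n-1$ rows are right-aligned on the fixed right edge, with row $i$ containing $\nu_1+\dots+\nu_i$ boxes. Compare the box complex $\Delta_{(1,2,3)}$ of the classical Dyck lattice. So there is no right boundary moving by $\nu_k$ per row, and nothing for $\bar\nu_k$ to absorb.

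As you motivate it, the step would in fact fail. If the right ends moved right by $\nu_{i+1}$ from row $i$ to row $i+1$, your formula would give $c_{i+1}-c_i=\bar\nu_{i+1}-(j_{i+1}-j_i)$. This is positive whenever $j_{i+1}=j_i$ and $\bar\nu_{i+1}>0$, so monotonicity of the $j_i$ would not yield $c_k<c_i$. Your fallback is also unavailable. In the actual right-aligned shape, higher rows are longer, so every SW--NE pair spans a rectangle inside the shape and is therefore incompatible.

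With the correct picture the step is one line. The box $r_{i,j}$ is the $(2i-1+j)$-th box from the common right edge, independently of $\nu$. So for $i<k$ and $j_i\le j_k$, the box $b_k$ lies in a higher row and at least $2(k-i)\ge 2$ columns to the left of $b_i$. It is therefore strictly northwest of $b_i$, hence compatible.

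The path $\bar\nu$ enters only through well-definedness, via $(\nu_1+\dots+\nu_i)-(2i-1)=(\bar\nu_1+\dots+\bar\nu_i)+1$. This is exactly the number of vertical segments in row $i$ of $F_{\bar\nu}$, which is one more than its number of boxes. So your count should compare against segments, not against the row length; your ``next $\bar\nu$-row-length many boxes'' is off by one.

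Finally, the transfer to arbitrary $\delta$ via $\varphi$ is harmless but unnecessary, since $\mathcal{H}$ is defined with target $\Delta_{\delta^{\min},\nu}$.
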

\begin{proof}
    Let $\mu$ be a $\bar{\nu}$-Dyck path. Suppose, for contradiction, that $\mathcal{H}(\mu)$ contains two pairwise incompatible boxes labeled~$r_{i_1,j_1}$ and~$r_{i_2,j_2}$ with $i_1<i_2$. 
    We consider the number of boxes in $F_{\delta^{\text{min}}, \nu}$ labeled by an $r_{i,j}$ to the right of $r_{i_1,j_1}$ and~$r_{i_2,j_2}$. By construction, we omitted in row $i_2$ exactly $2(i_2-i_1)>2$ boxes more than in row $i_1$. Since~$r_{i_2,j_2}$ is incompatible with $r_{i_1,j_1}$, the box $r_{i_2,j_2}$ must be vertical or northeast to~$r_{i_1,j_1}$. Hence there are at least $2$ more labeled boxes to the right of $r_{i_1,j_1}$ than there are for $r_{i_2,j_2}$.
    Therefore, the path $\mu$ uses in row $i_1$ a vertical line segment to the right of the used vertical line segment in row $i_2$. Since $i_1<i_2$, this contradicts the definition of a $\bar{\nu}$-Dyck path. Since $\mathcal{H}(\mu)$ contains a box in each row, it is a facet of size $n-1$.
    %Since each~$\nu_i\geq2$, there are at least~$\nu_{i_2}$ more boxes to the left of $r_{i_2,j_2}$ than to the left of $r_{i_1,j_1}$. 
\end{proof}

    \begin{lemma}\label{lemma_H_wd}
    Let $\nu=(\nu_1,\dots,\nu_n)$ be a finite northeast path, with $\nu_i \geq 2$ and $\mu$ a $\bar{\nu}$-Dyck path. Then~$\mathcal{H}(\mu)$ is a homology facet of size $(n-1)$. In particular, $\mathcal{H}$ is well-defined.
\end{lemma}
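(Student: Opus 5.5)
We have to show that $\mathcal{H}(\mu)$ is a homology facet of size $(n-1)$ for the shelling $\mathcal{F}$ of Definition~\ref{def_shelling_sequence}, refined by Convention~\ref{remark_shelling}. By Lemma~\ref{lemma_facet}, $\mathcal{H}(\mu)$ is already a facet with exactly one box in each of the $n-1$ rows of $F_{\delta^{\min},\nu}$, so the work is to show $R(\mathcal{H}(\mu))=\mathcal{H}(\mu)$.

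Fix a box $b\in\mathcal{H}(\mu)$ in row $i$, labelled $r_{i,j}$. We need a facet $F'$ listed before $\mathcal{H}(\mu)$ with $\mathcal{H}(\mu)\setminus\{b\}\subseteq F'$. Following the criterion in the proof of Theorem~\ref{thm_hf_invariance}, the candidate is $F'=(\mathcal{H}(\mu)\setminus\{b\})\cup\{b'\}$, where $b'$ lies in row $i$ to the right of $b$. The natural choice is the box immediately to the right of $b$.

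Two cases arise for $b'$:
\begin{enumerate}
\item If $j>1$, then $b'$ carries the label $r_{i,j-1}$.
\item If $j=1$, then $b'$ is the last omitted box at the right end of row $i$. Here the omission of $1+2(i-1)$ boxes is used: there is always at least one unlabelled box to the right, and it is among the $1+2(i-1)$ boxes reserved to guarantee compatibility.
\end{enumerate}

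Next I must check that $b'$ is compatible with every other box of $\mathcal{H}(\mu)$. For boxes in rows $i'<i$, I will argue as in Lemma~\ref{lemma_facet}. The $\bar{\nu}$-Dyck condition forces the box in the lower row $i'$ to lie at least $2(i-i')-1\geq 1$ labelled positions further to the right, after accounting for the extra omitted boxes. Moving $b$ one step right therefore still leaves it strictly northwest of that box, or outside the bounding-rectangle condition, so compatibility holds. For rows $i'>i$, moving $b$ to the right only increases its separation from boxes that were already northwest or compatible. I also need that the rectangle condition cannot newly be violated; here the shape for $\delta^{\min}$ is a staircase with rows shifted by $\nu_i\geq 2$, and I will use that to control bounding rectangles.

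Having $F'$ as a face, extend it to a facet $F''\supseteq F'$. Since $F''$ has a box in every row, $F''=F'$, so $F'$ is itself an $(n-1)$-facet. Then verify that $F'$ precedes $\mathcal{H}(\mu)$. The two sequences from Definition~\ref{def_sequences} either differ in the box-label class, in which case the right box is closer to the $q$'s and is larger in reverse lex, or they coincide, in which case Convention~\ref{remark_shelling} (right to left) places $F'$ first.

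The main obstacle is this last comparison. It needs a precise link between the box-labeling of Definition~\ref{def_labeling} (rows and columns of $q_1,\dots,q_{n-1}$) and positions within a row for $\delta^{\min}$. I would first show that for $\delta^{\min}$ the boxes $q_i$ are the rightmost admissible boxes in each row, so that moving right within a row never decreases the sequence in reverse lex order. Since every $b$ is in $R$, $\mathcal{H}(\mu)$ is a homology facet. Well-definedness of $\mathcal{H}$ follows.
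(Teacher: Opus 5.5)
Your proposal is correct and follows the same route as the paper's proof. You replace each box of $\mathcal{H}(\mu)$ by its right neighbour in the same row, get compatibility from the horizontal gap of at least two columns between the boxes of consecutive rows, and use the right-to-left convention to place the new facet before $\mathcal{H}(\mu)$.

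The ordering step you flag is simpler than you expect. For $\delta^{\min}$, $q_i$ is the $i$th box from the right in row $i$. Hence both facets have sequence $(1,\dots,n-1)$, except when $b=r_{1,1}$. In that case $b'=q_1$ and the sequence becomes $(q_1,2,\dots,n-1)$, which comes first in the order.
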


\begin{proof}
    By Lemma~\ref{lemma_facet}, it remains to show that $\mathcal{H}(\mu)$ is a homology facet. Let $F_1,\dots,F_m$ be a shelling as in Definition~\ref{def_shelling_sequence} and let us choose boxes with the same label from right to left in each row, see Convention~\ref{remark_shelling}. Without loss of generality let~$F_j=\mathcal{H}(\mu)$.
    %Since each facet appears in the shelling, let $j\in\{1,\dots,m\}$ such that $F_j=\mathcal{H}(\mu)$. 
    We show: $$F_j=\{x \in F_j : F_j \setminus \{x\} \subseteq F_k \text{ for some } k < j\}.$$
We observe that each facet of the form $\mathcal{H}(\mu)$ contains exactly one box per row. Since we omit the~$1+2(i-1)$ rightmost boxes in row~$i$ any two boxes in consecutive rows are separated horizontally by at least one column of boxes. Hence, for each vertex $x \in F_j = \mathcal{H}(\mu)$, the box $b(x)$ to the right of~$x$ is compatible with~$F_j \setminus \{x\}$. 
Consequently,~$F_j \setminus \{x\} \cup \{b(x)\}$ is a facet in the shelling $\mathcal{F}$ that appears before~$F_j$. Therefore,~$\mathcal{H}(\mu)$ is a homology facet of size~$(n-1)$.

\end{proof}

\begin{lemma}\label{lemma_H_facets}
    A facet of the canonical join complex $\Delta_{\delta^{\text{min}},\nu}$ of size $(n-1)$ appears as a homology facet, if its vertices correspond to boxes labeled by $r_{1,j_1}, \dots, r_{{n-1},j_{n-1}}$ such that~$j_{k-1} \leq j_k $ for $k\in \{2,\dots,n-1\}$.
\end{lemma}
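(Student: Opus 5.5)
The plan is to reduce the statement to Lemma~\ref{lemma_H_wd} by showing that every such facet is of the form $\mathcal{H}(\mu)$ for a $\bar{\nu}$-Dyck path $\mu$. Let $F$ be a facet of $\Delta_{\delta^{\text{min}},\nu}$ of size $(n-1)$. Since $F_{\delta^{\text{min}},\nu}$ has $n-1$ rows and boxes in the same row are incompatible, $F$ contains exactly one box in each row $i$. By hypothesis this box carries a label $r_{i,j_i}$ from Definition~\ref{def_H_bijection}. In particular, it is not among the $1+2(i-1)$ rightmost boxes of row $i$, and $j_i$ is at most the number of vertical segments of row $i$ in $F_{\bar{\nu}}$.

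First, I will construct the path $\mu$. In $F_{\bar{\nu}}$, take in row $i$ (rows counted from bottom to top) the vertical segment $r_{i,j_i}$. Join consecutive chosen segments by east steps: from the top of $r_{i,j_i}$ go east to the bottom of $r_{i+1,j_{i+1}}$. This is possible exactly because $j_i \le j_{i+1}$, which is the hypothesis $j_{k-1}\le j_k$. Close the path with the necessary initial and final east steps. The result is a northeast path from the bottom-left to the top-right corner of the bounding box.

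Next, I will check that $\mu$ stays weakly above $\bar{\nu}$. This holds because each segment $r_{i,j_i}$ is, by definition, a vertical segment inside the Ferrers shape $F_{\bar{\nu}}$ in row $i$. Hence the north step of $\mu$ in row $i$ occurs no further right than the corresponding north step of $\bar{\nu}$. So $\mu$ is a $\bar{\nu}$-Dyck path. By the definition of $\mathcal{H}$, the set of boxes of $F_{\delta^{\text{min}},\nu}$ carrying the labels of the vertical segments of $\mu$ is exactly $\{r_{1,j_1},\dots,r_{n-1,j_{n-1}}\}$, so $F=\mathcal{H}(\mu)$. Lemma~\ref{lemma_H_wd} then gives that $F$ is a homology facet of size $(n-1)$ in the shelling of Definition~\ref{def_shelling_sequence} with Convention~\ref{remark_shelling}.

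The main obstacle I expect is the bookkeeping in this matching. The labels in row $i$ of $F_{\delta^{\text{min}},\nu}$ are counted from the right after omitting $1+2(i-1)$ boxes, while the segments $r_{i,j}$ in $F_{\bar{\nu}}$ are counted from the left. So I must verify that the number of labelled boxes in row $i$ equals the number of vertical segments of row $i$ in $F_{\bar{\nu}}$. This comes down to comparing row lengths: row $i$ of $F_{\delta^{\text{min}},\nu}$ has length $\sum_{k\ge i}\nu_k$ up to the fixed offset, while the corresponding row of $F_{\bar{\nu}}$ has $\sum_{k\ge i}(\nu_k-2)+1$ segments. I also need the orientation convention (left-to-right versus right-to-left) to be chosen so that monotonicity $j_{k-1}\le j_k$ corresponds to the path going northeast rather than northwest. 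I would check this first on the example of Figure~\ref{Fig_shrunken}, and only then write the general count. If the count is off by the omitted boxes, the definition of the labelling in Definition~\ref{def_H_bijection} should be read with that offset built in.
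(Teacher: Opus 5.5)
Your argument is fine for the implication as the sentence is literally worded (monotone labels $\Rightarrow$ homology facet). A choice of one labelled segment $r_{i,j_i}$ per row with $j_{k-1}\le j_k$ is a $\bar{\nu}$-Dyck path $\mu$, so $F=\mathcal{H}(\mu)$ and Lemma~\ref{lemma_H_wd} applies. But that is just Lemma~\ref{lemma_H_wd} restated, and it is the \emph{converse} of what the paper proves here.

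The paper starts from an arbitrary homology facet $F$ of size $(n-1)$ and shows two things:
\begin{enumerate}
\item every box of $F$ carries some label $r_{i,j}$, i.e.\ $F$ contains none of the $1+2(i-1)$ omitted rightmost boxes of row $i$;
\item these labels satisfy $j_{k-1}\le j_k$.
\end{enumerate}
This necessity direction is exactly what Proposition~\ref{prop_counting_dyck} uses for surjectivity of $\mathcal{H}$. With only your direction, the count of top-dimensional spheres in Theorem~\ref{theorem_wedge} would be only a lower bound. So your proposal leaves the substantive content of the lemma unproved.

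The missing idea is a local obstruction in the shelling.

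For (1), let $x$ be the lowest box of $F$ among the omitted boxes, say in row $i\ge 2$. The box $y$ of $F$ in row $i-1$ is labelled. The rows of $F_{\delta^{\text{min}},\nu}$ are right-aligned and grow upward, so if $y$ were in the same column as $x$, or southwest of $x$, they would be incompatible. Compatibility therefore forces $y$ to be diagonally adjacent, directly southeast of $x$. Then every box to the right of $x$ in row $i$ is incompatible with $y$. So $F\setminus\{x\}$ lies in no earlier facet of the shelling with Convention~\ref{remark_shelling}, i.e.\ $x\notin R(F)$. If $x$ lies in the bottom row, it is $q_1$; every earlier facet then also contains $q_1$ and would contain $F$, so again $x\notin R(F)$.

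For (2), compatibility of $r_{k-1,j_{k-1}}$ and $r_{k,j_k}$ with $j_{k-1}>j_k$ forces $j_{k-1}=j_k+1$, again a diagonally adjacent pair. The same argument shows $r_{k,j_k}\notin R(F)$.

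As a side remark on your bookkeeping: row $i$ of $F_{\delta^{\text{min}},\nu}$ has $\nu_1+\dots+\nu_i$ boxes. The relevant count is therefore $\sum_{k\le i}(\nu_k-2)+1$ labelled boxes, not a sum over $k\ge i$. This matches the $(i+1)$st row of $F_{\bar\nu}$, since the bottom row of $F_{\bar\nu}$ is forced.
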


\begin{proof}
Let $F$ be a homology facet of size $(n-1)$. We separate the shape $F_{\delta^{\text{min}},\nu}$ according to the boxes labeled by some $r_{i,j}$ by drawing a red line, as illustrated in~\cref{Fig_redline1}. 
\begin{figure}[h!]
  \centering
\begin{subfigure}[b]{0.46\textwidth}
    \centering
    \resizebox{1\linewidth}{!}{\includegraphics[page=1]{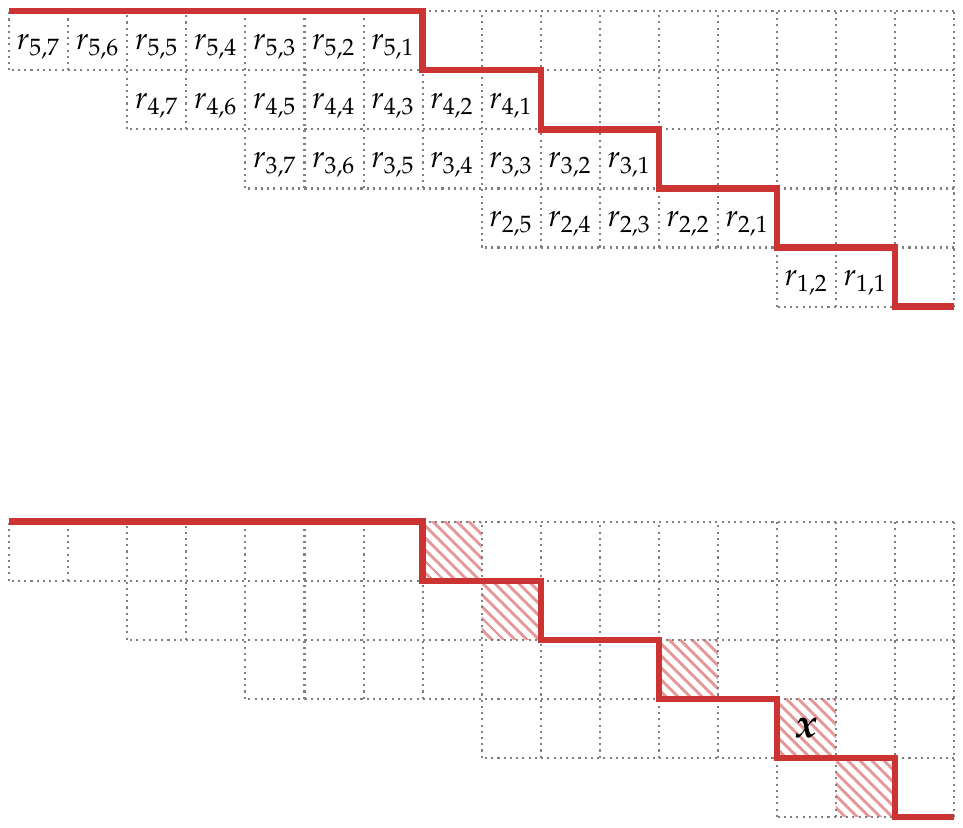}}
    \caption{Boxes below the red line are labeled by some $r_{i,j}$ by Definition~\ref{def_H_bijection}.}
    \label{Fig_redline1}
  \end{subfigure}
\hfill
    \begin{subfigure}[b]{0.46\textwidth}
    \centering
    \resizebox{1\linewidth}{!}{\includegraphics[page=1]{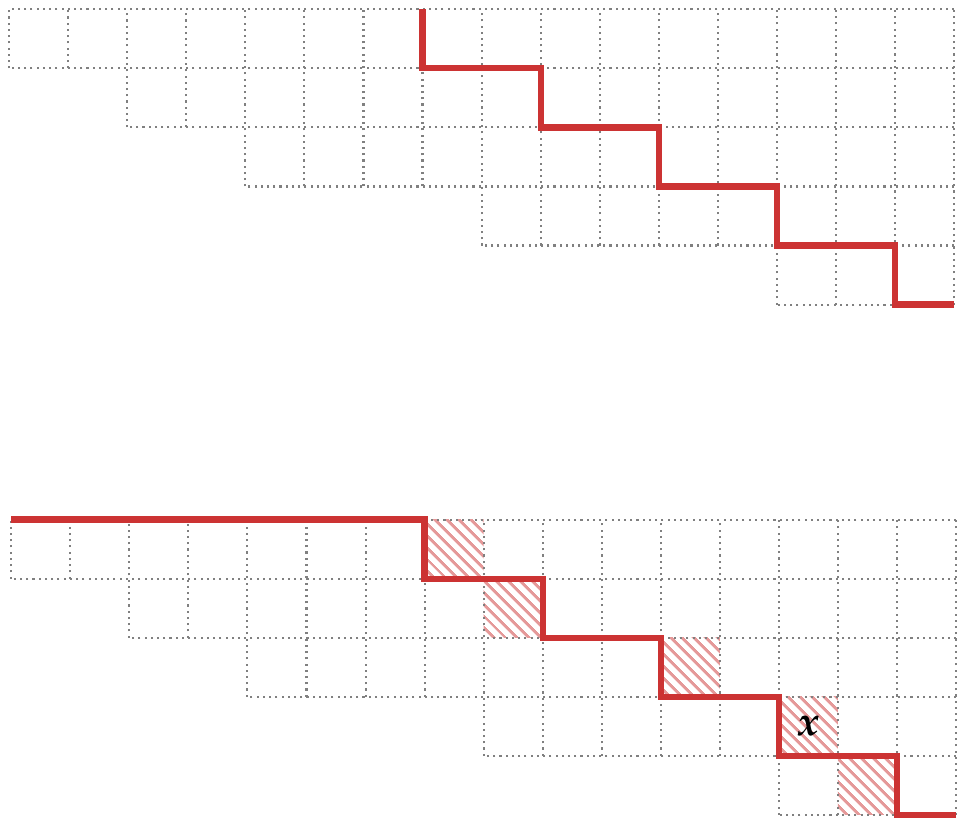}}
    \caption{A facet of size $(n-1)$ containing boxes above red line.}
    \label{Fig_redline2}
  \end{subfigure}
  
 \caption{Red line in $F_{\delta^{\text{min}},\nu}$ and a facet of $\Delta_{\delta^{\text{min}},\nu}$.
 }
  \label{Fig_redline}
\end{figure}
We establish the following:
\begin{enumerate}
    \item Each vertex of $F$ corresponds to a box labeled by $r_{i,j}$ for some $i,j$. \label{c1}
    \item The boxes corresponding to the vertices of $F$ are labeled $r_{1,j_1}, \dots, r_{n-1,j_{n-1}}$ such that $j_{k-1} \leq j_k$ for all $k \in \{2, \dots, n-1\}$.\label{c2}
\end{enumerate}

\eqref{c1}: Suppose for contradiction that $F$ contains a box above the red line. This is illustrated in \cref{Fig_redline2}. Let $x$ be the bottom most box above the red line. Since~$F$ corresponds to a set of $n-1$ boxes, one box per row, and all boxes below $x$ are below the red line, the box in the row below $x$ must be the box directly southeast to~$x$. Otherwise, it is not possible to mark $n-1$ compatible boxes. Therefore, the face~$F\setminus \{x\}$ has not appeared in a facet before. Hence $F$ cannot be a homology facet.
\eqref{c2}: Suppose for contradiction, that $j_{k-1}>j_k$ for some $k \in \{2, \dots, n-1\}$. Since all boxes of the facet $F$ are pairwise compatible, by construction, the two boxes $r_{k-1,j_{k-1}}$ and $r_{k,j_k}$ must be directly northwest of southeast to each other. But this means that $F\setminus \{r_{k,j_k}\}$ has not appeared in a facet before.

\end{proof}

\begin{proposition}\label{prop_counting_dyck}
    For a finite northeast path $\nu=(\nu_1,\dots,\nu_n)$, with $\nu_i\geq 2$, the homology facets of size $(n-1)$ of the canonical join complex $\Delta_{\delta^{\text{min}}, \nu}$ are in bijection with $\bar{\nu}$-Dyck paths.
\end{proposition}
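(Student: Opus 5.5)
The proof will show that the map $\mathcal{H}$ is a bijection from $\bar{\nu}$-Dyck paths onto the homology facets of size $(n-1)$ of $\Delta_{\delta^{\text{min}},\nu}$.

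Lemma~\ref{lemma_H_wd} already shows that $\mathcal{H}$ is well defined: every $\bar{\nu}$-Dyck path $\mu$ is sent to a homology facet of size $(n-1)$. So it remains to prove that $\mathcal{H}$ is injective and surjective, and I will do these in that order.

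\emph{Injectivity.} A $\bar{\nu}$-Dyck path is determined by which vertical segment $r_{i,j_i}$ it uses in each row $i\in\{1,\dots,n-1\}$, i.e.\ by the weakly increasing sequence $(j_1,\dots,j_{n-1})$. By Definition~\ref{def_H_bijection}, distinct labels $r_{i,j}$ are carried by distinct boxes of $F_{\delta^{\text{min}},\nu}$. Hence distinct paths give distinct sets of boxes, and $\mathcal{H}$ is injective.

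I should first make sure the row indexing agrees on both sides. The shape $F_{\bar{\nu}}$ has $n-1$ rows carrying north steps. Each row $i$ of $F_{\delta^{\text{min}},\nu}$ has at least $1+2(i-1)$ boxes to omit plus one box for each segment $r_{i,j}$. This is where $\nu_i\ge 2$ enters: with $\delta=0$, row $i$ of $F_{\delta^{\text{min}},\nu}$ exceeds the corresponding row length of $F_{\bar{\nu}}$ by about $2i$. I will verify this count briefly from the definitions of $\check{\nu}$ and $\hat{\nu}$.

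\emph{Surjectivity.} Let $F$ be a homology facet of size $(n-1)$. By Lemma~\ref{lemma_H_facets}, in the direction established in its proof, every vertex of $F$ is a box labelled $r_{i,j}$. Moreover, the labels are $r_{1,j_1},\dots,r_{n-1,j_{n-1}}$, one per row, with $j_{k-1}\le j_k$.

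A weakly increasing choice of vertical segments, one in each row of $F_{\bar{\nu}}$, defines a lattice path. This path stays weakly above $\bar{\nu}$ exactly because each $r_{i,j}$ exists, i.e.\ lies inside the shape $F_{\bar{\nu}}$. So the choice $(j_1,\dots,j_{n-1})$ determines a unique $\bar{\nu}$-Dyck path $\mu$, and by construction $\mathcal{H}(\mu)=F$.

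The main obstacle is that Lemma~\ref{lemma_H_facets} is phrased as a sufficient condition ("if"), while its proof actually establishes the necessary direction. I will restate it explicitly as: every homology facet of size $(n-1)$ has the described labels. The sufficient direction is supplied by Lemma~\ref{lemma_H_wd}. I will also check the boundary case of the first row and the final east steps $\bar{\nu}_n$. These correspond to the omitted box in row $1$ and to the unused last column, so they do not affect the count.

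Combining injectivity and surjectivity gives the bijection. Theorem~\ref{thm_hf_invariance} then transfers the count to every $\delta$.
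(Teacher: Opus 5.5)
Your proposal is correct and follows the paper's own argument. Well-definedness comes from Lemma~\ref{lemma_H_wd}, injectivity from the labels $r_{i,j}$ sitting on distinct boxes, and surjectivity from Lemma~\ref{lemma_H_facets}, whose weakly increasing condition $j_{k-1}\le j_k$ is exactly a $\bar{\nu}$-Dyck path described by its vertical segments; you are also right that Lemma~\ref{lemma_H_facets} is stated as an ``if'' but is proved (and needed here) in the ``only if'' direction, with the converse supplied by Lemma~\ref{lemma_H_wd}.
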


\begin{proof}
By Lemma \ref{lemma_H_wd}, $\mathcal{H}$ is well-defined and clearly injective. Since Lemma~\ref{lemma_H_facets} gives exactly the condition for being a $\bar{\nu}$-Dyck path, the map $\mathcal{H}$ is a bijection.
\end{proof}

\begin{proof}[Proof of \Cref{theorem_wedge}]
This is a direct consequence of Theorem~\ref{thm_hf_invariance} and Propositions~\ref{prop_counting_dyck}.
\end{proof}

\begin{example}
     We continue Example~\ref{Fig_ex_lattice2} and consider $\nu=NE^2NE^3NE^2$. The shrunken Dyck path is $\bar{\nu}=(0,1,0)$. The number of $\bar{\nu}$-Dyck paths is $2$. By \Cref{theorem_wedge}, the canonical join complex of all alt $\nu$-Tamari lattices is homotopy equivalent to a wedge of spheres with top dimension~$(n-2)=1$ and the number of~$1$-spheres is $2$. We can see them in~\cref{ex_complex}.
\end{example}

\begin{corollary}\label{corollary_wedge}
    For $\nu=(NE^m)^{n}$, the canonical join complexes of all alt $\nu$-Tamari lattices $\altTam{\nu}{\delta}$ is homotopy equivalent to a wedge of finitely many spheres. Moreover, the number of~$n$~-~spheres is given by the Fuss-Catalan number$$ \frac{1}{(m-2)n+1} \binom{(m-1)n}{n}.$$
    
\end{corollary}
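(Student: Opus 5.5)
The plan is to specialize \Cref{theorem_wedge} to $\nu=(NE^m)^n$, i.e.\ $\nu_i=m$ for all $i$. We assume $m\geq 2$, which is needed for the shrunken path to be defined.

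The first part of the statement, that every canonical join complex of an alt $\nu$-Tamari lattice is homotopy equivalent to a finite wedge of spheres, needs no new argument. By Corollary~\ref{cor_shellable} the complex is shellable. By \cite[Theorems 3.4 and 4.1]{Anders1996} it is then homotopy equivalent to a wedge of spheres, with one $r$-sphere for each $r$-dimensional homology facet of a shelling. Since there are finitely many facets, the wedge is finite.

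For the count of top-dimensional spheres, apply \Cref{theorem_wedge}. Its hypothesis $\nu_i\geq 2$ holds, so the number of top-dimensional spheres (one dimension below the number $n$ of north steps, per the indexing of \Cref{theorem_wedge}) equals the number of $\bar\nu$-Dyck paths. Here the shrunken path is $\bar\nu=(m-2,\dots,m-2)$, that is, $\bar\nu=(NE^{m-2})^n$. The answer is independent of $\delta$, by Theorem~\ref{thm_hf_invariance}. So everything reduces to counting lattice paths weakly above $(NE^{k})^n$ with $k=m-2$.

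These are exactly the $k$-Dyck paths, which are counted by the Fuss--Catalan number $\frac{1}{kn+1}\binom{(k+1)n}{n}$. To justify this I would use the standard cycle lemma, or quote the count of elements of the $k$-Tamari lattice from \cite{Bergeron2012}. Substituting $k=m-2$ gives $\frac{1}{(m-2)n+1}\binom{(m-1)n}{n}$, which is the claimed formula. As a sanity check, $m=3$ gives $C_n$, in agreement with the introduction.

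The main obstacle is bookkeeping rather than mathematics. I need to match the paper's conventions for $\nu$, namely whether $\nu$ has $n$ or $n+1$ entries and whether there is a leading $\nu_0$, so that the sphere dimension and the Fuss--Catalan parameters line up. The statement says "$n$-spheres", whereas \Cref{theorem_wedge} speaks of $(n-2)$-spheres. I would therefore fix the indexing carefully and, if necessary, state the dimension as the top dimension $(n-2)$ from \Cref{theorem_wedge}. One more point needs checking: the $\delta$ used in the bijection $\mathcal H$ is $\delta^{\min}$, while \Cref{theorem_wedge} claims the count for all $\delta$. This is covered by Theorem~\ref{thm_hf_invariance}.
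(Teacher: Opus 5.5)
Your proposal is correct and follows the paper's route, which leaves this corollary without a written proof as a direct consequence of earlier results. The wedge decomposition comes from shellability (Corollary~\ref{cor_shellable} together with Bj\"orner--Wachs), and the count comes from specializing Theorem~\ref{theorem_wedge} to $\bar\nu=(NE^{m-2})^n$, whose Dyck paths are counted by the Fuss--Catalan number $\frac{1}{(m-2)n+1}\binom{(m-1)n}{n}$. You are right that the spheres counted are the top-dimensional $(n-2)$-spheres, coming from facets of size $n-1$, so the ``$n$-spheres'' in the statement is an indexing slip; your parenthetical ``one dimension below $n$'' is a similar slip, since the dimension is two below $n$.
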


\begin{example}
By \cref{theorem_wedge}, all alt $\nu$-Tamari lattices have the same number of top-dimensional spheres, with the $\nu$-Tamari and $\nu$-Dyck lattices representing the two extreme cases. For $\nu=(NE^m)^n$, several examples are listed in \cref{tab:homology-dyck-uniform}, where the number of top-dimensional spheres corresponds to the Betti number~$\beta_{n-2}$. By~Corollary~\ref{corollary_wedge}, this value is given by a shifted Fuss-Catalan number.
\end{example}

\subsection{Homology}\label{section::homology}
Finally, we examine the topological structure of
the canonical join complexes of alt $\nu$-Tamari lattices.

\begin{remark}
    Our work so far has been purely combinatorial. However, \cref{realization} shows that the box complex is a geometric realization of the canonical join complex. Because simplicial complexes act as models for topological spaces, we can use our box complex as a tool to directly study these topological spaces. 
\end{remark}

Recall that the \defn{geometric realization} $|\Delta|$ of an abstract simplicial complex $\Delta$ is formed by gluing its geometric simplices along shared faces. Two such complexes,~$\Delta$ and $\Delta'$, are considered \defn{homotopy equivalent} (or of the same homotopy type) if their geometric realizations are homotopy equivalent, meaning there exist continuous maps $F: |\Delta| \to |\Delta'|$ and $G: |\Delta'| \to |\Delta|$ whose compositions are homotopic to their respective identity maps.

\begin{example}\label{example_homotopy}
The canonical join complex $\Delta_{\Tam{4}}$ of the Tamari lattice $\Tam{4}$ is given by the box complex $\Delta_{\Tam{4}}=\Delta_{(3,2,1)}$, illustrated in~\cref{Fig_cjc_tam}. The complex comprises a~$2$-simplex with three incident edges, as this structure is contractible, it is homotopy equivalent to a point. 

On the other hand, the canonical join complex $\Delta_{\Dyck{4}}$ of the Dyck lattice $\Dyck{4}$ is given by $\Delta_{\Dyck{4}}=\Delta_{(1,2,3)}$, illustrated in \cref{Fig_cjc_dyck}. It consists of two connected components: an isolated vertex and a cycle containing a filled triangle.
Consequently, $\Delta_{\Dyck{4}}$ is homotopy equivalent to the wedge of a $0$- and $1$-dimensional sphere $S^0 \vee S^1$. 
Given these distinct homotopy types, the two complexes are not homotopy equivalent.
\begin{figure}[!h]
  \centering
  \begin{subfigure}[b]{0.49\textwidth}
    \centering
    \resizebox{1\linewidth}{!}{\includegraphics[page=1]{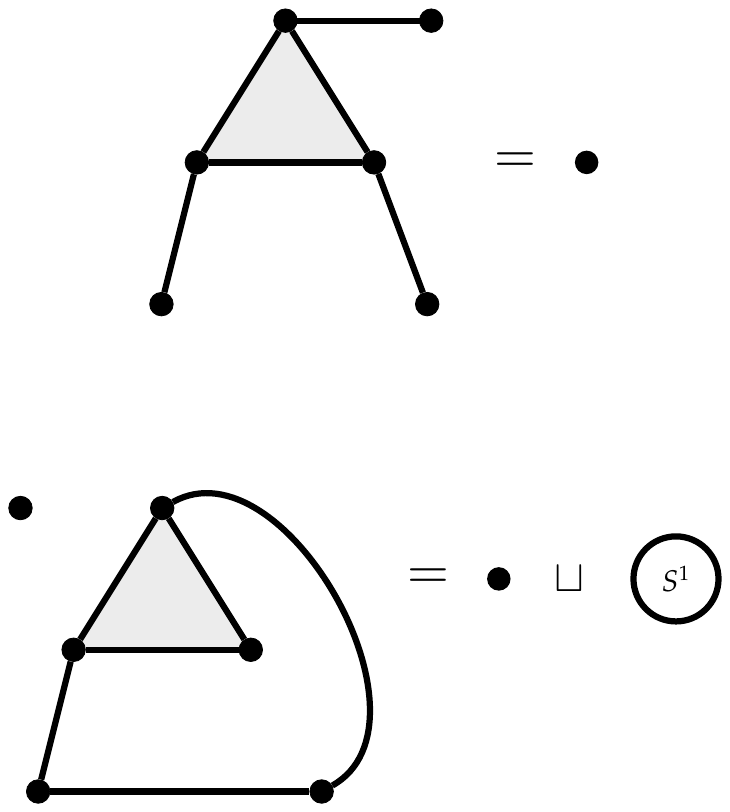}}
    \caption{Canonical join complex of $\Tam{4}$ is homotopically equivalent to a point}
    \label{Fig_cjc_tam}
  \end{subfigure}\hfill
  \begin{subfigure}[b]{0.49\textwidth}
    \centering
    \resizebox{1\linewidth}{!}{\includegraphics[page=1]{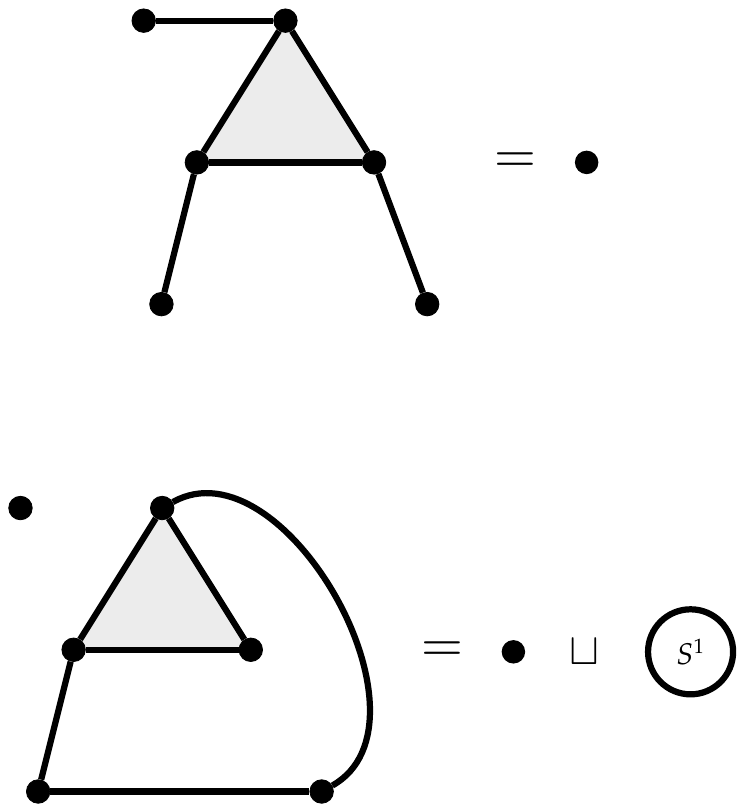}}
    \caption{Canonical join complex of $\Dyck{4}$ is homotopically equivalent to $S^0\vee S^1$.}
    \label{Fig_cjc_dyck}
  \end{subfigure}\hfill
  \caption{The canonical join complex of the Tamari $\Tam{4}$ and Dyck lattice $\Dyck{4}$ are not homotopy equivalent.}
  \label{Fig_complexes}
\end{figure}
\end{example}

\begin{remark}
 Example~\ref{example_homotopy} demonstrates that, for a fixed $\nu$, the canonical join complexes of all alt~$\nu$-Tamari lattices are not homotopy equivalent in general.
\end{remark}

As a direct consequence of shellability, the canonical join complex of a general alt $\nu$-Tamari lattice is homotopy equivalent to a wedge of spheres. Consequently, its homotopy type is completely determined by its Betti numbers. Specifically, for~$k > 0$, the $k$th Betti number $\beta_k$ denotes the number of $k$-dimensional spheres in the wedge sum, while $\beta_0$ indicates the number of connected components.

\begin{example}
    In the specific cases of $m$-Tamari and $m$-Dyck lattices, the highest-dimensional spheres are enumerated by Fuss-Catalan numbers, see Corollary~\ref{corollary_wedge}. \cref{tab:homology-dyck-uniform} summarizes the Betti numbers for the canonical join complexes associated with several~$m$-Tamari and $m$-Dyck lattices. The Betti number $\beta_{n-2}$ corresponds to the number of top-dimensional spheres and depends only on the choice of $m$ and~$n$ in \cref{tab:homology-dyck-uniform}.
\end{example}

\begin{table}[htbp]
\centering
\renewcommand{\arraystretch}{1.15}
\setlength{\tabcolsep}{9pt}
\begin{tabular}{@{}ccp{5.2cm}p{6.8cm}@{}}
\toprule
\(m\) & \(n\) & $m$-Tamari lattice $\altTam{n}{m}$ & $m$-Dyck lattice $\altDyck{n}{m}$ \\
\midrule
2 & 2 & \(\boldsymbol{\beta}=(2)\) & \(\boldsymbol{\beta}=(2)\) \\
2 & 3 & \(\boldsymbol{\beta}=(2,1)\) & \(\boldsymbol{\beta}=(2,1)\) \\
2 & 4 & \(\boldsymbol{\beta}=(1,4,1)\) & \(\boldsymbol{\beta}=(2,5,1)\) \\
2 & 5 & \(\boldsymbol{\beta}=(1,2,10,1)\) & \(\boldsymbol{\beta}=(2,8,15,1)\) \\
2 & 6 & \(\boldsymbol{\beta}=(1,0,15,20,1)\) & \(\boldsymbol{\beta}=(2,11,40,35,1)\) \\
\midrule
3 & 2 & \(\boldsymbol{\beta}=(3)\) & \(\boldsymbol{\beta}=(3)\) \\
3 & 3 & \(\boldsymbol{\beta}=(2,5)\) & \(\boldsymbol{\beta}=(2,5)\) \\
3 & 4 & \(\boldsymbol{\beta}=(1,8,14)\) & \(\boldsymbol{\beta}=(2,9,14)\) \\
3 & 5 & \(\boldsymbol{\beta}=(1,2,45,42)\) & \(\boldsymbol{\beta}=(2,13,55,42)\) \\
\midrule
4 & 2 & \(\boldsymbol{\beta}=(4)\) & \(\boldsymbol{\beta}=(4)\) \\
4 & 3 & \(\boldsymbol{\beta}=(2,12)\) & \(\boldsymbol{\beta}=(2,12)\) \\
4 & 4 & \(\boldsymbol{\beta}=(1,12,55)\) & \(\boldsymbol{\beta}=(2,13,55)\) \\
\midrule
5 & 2 & \(\boldsymbol{\beta}=(5)\) & \(\boldsymbol{\beta}=(5)\) \\
5 & 3 & \(\boldsymbol{\beta}=(2,22)\) & \(\boldsymbol{\beta}=(2,22)\) \\
5 & 4 & \(\boldsymbol{\beta}=(1,16,140)\) & \(\boldsymbol{\beta}=(2,17,140)\) \\
\midrule
6 & 2 & \(\boldsymbol{\beta}=(6)\) & \(\boldsymbol{\beta}=(6)\) \\
6 & 3 & \(\boldsymbol{\beta}=(2,35)\) & \(\boldsymbol{\beta}=(2,35)\) \\
6 & 4 & \(\boldsymbol{\beta}=(1,20,285)\) & \(\boldsymbol{\beta}=(2,21,285)\) \\
\midrule
7 & 2 & \(\boldsymbol{\beta}=(7)\) & \(\boldsymbol{\beta}=(7)\) \\
7 & 3 & \(\boldsymbol{\beta}=(2,51)\) & \(\boldsymbol{\beta}=(2,51)\) \\
\bottomrule
\end{tabular}
\caption{Betti numbers $\boldsymbol{\beta}=(\beta_0,\beta_1,\dots,\beta_{n-2}) $ of the canonical join complex of $m$-Tamari and $m$-Dyck lattices. %$\altTam{\nu}{\delta}$ where $\nu=(NE^m)^n$, considering two extreme cases.
}
\label{tab:homology-dyck-uniform}
\end{table}

\subsection{Cross Tamari Lattices}

The definition of the alt $\nu$-Tamari lattice extends naturally to cross shapes, where row permutations of the shape are also permitted. These lattices are formally introduced in \cite[Section 2.4]{vonbell2025framing} as a special case of framing lattices. Moreover, Ceballos and von Bell present a definition using the same structure as for $(\delta,\nu)$-trees in \cref{section::altvtamari}.

We remark that in the literature, cross Tamari lattices were first studied under the name of chute move lattices \cite{jonsson2005, rubey2012maximal,axelrodfreed2025chute,billey2025lattice}. These lattices depend on a parameter $r$, with the cross Tamari lattices corresponding precisely to the case $r=1$.

\begin{example}
Consider the cross Tamari lattice for a cross shape formed by three vertical and three horizontal boxes sharing a central box. \cref{Fig_cross_tamari} illustrates the cross Tamari lattice, while \cref{Fig_cross_lattice} depicts its perspective edge-labeling. Its canonical join complex comprises five vertices and four edges, \cref{Fig_cross_cjc}. We observe that this canonical join complex is isomorphic to the corresponding box complex, illustrated in \cref{Fig_cross_boxcomplex}. This serves as the primary motivation for Conjecture~\ref{conjectutre_cross}.

\begin{figure}[!h]
  \centering
  \begin{subfigure}[b]{0.3\textwidth}
    \centering
    \resizebox{1\linewidth}{!}{\includegraphics[page=1]{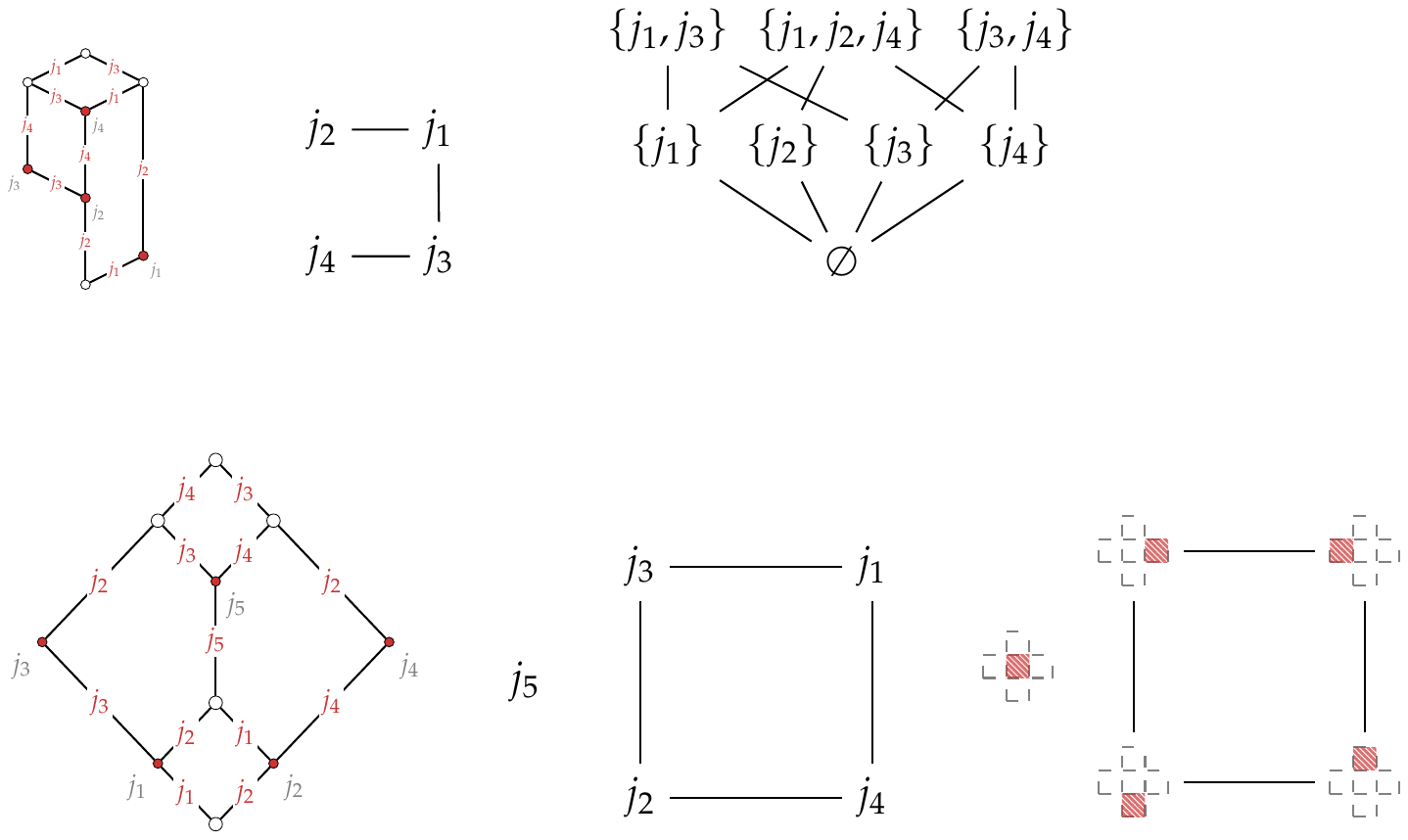}}
    \caption{The perspective edge-labeling for \cref{Fig_cross_tamari}.}
    \label{Fig_cross_lattice}
  \end{subfigure}\hfill
  \begin{subfigure}[b]{0.3\textwidth}
    \centering
    \resizebox{1\linewidth}{!}{\includegraphics[page=1]{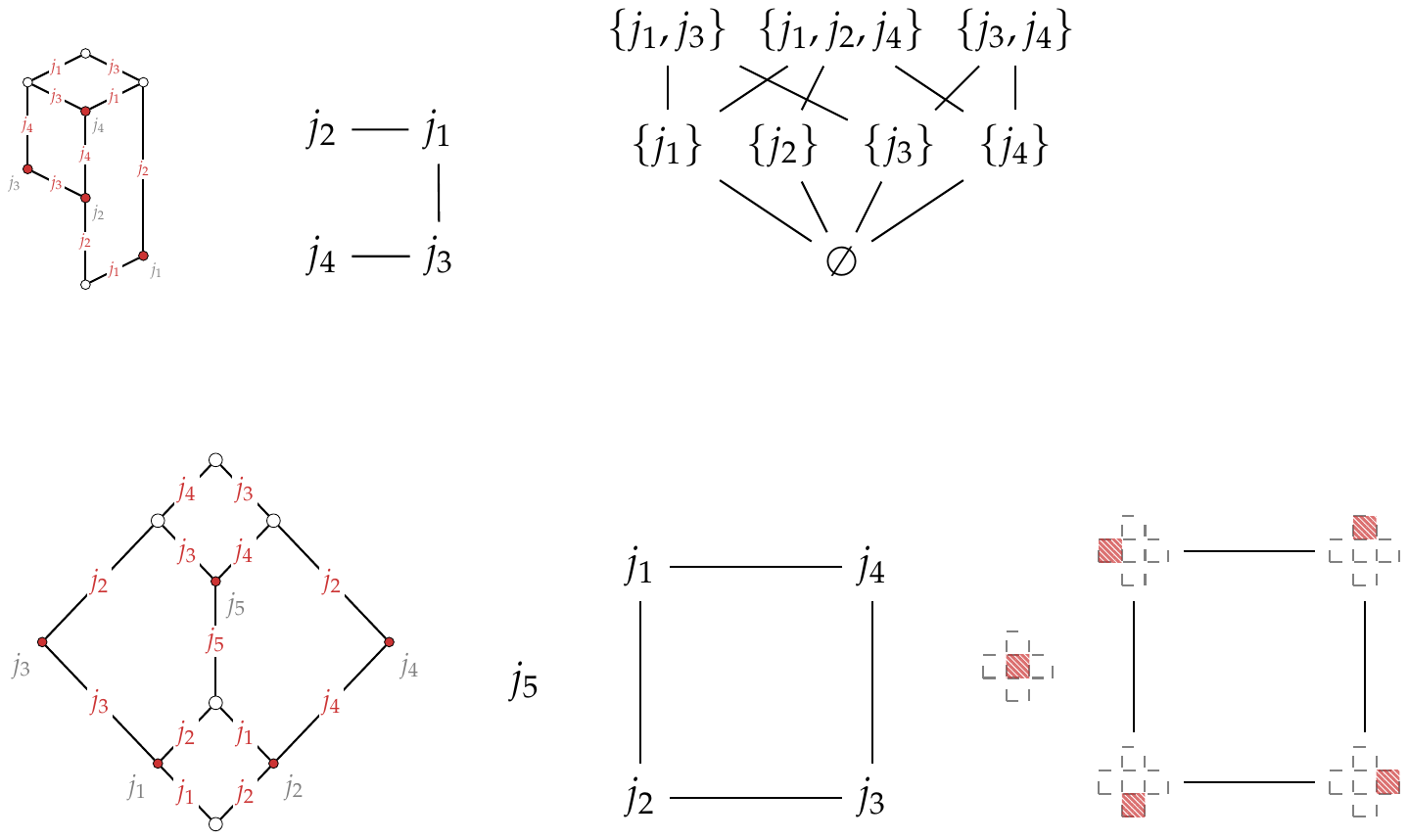}}
    \caption{The canonical join complex for \cref{Fig_cross_tamari}.}
    \label{Fig_cross_cjc}
  \end{subfigure}\hfill
\begin{subfigure}[b]{0.3\textwidth}
    \centering
    \resizebox{1\linewidth}{!}{\includegraphics[page=1]{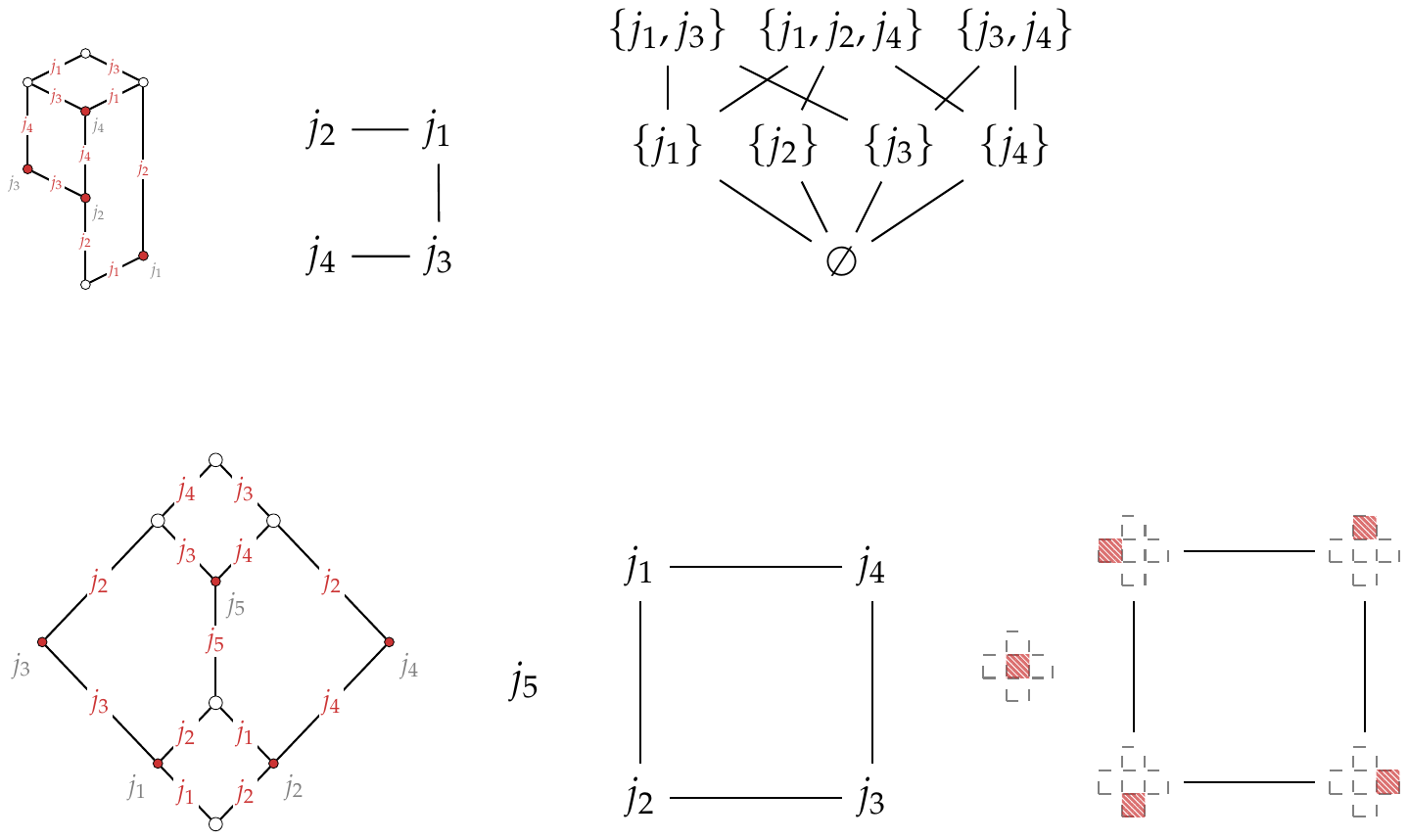}}
    \caption{The box complex for \Cref{Fig_cross_tamari}.}
    \label{Fig_cross_boxcomplex}
  \end{subfigure}
  \caption{The perspective edge-labeling, canonical join complex and corresponding box complex for the lattice in \cref{Fig_cross_tamari}.}
\end{figure}
    \begin{figure}[!h]
    \centering
\includegraphics[width=0.7\textwidth]{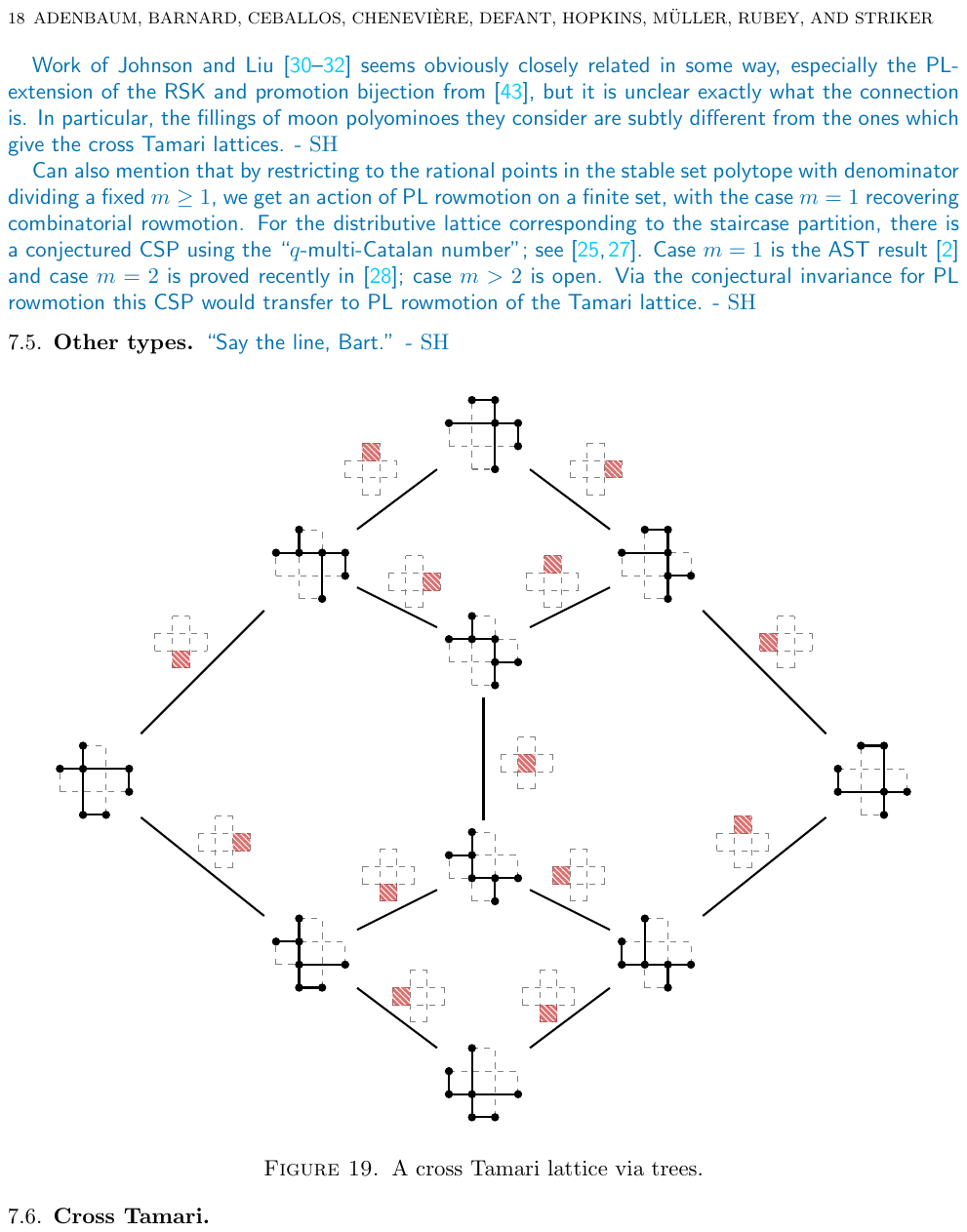}
    \caption{Cross Tamari lattice with a perspective edge-labeling via boxes.}
    \label{Fig_cross_tamari}
    \end{figure}
\end{example}

\begin{conjecture}\label{conjectutre_cross}
    The box complex realizes the canonical join complex of the cross Tamari lattice.
\end{conjecture}

The crucial step in proving Conjecture~\ref{conjectutre_cross} is to generalize the edge-labeling established for alt $\nu$-Tamari lattices, see Definition \ref{definition_ji_tree}. Consider, for instance, \Cref{Fig_cross_tamari}, where the edges incident to the rightmost tree are not labeled by the bottom-right box. Consequently, Definition \ref{definition_ji_tree} does not yield the perspective edge-labeling for cross Tamari lattices. To address this limitation, we propose a refined definition that either incorporates supplementary rules or utilizes a broader framework based on framing lattices.

\begin{remark}
    Assuming Conjecture~\ref{conjectutre_cross} holds, the proof of vertex decomposability directly extends to cross Tamari lattices. %Moreover, Proposition \ref{theorem_reciprocity} and \cref{theorem_wedge} generalize to invariance under column permutations.
\end{remark}
        
%\begin{remark}
%    Lemma~\ref{Lemma_f_vector} establishes that for alt $\nu$-Tamari lattices, the $f$-vector of the canonical join complex is invariant under column permutations. This invariance also applies to row permutations. As a result, the $f$-vector of the canonical join complex for any cross Tamari lattice is invariant under row permutations. In particular, these complexes all share the same Euler characteristic.
%\end{remark}
    
\section{Summary}
In \Cref{section::boxcomplex}, we introduced the box complex $\Delta_u$ and established it as a combinatorial model for the canonical join complex of the alt $\nu$-Tamari lattice (\Cref{realization}). We used the box complex to show vertex decomposability (\cref{vertex_decomposable}) and shellability (Corollary~\ref{cor_shellable}). 

As another application of this new combinatorial model, we demonstrated that, for a fixed path $\nu$, the Euler characteristic of the canonical join complex of any alt~$\nu$-Tamari lattice is determined by the $\nu$-Narayana polynomial and is independent of~$\delta$ (\Cref{theorem_reciprocity}). 
Finally, we studied the homotopy type of the canonical join complex and showed that the number of top-dimensional spheres is invariant across all alt $\nu$-Tamari lattices (\Cref{thm_hf_invariance}). In the special case where $\nu_i \geq 2$ for all $i$, this number is equal to the number of shrunken Dyck paths (\Cref{theorem_wedge}).
Finally, although alt $\nu$-Tamari lattices are closely related, we established that their canonical join complexes are not homotopy equivalent.

%We introduced in \Cref{section::boxcomplex}, the box complex $\Delta_u$ and have established, it is a combinatorial model for the canonical join complex of the alt $\nu$-Tamari lattice, \Cref{realization}. This complex $\Delta_u$ is vertex decomposable. As an application of our new model, we demonstrated, that the Euler characteristic of the canonical join complex of the alt $\nu$-Tamari $Tam_{\delta,\nu}$ lattice is constant and given by the Narayana polynomial evaluated at $-1$. This is a new combinatorial reciprocity. As a consequence that the box complex is vertex decomposable, we obtain that the canonical join complex of the alt $\nu$-Tamari lattice is shellable. Therefore it is homotopically equivalent to a wedge of spheres. Considering this spheres, it turns out, that the number of highest dimensional spheres can be computed easily. For the $m$-Tamari $Tam_m^{n+1}$ and $m$-Dyck lattice $Dyck_m^{n+1}$ we have seen, that the higest dimensional spheres is of dimension $n$ and counted by Fuss catalan numbers. Even the Dyck and Tamari lattices seem to be really related, it turns out, that their canonical join complexes are not homotopically equivalent.

\section*{Acknowledgement}
This paper was partially supported by the Austrian Science Fund FWF, grant P~$33278$.

%In particular,  and Vincent Pilaud for the brief exchange at the 2025 BIRS workshop on lattice theory.

%This paper grew out of the 2025 BIRS workshop on lattice theory. We are grateful to Osamu Iyama and Nathan Williams for co-organizing the workshop, and we thank the other participants for stimulating conversations.

%related to this work, and in particular David Speyer for introducing me to this line of ideas.

%This work was funded by the Austrian Science Fund (FWF), project numbers  10.55776/P33278 and 10.55776/I5788.

%BIBLIOGRAPHY
% You do not have to use the same format for your references, but 
%    include everything in this file.
% If you use BibTeX to create a bibliography, copy the .bbl file into here.
% We recommend you use \doi{...} and \arxiv{...} like the examples below,
% as they give a short display form with an active link to the full url.

%\begin{thebibliography}{99}
%\bibitem{Bollobas} B. Bollob{\'a}s.  Almost every  graph has reconstruction number three.  \emph{J. Graph Theory},  14(1): 1--4, 1990.
%\end{thebibliography}

%arxiv:
\bibliographystyle{alpha}
\bibliography{sn-bibliography}  

@incollection{wachs2007poset,
  author    = {Wachs, Michelle L.},
  title     = {Poset topology: tools and applications},
  booktitle = {Geometric Combinatorics},
  series    = {IAS/Park City Mathematics Series},
  volume    = {13},
  publisher = {American Mathematical Society},
  address   = {Providence, RI},
  year      = {2007},
  pages     = {497--615}
}

@misc{axelrodfreed2025chute,
    AUTHOR={Ilani Axelrod-Freed and Colin Defant and Hanna Mularczyk and Son Nguyen and Katherine Tung},
    TITLE={Chute Move Posets are Lattices},
    YEAR={2025},
    HOWPUBLISHED={\arxiv{2507.13214}}
}

@misc{billey2025lattice,
    AUTHOR={Sara C. Billey and Connor McCausland and Clare Minnerath},
    TITLE={A Proof of {R}ubey's Lattice Conjecture},
    YEAR={2025},
    HOWPUBLISHED={\arxiv{2507.18852}}
}

@article {rubey2012maximal,
    AUTHOR = {Rubey, Martin},
     TITLE = {Maximal {$0$}-{$1$}-fillings of moon polyominoes with
              restricted chain lengths and rc-graphs},
   JOURNAL = {Adv. in Appl. Math.},
  FJOURNAL = {Advances in Applied Mathematics},
    VOLUME = {48},
      YEAR = {2012},
    NUMBER = {2},
     PAGES = {290--305},
      ISSN = {0196-8858,1090-2074},
   MRCLASS = {05B50 (05A19 05E05 06A07)},
  MRNUMBER = {2873877},
MRREVIEWER = {Svetlana\ Poznanovi\'c},
       DOI = {10.1016/j.aam.2011.05.005},
       URL = {https://doi.org/10.1016/j.aam.2011.05.005},
}

@article{jonsson2005,
    AUTHOR = {Jonsson, Jakob},
     TITLE = {Generalized triangulations and diagonal-free subsets of stack
              polyominoes},
   JOURNAL = {J. Combin. Theory Ser. A},
  FJOURNAL = {Journal of Combinatorial Theory. Series A},
    VOLUME = {112},
      YEAR = {2005},
    NUMBER = {1},
     PAGES = {117--142},
      ISSN = {0097-3165},
     CODEN = {JCBTA7},
   MRCLASS = {05A15 (05B50)},
  MRNUMBER = {2167478 (2006d:05011)},
MRREVIEWER = {Eric S. Egge},
       DOI = {10.1016/j.jcta.2005.01.009},
       FURL = {http://dx.doi.org/10.1016/j.jcta.2005.01.009},
}

@misc{vonbell2025framing,
    AUTHOR={Matias von Bell and Cesar Ceballos},
    TITLE={Framing Lattices and Flow Polytopes},
    YEAR={2025},
    HOWPUBLISHED={\arxiv{2512.20575}}
}

@article {Wachs1999,
    AUTHOR = {Wachs, M. L.},
     TITLE = {Obstructions to shellability},
   JOURNAL = {Discrete Comput. Geom.},
  FJOURNAL = {Discrete \& Computational Geometry. An International Journal
              of Mathematics and Computer Science},
    VOLUME = {22},
      YEAR = {1999},
    NUMBER = {1},
     PAGES = {95--103},
      ISSN = {0179-5376,1432-0444},
   MRCLASS = {52B22 (05E25 06A07)},
  MRNUMBER = {1692690},
MRREVIEWER = {Eva-Maria\ E.\ Feichtner},
       DOI = {10.1007/PL00009450},
       URL = {https://doi.org/10.1007/PL00009450},
}

@book{Jonsson2008,
  author    = {Jonsson, Jakob},
  title     = {Simplicial Complexes of Graphs},
  series    = {Lecture Notes in Mathematics},
  publisher = {Springer Verlag, Berlin},
  year      = {2008},
  doi       = {10.1007/978-3-540-75859-4},
  isbn      = {978-3-540-75859-4}
}

@book {Tamari51,
    AUTHOR = {Tamari, Dov},
     TITLE = {Mono\"ides pr\'eordonn\'es et cha\^ines de {M}alcev},
      NOTE = {Th\`ese},
 PUBLISHER = {Universit\'e{} de Paris, Paris},
      YEAR = {1951},
     PAGES = {iv+81 pp. (mimeographed)},
   MRCLASS = {20.0X},
  MRNUMBER = {51833},
MRREVIEWER = {D.\ C.\ Murdoch},
}

@article{Reading2004,
  author  = {Reading, Nathan},
  title   = {Lattice congruences of the weak order},
  journal = {Order},
  year    = {2004},
  volume  = {21},
  number  = {4},
  pages   = {315--344},
  doi     = {10.1007/s11083-005-4803-8}
}

@article {Anders1996,
    AUTHOR = {Bj\"orner, Anders and Wachs, Michelle L.},
     TITLE = {Shellable nonpure complexes and posets. {I}},
   JOURNAL = {Trans. Amer. Math. Soc.},
  FJOURNAL = {Transactions of the American Mathematical Society},
    VOLUME = {348},
      YEAR = {1996},
    NUMBER = {4},
     PAGES = {1299--1327},
      ISSN = {0002-9947,1088-6850},
   MRCLASS = {06A08 (05E99 52B99)},
  MRNUMBER = {1333388},
MRREVIEWER = {T.\ S.\ Blyth},
       DOI = {10.1090/S0002-9947-96-01534-6},
       URL = {https://doi.org/10.1090/S0002-9947-96-01534-6},
}

@article {Bergeron2012,
    AUTHOR = {Bergeron, Fran\c cois and Pr\'eville-Ratelle, Louis-Fran\c
              cois},
     TITLE = {Higher trivariate diagonal harmonics via generalized {T}amari
              posets},
   JOURNAL = {J. Comb.},
  FJOURNAL = {Journal of Combinatorics},
    VOLUME = {3},
      YEAR = {2012},
    NUMBER = {3},
     PAGES = {317--341},
      ISSN = {2156-3527,2150-959X},
   MRCLASS = {05E10 (05A19)},
  MRNUMBER = {3029440},
MRREVIEWER = {Anthony\ A.\ Mendes},
       DOI = {10.4310/JOC.2012.v3.n3.a4},
       URL = {https://doi.org/10.4310/JOC.2012.v3.n3.a4},
}

@article {Sanley1975,
    AUTHOR = {Stanley, Richard P.},
     TITLE = {The {F}ibonacci lattice},
   JOURNAL = {Fibonacci Quart.},
  FJOURNAL = {The Fibonacci Quarterly. Official Organ of the Fibonacci
              Association},
    VOLUME = {13},
      YEAR = {1975},
    NUMBER = {3},
     PAGES = {215--232},
      ISSN = {0015-0517},
   MRCLASS = {06A35},
  MRNUMBER = {387143},
MRREVIEWER = {Joel\ Berman},
}

@article {Tamari1962,
    AUTHOR = {Tamari, Dov},
     TITLE = {The algebra of bracketings and their enumeration},
   JOURNAL = {Nieuw Arch. Wisk. (3)},
  FJOURNAL = {Nieuw Archief voor Wiskunde. Derde Serie},
    VOLUME = {10},
      YEAR = {1962},
     PAGES = {131--146},
      ISSN = {0028-9825},
   MRCLASS = {17.10},
  MRNUMBER = {146227},
MRREVIEWER = {R.\ H.\ Oehmke},
}

@article {GN2010,
    AUTHOR = {Gr\"atzer, G. and Nation, J. B.},
     TITLE = {A new look at the {J}ordan-{H}\"older theorem for semimodular
              lattices},
   JOURNAL = {Algebra Universalis},
  FJOURNAL = {Algebra Universalis},
    VOLUME = {64},
      YEAR = {2010},
    NUMBER = {3-4},
     PAGES = {309--311},
      ISSN = {0002-5240,1420-8911},
   MRCLASS = {06C10 (06B05 06B10)},
  MRNUMBER = {2781081},
MRREVIEWER = {Ulrich\ Faigle},
       DOI = {10.1007/s00012-011-0104-9},
       URL = {https://doi.org/10.1007/s00012-011-0104-9},
}

@article{JJ1954,
author = {Jakubík, Ján},
journal = {Časopis pro pěstování matematiky},
language = {slo},
number = {2},
pages = {206-216},
publisher = {Mathematical Institute of the Czechoslovak Academy of Sciences},
title = {Relácie kongruentnosti a slabá projektívnosť vo sväzoch},
url = {http://eudml.org/doc/18999},
volume = {080},
year = {1955},
}

@article {Anders1997,
    AUTHOR = {Bj\"orner, Anders and Wachs, Michelle L.},
     TITLE = {Shellable nonpure complexes and posets. {II}},
   JOURNAL = {Trans. Amer. Math. Soc.},
  FJOURNAL = {Transactions of the American Mathematical Society},
    VOLUME = {349},
      YEAR = {1997},
    NUMBER = {10},
     PAGES = {3945--3975},
      ISSN = {0002-9947,1088-6850},
   MRCLASS = {06A08 (05E99)},
  MRNUMBER = {1401765},
MRREVIEWER = {Volkmar\ Welker},
       DOI = {10.1090/S0002-9947-97-01838-2},
       URL = {https://doi.org/10.1090/S0002-9947-97-01838-2},
}

@article {CCh2024,
    AUTHOR = {Ceballos, Cesar and Chenevi\`ere, Cl\'ement},
     TITLE = {On linear intervals in the alt {$\nu$}-{T}amari lattices},
   JOURNAL = {Comb. Theory},
  FJOURNAL = {Combinatorial Theory},
    VOLUME = {4},
      YEAR = {2024},
    NUMBER = {2},
     PAGES = {Paper No. 18, 31},
      ISSN = {2766-1334},
   MRCLASS = {06A07 (05A19 06B05)},
  MRNUMBER = {4807157},
MRREVIEWER = {Joel\ Berman},
}

@article {Provan1980,
    AUTHOR = {Provan, J. Scott and Billera, Louis J.},
     TITLE = {Decompositions of simplicial complexes related to diameters of
              convex polyhedra},
   JOURNAL = {Math. Oper. Res.},
  FJOURNAL = {Mathematics of Operations Research},
    VOLUME = {5},
      YEAR = {1980},
    NUMBER = {4},
     PAGES = {576--594},
      ISSN = {0364-765X,1526-5471},
   MRCLASS = {52A25 (90C05)},
  MRNUMBER = {593648},
MRREVIEWER = {J.\ Parida},
       DOI = {10.1287/moor.5.4.576},
       URL = {https://doi.org/10.1287/moor.5.4.576},
}

@article {Barnard2020,
    AUTHOR = {Barnard, Emily},
     TITLE = {The canonical join complex of the {T}amari lattice},
   JOURNAL = {J. Combin. Theory Ser. A},
  FJOURNAL = {Journal of Combinatorial Theory. Series A},
    VOLUME = {174},
      YEAR = {2020},
     PAGES = {105207, 30},
      ISSN = {0097-3165,1096-0899},
   MRCLASS = {05E45},
  MRNUMBER = {4082060},
       DOI = {10.1016/j.jcta.2019.105207},
       URL = {https://doi.org/10.1016/j.jcta.2019.105207},
}

@article {Muehle2021,
    AUTHOR = {M\"uhle, Henri},
     TITLE = {Noncrossing arc diagrams, {T}amari lattices, and parabolic
              quotients of the symmetric group},
   JOURNAL = {Ann. Comb.},
  FJOURNAL = {Annals of Combinatorics},
    VOLUME = {25},
      YEAR = {2021},
    NUMBER = {2},
     PAGES = {307--344},
      ISSN = {0218-0006,0219-3094},
   MRCLASS = {06B05 (05E16 06B10)},
  MRNUMBER = {4268292},
MRREVIEWER = {Konrad\ P.\ Pi\'oro},
       DOI = {10.1007/s00026-021-00532-9},
       URL = {https://doi.org/10.1007/s00026-021-00532-9},
}

@article {Muehle2023,
    AUTHOR = {M\"uhle, Henri},
     TITLE = {Meet-distributive lattices have the intersection property},
   JOURNAL = {Math. Bohem.},
  FJOURNAL = {Academy of Sciences of the Czech Republic. Mathematical
              Institute. Mathematica Bohemica},
    VOLUME = {148},
      YEAR = {2023},
    NUMBER = {1},
     PAGES = {95--104},
      ISSN = {0862-7959,2464-7136},
   MRCLASS = {06D75},
  MRNUMBER = {4536312},
MRREVIEWER = {Himadri\ Mukherjee},
}

@article {Adaricheva2003,
    AUTHOR = {Adaricheva, K. V. and Gorbunov, V. A. and Tumanov, V. I.},
     TITLE = {Join-semidistributive lattices and convex geometries},
   JOURNAL = {Adv. Math.},
  FJOURNAL = {Advances in Mathematics},
    VOLUME = {173},
      YEAR = {2003},
    NUMBER = {1},
     PAGES = {1--49},
      ISSN = {0001-8708,1090-2082},
   MRCLASS = {06B05 (06B15 08C15 51D99)},
  MRNUMBER = {1954454},
MRREVIEWER = {Ivo\ D\"untsch},
       DOI = {10.1016/S0001-8708(02)00011-7},
       URL = {https://doi.org/10.1016/S0001-8708(02)00011-7},
}

@book {Freese1995,
    AUTHOR = {Freese, Ralph and Je\v{z}ek, Jaroslav and Nation, James B.},
     TITLE = {Free lattices},
    SERIES = {Mathematical Surveys and Monographs},
    VOLUME = {42},
 PUBLISHER = {American Mathematical Society, Providence, RI},
      YEAR = {1995},
     PAGES = {viii+293},
      ISBN = {0-8218-0389-1},
   MRCLASS = {06B25 (06-02 06-04 06B20 68Q25)},
  MRNUMBER = {1319815},
MRREVIEWER = {T.\ S.\ Blyth},
       DOI = {10.1090/surv/042},
       URL = {https://doi.org/10.1090/surv/042},
}

@article {reading2015,
    AUTHOR = {Reading, Nathan},
     TITLE = {Noncrossing arc diagrams and canonical join representations},
   JOURNAL = {SIAM J. Discrete Math.},
  FJOURNAL = {SIAM Journal on Discrete Mathematics},
    VOLUME = {29},
      YEAR = {2015},
    NUMBER = {2},
     PAGES = {736--750},
      ISSN = {0895-4801,1095-7146},
   MRCLASS = {05A05 (05E15 06B10)},
  MRNUMBER = {3335492},
MRREVIEWER = {Paula\ M. Machado Cruz Catarino},
       DOI = {10.1137/140972391},
       URL = {https://doi.org/10.1137/140972391},
}

@article {barnard2019,
    AUTHOR = {Barnard, Emily},
     TITLE = {The canonical join complex},
   JOURNAL = {Electron. J. Combin.},
  FJOURNAL = {Electronic Journal of Combinatorics},
    VOLUME = {26},
      YEAR = {2019},
    NUMBER = {1},
     PAGES = {Paper No. 1.24, 25},
      ISSN = {1077-8926},
   MRCLASS = {05E45 (05E10 06A07)},
  MRNUMBER = {3919619},
MRREVIEWER = {Konrad\ P.\ Pi\'oro},
       DOI = {10.37236/7866},
       URL = {https://doi.org/10.37236/7866},
}

@article {preville_vTamari_2017,
    AUTHOR = {Pr\'eville-Ratelle, Louis-Fran\c and Viennot, Xavier},
     TITLE = {The enumeration of generalized {T}amari intervals},
   JOURNAL = {Trans. Amer. Math. Soc.},
  FJOURNAL = {Transactions of the American Mathematical Society},
    VOLUME = {369},
      YEAR = {2017},
    NUMBER = {7},
     PAGES = {5219--5239},
      ISSN = {0002-9947,1088-6850},
       DOI = {10.1090/tran/7004},
       URL = {https://doi.org/10.1090/tran/7004},
}

@article {ceballos_vTamari_subword_2020,
    AUTHOR = {Ceballos, Cesar and Padrol, Arnau and Sarmiento, Camilo},
     TITLE = {The {$\nu$}-{T}amari lattice via {$\nu$}-trees,
              {$\nu$}-bracket vectors, and subword complexes},
   JOURNAL = {Electron. J. Combin.},
  FJOURNAL = {Electronic Journal of Combinatorics},
    VOLUME = {27},
      YEAR = {2020},
    NUMBER = {1},
     PAGES = {Paper No. 1.14, 31},
      ISSN = {1077-8926},
       DOI = {10.37236/8000},
       URL = {https://doi.org/10.37236/8000},
}

@misc{rowmotion,
    AUTHOR={Ben Adenbaum and Emily Barnard and Cesar Ceballos and Clément Chenevière and Colin Defant and Sam Hopkins and Matthias Müller and Martin Rubey and Jessica Striker},
    TITLE={Invariance of rowmotion for variants of the Tamari lattice},
    YEAR={2026},
    HOWPUBLISHED={\arxiv{2609.12983}}
}

@article {tamcom,
    AUTHOR = {Ceballos, Cesar and Padrol, Arnau and Sarmiento, Camilo},
     TITLE = {Geometry of {$\nu$}-{T}amari lattices in types {$A$} and~{$B$}},
   JOURNAL = {Trans. Amer. Math. Soc.},
  FJOURNAL = {Transactions of the American Mathematical Society},
    VOLUME = {371},
      YEAR = {2019},
    NUMBER = {4},
     PAGES = {2575--2622},
      ISSN = {0002-9947,1088-6850},
   MRCLASS = {05E45 (05E10 14T05 52B22)},
  MRNUMBER = {3896090},
MRREVIEWER = {Evgeny\ Smirnov},
       DOI = {10.1090/tran/7405},
       URL = {https://doi.org/10.1090/tran/7405},
}

@unpublished{alt,  
  author = {Cesar Ceballos},
  title = {A canonical realization of the alt $\nu$-associahedron},
  year = {2024},
  note = {\href{https://arxiv.org/abs/2401.17204v1}{arXiv:2401.17204v1}}
}
%\bibliography{arxiv.bbl}
\end{document}